\newtheorem{theorem}{Theorem}
\newtheorem*{theorem*}{Theorem}
\newtheorem{lemma}{Lemma}
\newtheorem{proposition}{Proposition}
\theoremstyle{definition}
\newtheorem{definition}{Definition}
\theoremstyle{remark}
\numberwithin{equation}{section}
\title[On reducibility of induced representations]{On reducibility of induced representations of odd unitary groups: the depth zero case}
\author{Subha Sandeep Repaka}
\address{Department of Mathematics, SRM University- AP, Mangalagiri-Mandal, Neeru Konda, Amaravati, Andhra Pradesh 522240}
\email{subhasandeep.r@srmap.edu.in, sandeep.repaka@gmail.com}
\begin{document}
	
	\date{\today}	
	\setcounter{tocdepth}{1}
	\date{\today}
	\keywords{}
	\subjclass[2010]{Primary 22E50, Secondary 11F70}
	
	\begin{abstract}
		We study a problem concerning parabolic induction in certain $p$-adic unitary groups. More precisely, for $E/F$ a quadratic extension of $p$-adic fields the associated unitary group $G=\mathrm{U}(n,n+1)$ contains a parabolic subgroup $P$ with Levi component $L$ isomorphic to $\mathrm{GL}_n(E) \times \mathrm{U}_1(E)$. Let $\pi$ be an irreducible supercuspidal representation of $L$ of depth zero. We use Hecke algebra methods to determine when the parabolically induced representation $\iota_P^G \pi$ is reducible.
	\end{abstract}

	\maketitle
	\tableofcontents
	
\section{Introduction}

In this paper we solve a similar problem as the one which we did in \cite{sandeep}. In \cite{sandeep}, we solved the problem for $\mathrm{U}(n,n)$ over non- Archimedean local fields where as in this paper we are solving the same problem for $\mathrm{U}(n,n+1)$ over non- Archimedean local fields. Refer to the section 1 in \cite{sandeep} for a better understanding of what we are doing in this paper. All the representations in this paper are smooth and complex representations. \par

Let $G=\mathrm{U}(n,n+1)$ be the odd unitary group over non- Archimedean local field $E$ and $\pi$ is an irreducible supercuspidal  depth zero representation of the Siegel Levi component $L \cong \mathrm{GL}_n(E) \times \mathrm{U}_1(E)$ of the Siegel parabolic subgroup $P$ of $G$. The terms $P, L, \pi, \mathrm{U}(n,n+1)$ are described in much detail later in the paper. We use Hecke algebra methods to determine when the parabolically induced representation $\iota_P^G \pi$ is reducible. Harish-Chandra tells us to  look not at an individual $\iota_P^G \pi$ but at the family $\iota_P^G (\pi \nu)$ as $\nu$ varies through the unramfied characters of $L \cong {\rm GL}_n(E) \times \mathrm{U}_1(E)$. The unramified characters of $L$ and the functor $\iota_{P}^{G}$ are also described in greater detail later in the paper.\par  

Before going any further, let us describe how $\mathrm{U}(n,n+1)$ over  non-Archimedean local fields looks like. Let $E/F$ be a quadratic Galois extension of non-Archimedean local fields where char $F \neq 2$. Write $-$ for the non-trivial element of $\mathrm{Gal}(E/F)$. The group $G=\mathrm{U}(n,n+1)$ is given by

\[
\mathrm{U}(n,n+1)= \{g \in \mathrm{GL}_{2n+1}(E) \mid {^t}{\overline g}Jg=J\}
\]
for $J=
\begin{bmatrix}
0 &0 &Id_{n}\\
0& 1 & 0 \\
Id_{n} & 0 & 0
\end {bmatrix}$ where each block is of size $n$ and for $g=(g_{ij})$ we write $\overline{g}= (\overline{g}_{ij})$. We write $\mathfrak{O}_E$ and $\mathfrak{O}_F$ for the ring of integers in $E$ and $F$ respectively. Similarly, $\mathbf{p}_E$ and $\mathbf{p}_F$ denote the maximal ideals in $\mathfrak{O}_E$ and $\mathfrak{O}_F$ and $k_E=\mathfrak{O}_E/\mathbf{p}_E$ and $k_F=\mathfrak{O}_F/\mathbf{p}_F$ denote the residue class fields of $\mathfrak{O}_E$ and $\mathfrak{O}_F$. Let $\left|k_F\right|= q =p^r$ for some odd prime $p$ and some integer $r \geqslant 1$.\par  

There are two kinds of extensions of $E$ over $F$. One is the unramified extension and the other one is the ramified extension. In the unramified case, we can choose uniformizers $\varpi_E,\varpi_F$ in $E, F$ such that $\varpi_E=\varpi_F$ so that we have $[k_E:k_F]=2 , \mathrm{Gal}(k_E/k_F) \cong \mathrm{Gal}(E/F)$. As $\varpi_E=\varpi_F$, so $\overline{\varpi}_E=\varpi_E$  since  $\varpi_F \in F$. As $k_F=\mathbb{F}_q$,  so $k_E= \mathbb{F}_{q^2}$ in this case. In the ramified case, we can choose uniformizers $\varpi_E,\varpi_F$ in $E, F$ such that $\varpi_E^2=\varpi_F$ so that we have $[k_E:k_F]=1,\mathrm{Gal}(k_E/k_F)=1$. As $\varpi_E^2=\varpi_F$, we can further choose $\varpi_E$ such that $\overline{\varpi}_E= -\varpi_E$. As $k_F=\mathbb{F}_q$, so $k_E= \mathbb{F}_{q}$ in this case. \par

We write $P$  for the Siegel parabolic subgroup of $G$. Write $L$ for the Siegel Levi component of $P$ and $U$ for the unipotent radical of $P$. Thus $P= L \ltimes U$ with  

\[
L= \Bigg\lbrace \begin{bmatrix}
a & 0 & 0\\
0 &\lambda &0\\
0 &0 &{^t}{\overline {a}}{^{-1}}
\end{bmatrix} \mid a \in \mathrm{GL}_n(E), \lambda \in E^{\times}, \lambda\overline{\lambda}=1 \Bigg \rbrace
\]

and

\[
U= \Bigg \lbrace \begin{bmatrix}
Id_n &u &X \\
0 &1 &-{}^t\overline{u}\\
0 &0 &Id_n
\end{bmatrix} \mid  X \in \mathrm{M}_n(E), u \in \mathrm{M}_{n \times 1}(E), X + {^t}{\overline{X}+u{}^t\overline{u}}=0 \Bigg \rbrace.
\]

Note that $L \cong \mathrm{GL}_n(E) \times \mathrm{U}_1(E)$ and $\mathrm{U}_1(E) \cong \mathrm{U}_1(\mathfrak{O}_E)$. Let $\overline P= L \ltimes \overline U$ be the $L$-opposite of $P$ where 
\[
\overline{U}= \Bigg \lbrace \begin{bmatrix}
Id_n &0 &0 \\
-{}^t\overline{u} &1 &0\\
X &u &Id_n
\end{bmatrix} \mid  X \in \mathrm{M}_n(E), u \in \mathrm{M}_{n \times 1}(E), X + {^t}{\overline{X}+u{}^t\overline{u}}=0 \Bigg \rbrace.
\]

Let $K_0=\mathrm{GL}_n(\mathfrak{O}_E)$ and $K_1=Id_n+\varpi_E \mathrm{M}_n(\mathfrak{O}_E)$. Note $K_1=Id_n+\varpi_E \mathrm{M}_n(\mathfrak{O}_E)$ is the kernel of the surjective group homomorphism \[(g_{ij}) \longrightarrow (g_{ij} + \mathbf{p_E}) \colon \mathrm{GL}_n(\mathfrak{O}_E) \longrightarrow \mathrm{GL}_n(k_E)\] \par

As $\pi$ is a depth zero representation of $L \cong \mathrm{GL}_n(E) \times \mathrm{U}_1(E)$. So $\pi = \lambda \chi$ where $\lambda$ is a depth zero representation of $\mathrm{GL}_n(E)$ and $\chi$ is a depth zero character of $\mathrm{U}_1(E)$. We say $\pi$ is a depth zero representation of the Siegel Levi component $L$ of $P$ if $\lambda^{K_1} \neq 0$ and $\chi|_{\mathrm{U}_1(1+p_E)}=1$.\par

 Let $(\rho, V)$ be a smooth representation of the group $H$ which is a subgroup of $K$. The smoothly induced representation from $H$ to $K$ is denoted by $Ind_H^K(\rho, V)$ or $Ind_H^K(\rho)$. Let us denote $c$-$Ind_H^K(\rho, V)$ or  $c$-$Ind_P^G(\rho)$ for smoothly induced compact induced representation from $H$ to $K$.\par

The normalized induced representation from $P$ to $G$ is denoted by $\iota_P^G(\rho, V)$ or $\iota_P^G(\rho)$ where $\iota_P^G(\rho)=Ind_P^G(\rho \otimes \delta_P^{1/2})$, $\delta_P$ is a character of $P$ defined as $\delta_P(p)=\|det(Ad \, p)|_{\mathrm{Lie} \, U}\|_F$ for $p \in P$ and $\mathrm{Lie} \, U$ is the Lie-algebra of $U$. We work with normalized induced representations rather than induced representations in this paper as results look more appealing as for example it commutes with taking duals.\par 

Write $L^{\circ}$ for the smallest subgroup of $L$ containing the compact open subgroups of $L$. We say a character $\nu \colon L \longrightarrow \mathbb{C}^{\times}$ is unramified if $\nu|_{L^{\circ}}=1$. Observe that if $\nu$ is an unramified character of $L$ then $\nu= \nu^{'}\beta$ where $\nu^{'}$ is an unramified character of $\mathrm{GL}_n(E)$ and $\beta$ is an unramified character of $\mathrm{U}_1(E)$. But as $\mathrm{U}_1(E)=\mathrm{U}_1(\mathfrak{O}_E)$, so $\beta$ is trivial. Hence, $\nu$ can be viewed as an unramified character of $\mathrm{GL}_n(E)$. Let the group of unramified characters of $L$ be denoted by $\mathrm{X}_{nr}(L)$.\par
 
\subsection{Question}
The question we answer in this paper is, given $\pi$ an irreducible supercuspidal representation of $L$ of depth zero, we look at the family of representations $\iota_P^G(\pi\nu)$  for $\nu \in \chi_{nr}(L)$ and we want to determine the set of such $\nu$ for which this induced representation is reducible for both ramified and unramified extensions. By general theory, this is a finite set.\par

Recall that $\pi= \lambda \chi$ where $\lambda$ is an irreducible supercuspidal depth zero representation of $\mathrm{GL}_n(E)$ and $\chi$ is a supercuspidal depthzero character of $\mathrm{U}_1(E)$. Now $\lambda|_{K_0}$ contains an irreducible representation $\tau$ of $K_0$ such that $\tau|_{K_1}$ is trivial. So $\tau$ can be viewed as an irreducible representation of $K_0/K_1\cong \mathrm{GL}_n(k_E)$  inflated to $K_0=\mathrm{GL}_n(\mathfrak{O}_E)$. The representation $\tau$ is cuspidal by (a very special case of) A.1 Appendix \cite{MR1235019}. Set $\rho_0= \tau \chi$ which is a cuspidal representation of $K_0 \times \mathrm{U}_1(\mathfrak{O}_E)$. Further, we can view $\rho_0=\tau \chi$ as a cuspidal representation of $\mathrm{GL}_n(k_E) \times \mathrm{U}_1(k_E)$ inflated to $K_0 \times \mathrm{U}_1(\mathfrak{O}_E)$.\par

By the work of Green \cite{green} or as a very special case of the Deligne-Lusztig construction, irreducible cuspidal representations of $\mathrm{GL}_n(k_E)$ are parametrized by the regular characters of degree $n$ extensions of $k_E$. We write $\tau_\theta$ for the irreducible cuspidal representation $\tau$ that corresponds to a regular character $\theta$. Let $l/k_E$ be a field extension of degree $n$. We set $\Gamma = {\rm Gal}(l/k_E)$.

Let 
\[
(l^\times)^\vee = {\rm Hom}(l^\times, \mathbb{C}^\times).
\]
Clearly, $\Gamma$ acts on $(l^\times)^\vee$ via
\[
\theta^\gamma (x) = \theta({}^\gamma x), \quad \theta \in (l^\times)^\vee, \,\,\gamma \in \Gamma, \,\,x \in l^\times.
\]     
We write $(l^\times)^\vee_{{\rm reg}}$ for the group of regular characters of $l^\times$ with respect to this action, that is, 
characters $\theta$ such that ${\rm Stab}_\Gamma (\theta)= \{ 1 \}$. 
We also write $l^\times_{\rm reg}$ for the regular elements in $l^\times$, that is, elements $x$ such that 
${\rm Stab}_\Gamma (x)= \{ 1 \}$. 
The set of $\Gamma$-orbits on  
$(l^\times)^\vee_{{\rm reg}}$ is then in canonical bijection with the set ${\rm Irr}_{\rm cusp} \, {\rm GL}_n(k_E)$ of equivalence 
classes of irreducible cuspidal representations of ${\rm GL}_n(k_E)$:
\begin{align*} 
\Gamma \backslash (l^\times)^\vee_{{\rm reg}}  \,\,\,     
&\longleftrightarrow \,\,\,  {\rm Irr}_{\rm cusp} \, {\rm GL}_n(k_E)  \\
\theta &\longleftrightarrow \tau_\theta.
\end{align*} 
The bijection is specified by a character relation         
\[
\tau_\theta (x) = c  \sum_{\gamma \in \Gamma}    \theta^\gamma(x), \quad x \in l^\times_{\rm reg}, 
\]
for a certain constant $c$ that is independent of $\theta$ and $x$. We denote $\tau$ by $\tau_{\theta}$.\par

Note that we have $k_E=\mathbb{F}_{q^2}$. So $l= \mathbb{F}_{q^{2n}}$.\par

As $\Gamma=\mathrm{Gal}(l/k_E)$, $\Gamma$ is generated by the Frobenius map $\Phi$ given by $\Phi(\lambda)=\lambda^{q^2}$ for $\lambda \in l$. Note that here $\theta^{\Phi}= \theta^{q^2}$. Also observe that $\Phi^n(\lambda)= \lambda^{q^{2n}}=\lambda$ (since $l^{\times}$ is a cyclic group of order $q^{2n}-1$) $\Longrightarrow \Phi^n= 1$. \par

Note that for two regular characters $\theta$ and $\theta'$ we have $\tau_{\theta}\simeq\tau_{\theta'} \Longleftrightarrow$ there exists $\gamma \in \Gamma$ such that $\theta^{\gamma}= \theta'$. \par

We now define the Siegel parahoric subgroup $\mathfrak{P}$ of $G$ which is given by:

\begin{center}
	$\mathfrak{P} =
	\begin{bmatrix}
	\mathrm{GL}_n(\mathfrak{O}_E)    & \mathrm{M}_{n \times 1}(\mathfrak{O}_E) & \mathrm{M}_n(\mathfrak{O}_E) \\
	\mathrm{M}_{1 \times n}(\mathbf{p}_E) & \mathrm{U}_1(\mathfrak{O}_E) & \mathrm{M}_{1 \times n}(\mathfrak{O}_E)\\
	\mathrm{M}_n(\mathbf{p}_E)  & \mathrm{M}_{n \times 1}(\mathbf{p}_E) & \mathrm{GL}_n(\mathfrak{O}_E) 
	\end{bmatrix}
	\bigcap \mathrm{U}(n,n+1).$\\
\end {center}

We have $\mathfrak{P}= (\mathfrak{P}\cap\overline{U})(\mathfrak{P}\cap L)(\mathfrak{P}\cap U) $(Iwahori factorization of $\mathfrak{P}$). Let us denote $(\mathfrak{P}\cap\overline{U})$ by $\mathfrak{P}_{-}$, $(\mathfrak{P}\cap U)$ by $\mathfrak{P}_{+}$, $(\mathfrak{P}\cap L)$ by $\mathfrak{P}_{0}$. Thus

	\begin{center}
		$ \mathfrak{P}_{0}=
		\Bigg\lbrace
		\begin{bmatrix}
		a & 0 &0\\
		0& \lambda & 0\\ 
		0 & 0&{^t}{\overline{a}}{^{-1}}
		\end{bmatrix}\mid a \in \mathrm{GL}_n(\mathfrak{O}_E), \lambda \in \mathfrak{O}_E^{\times}, \lambda\overline{\lambda}=1 
		\Bigg \rbrace$,
		\end {center}
		
	\begin{center}
		$\mathfrak{P}_{+}= \Bigg \lbrace \begin{bmatrix}
			Id_n &u &X \\
			0 &1 &-{}^t\overline{u}\\
			0 &0 &Id_n
		\end{bmatrix} \mid  X \in \mathrm{M}_n(\mathfrak{O}_E), u \in \mathrm{M}_{n \times 1}(\mathfrak{O}_E), X + {^t}{\overline{X}+u{}^t\overline{u}}=0 \Bigg \rbrace,$
		\end{center}
			
    \begin{center}
    $\mathfrak{P}_{-}= \Bigg \lbrace \begin{bmatrix}
	Id_n &0 &0 \\
	-{}^t\overline{u} &1 &0\\
	X &u &Id_n
	\end{bmatrix} \mid  X \in \mathrm{M}_n(\mathbf{p}_E), u \in \mathrm{M}_{n \times 1}(\mathbf{p}_E), X + {^t}{\overline{X}+u{}^t\overline{u}}=0 \Bigg \rbrace.$
	\end{center}

By Iwahori factorization of $\mathfrak{P}$ we have $\mathfrak{P}= (\mathfrak{P}\cap\overline{U})(\mathfrak{P}\cap L)(\mathfrak{P}\cap U)=\mathfrak{P}_{-}\mathfrak{P}_{0}\mathfrak{P}_{+}$. As $\rho_0$ is a representation of $K_0 \times \mathrm{U}_1(\mathfrak{O}_E)$, it can also be viewed as a representation of $\mathfrak{P}_{0}$. This is because $\mathfrak{P}_{0}\cong K_0 \times \mathrm{U}_1(\mathfrak{O}_E)$.\par

Let $Z(L)$ denote the center of $L$. Hence 

\[
Z(L)=\Bigg\lbrace
\begin{bmatrix}
aId_n  & 0 &0\\
0 & \lambda &0\\
0    & 0 & \overline{a}^{-1}Id_n \\
\end{bmatrix} \mid a \in E^{\times}, \lambda \in E^{\times},  \lambda\overline{\lambda}=1 \Bigg \rbrace.
\]\par

Let us set 

\[
\zeta=
\begin{bmatrix}
\varpi_E Id_n & 0 &0\\
0 &1 &0\\
0 &0 & {\overline\varpi_E}^{-1}Id_n
\end{bmatrix}.\]\par

Note that  $Z(L)\mathfrak{P}_0= \coprod_{n \in \mathbb{Z}} \mathfrak{P}_0 \zeta^n$, so we can extend $\rho_0$ to a representation $\widetilde{\rho_0}$ of $Z(L)\mathfrak{P}_0$ via $\widetilde{\rho_0}(\zeta^k j)=\rho_0(j)$ for $j \in \mathfrak{P}_0, k \in \mathbb{Z}$. By standard Mackey theory arguments, we show in the paper that $\pi$= $c$-$Ind_{Z(L)\mathfrak{P}_0}^L \widetilde{\rho_0}$ is a smooth irreducible supercuspidal depth zero representation of $L$. Also note that any arbitrary depth zero irreducible supercuspidal representation of $L$ is an unramified twist of $\pi$. To that end, we will answer the question which we posed earlier in this paper and prove the following result.

\begin{theorem} \label{the_1}
	Let $G= \mathrm{U}(n,n+1)$. Let $P$ be the Siegel parabolic subgroup of $G$ and $L$ be the Siegel Levi component of $P$. Let $\pi$= $c$-$Ind_{Z(L)\mathfrak{P}_0}^L \widetilde{\rho_0}$ be a smooth irreducible supercuspidal depth zero representation of $L \cong \mathrm{GL}_n(E) \times \mathrm{U}_1(E)$ where $\widetilde{\rho_0}(\zeta^k j)=\rho_0(j)$ for $j \in \mathfrak{P}_0, k \in \mathbb{Z}$ and  $\rho_0=\tau_{\theta}$ for some regular character $\theta$ of $l^{\times}$ with $[l:k_E]=n$ and $|k_F|= q$. Consider the family $\iota_P^G(\pi\nu)$ for $\nu \in \mathrm{X}_{nr}(L)$.
	\begin{enumerate}
	\item For $E/F$ unramified, $\iota_P^G(\pi \nu)$ is reducible $\Longleftrightarrow n$ is odd, $\theta^{q^{n+1}}= \theta^{-q}$ and $\nu(\zeta) \in \{q^{n},q^{-n},-1\}$.
	\item For $E/F$ ramified, $\iota_P^G(\pi \nu)$ is reducible $\Longleftrightarrow n$ is even, $\theta^{q^{n/2}}=\theta^{-1}$ and $\nu(\zeta) \in \{q^{n/2},q^{-n/2},-1\}$.
\end{enumerate}
\end{theorem}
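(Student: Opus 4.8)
The plan is to pass to the theory of types and Bushnell--Kutzko covers, convert the reducibility question into a statement about modules over a small Hecke algebra, and then compute that algebra. First I would show that the pair $(\mathfrak{P},\rho)$ is a $G$-cover of the $L$-type $(\mathfrak{P}_0,\rho_0)$, where $\rho$ denotes the inflation of $\rho_0$ to $\mathfrak{P}$ along the Iwahori factorization $\mathfrak{P}=\mathfrak{P}_-\mathfrak{P}_0\mathfrak{P}_+$ (so that $\rho$ is trivial on $\mathfrak{P}_\pm$ and agrees with $\rho_0$ on $\mathfrak{P}_0$). This amounts to checking the Iwahori decomposition of $\mathfrak{P}$ relative to $(L,P)$ and to $(L,\overline P)$, the triviality of $\rho$ on $\mathfrak{P}\cap U$ and on $\mathfrak{P}\cap\overline U$, and the existence of a strongly $(P,\mathfrak{P})$-positive element that becomes invertible in $\mathcal{H}(G,\rho)$; here $\zeta$ (or a small power of it) serves, exactly as in the $\mathrm{U}(n,n)$ case of \cite{sandeep}. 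Granting this, the cover formalism supplies an equivalence between the Bernstein component of $G$ attached to $\pi$ and the category of $\mathcal{H}(G,\rho)$-modules under which $\mathcal{H}(L,\rho_0)\cong\mathbb{C}[X^{\pm 1}]$ (supported on $Z(L)\mathfrak{P}_0=\coprod_n\mathfrak{P}_0\zeta^n$, with $X$ attached to $\zeta$), every unramified twist $\pi\nu$ stays in the component, and $\iota_P^G(\pi\nu)$ corresponds to the principal-series module $\mathrm{Ind}_{\mathcal{H}(L,\rho_0)}^{\mathcal{H}(G,\rho)}(\mathrm{ev}_{\nu(\zeta)})$; in particular $\iota_P^G(\pi\nu)$ is reducible precisely when that module is.

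Next I would determine $\mathcal{H}(G,\rho)$ as an abstract algebra. Its support is governed by $N_G(L)/L$, which for the Siegel Levi of $\mathrm{U}(n,n+1)$ is cyclic of order two with non-trivial class represented by an element $w_0$ acting on $L\cong\mathrm{GL}_n(E)\times\mathrm{U}_1(E)$ by $(a,\lambda)\mapsto({}^t\overline{a}^{-1},\lambda)$, together with the rank-one lattice generated by $\zeta$. Thus $\mathcal{H}(G,\rho)$ always contains $\mathbb{C}[X^{\pm 1}]$, and it is generated over this subalgebra by at most one further element $T$ supported on $\mathfrak{P}w_0\mathfrak{P}$. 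That generator is present exactly when the finite intertwining space $\mathrm{Hom}_{\mathfrak{P}_0\cap{}^{w_0}\mathfrak{P}_0}({}^{w_0}\rho_0,\rho_0)$ is non-zero, which I would analyse over $\mathrm{GL}_n(k_E)\times\mathrm{U}_1(k_E)$. Using that $w_0$ carries $\tau_\theta$ to $\tau_{\theta^{-1}}$ twisted by the residual action of $\mathrm{Gal}(k_E/k_F)$ (the $q$-power map when $E/F$ is unramified, the identity when $E/F$ is ramified), together with $\tau_\theta\simeq\tau_{\theta'}\Leftrightarrow\theta'\in\Gamma\!\cdot\!\theta$, the non-vanishing reduces to a divisibility condition on $\mathrm{ord}\,\theta$ ($\mathrm{ord}\,\theta\mid q^{2j-1}+1$ in the unramified case, $\mathrm{ord}\,\theta\mid q^{j}+1$ in the ramified case, for some $j$); an elementary order computation then shows that a $\Gamma$-regular $\theta$ can satisfy it only when $n$ is odd, respectively even, and that in those cases the condition is equivalent to $\theta^{q^{n+1}}=\theta^{-q}$, respectively $\theta^{q^{n/2}}=\theta^{-1}$. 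When these fail, $\mathcal{H}(G,\rho)=\mathbb{C}[X^{\pm 1}]$ is commutative, every principal-series module is one-dimensional and irreducible, hence so is every $\iota_P^G(\pi\nu)$; this gives the ``only if'' halves of both statements.

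When the extra generator $T$ is present, I would pin down the quadratic relation it satisfies and its commutation with $X$, thereby identifying $\mathcal{H}(G,\rho)$ with a rank-one affine Hecke algebra whose two ``wall'' parameters are powers of $q$ read off from indices of the form $[\mathfrak{P}w_0\mathfrak{P}:\mathfrak{P}]$ and from the finite Hecke algebra of the relevant reductive quotient. One of these parameters turns out to be trivial — this reflects the isolated middle block of $J$ and the $\mathrm{U}_1$-factor — which is exactly what makes $\nu(\zeta)=-1$ a reducibility point, alongside the two points $\nu(\zeta)=q^{\pm n}$ (unramified), respectively $\nu(\zeta)=q^{\pm n/2}$ (ramified), coming from the non-trivial parameter. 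The standard classification of principal-series modules of such an algebra then yields reducibility precisely for $\nu(\zeta)\in\{q^n,q^{-n},-1\}$, respectively $\nu(\zeta)\in\{q^{n/2},q^{-n/2},-1\}$, completing both implications.

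The main obstacle I anticipate is the quantitative part of the last step: tracking the modulus character $\delta_P^{1/2}$, the volumes of the double cosets $\mathfrak{P}w_0\mathfrak{P}$ and $\mathfrak{P}_0\zeta\mathfrak{P}_0$, and the precise power of $q$ in the quadratic relation, so that the unramified/ramified dichotomy comes out as the exponent $n$ versus $n/2$ and not a neighbouring power — the qualitative skeleton (cover plus finite intertwining) being, by contrast, a fairly direct adaptation of the argument in \cite{sandeep}, with the genuinely new input being the role of the $\mathrm{U}_1$-factor in the parity flip and in the extra reducibility point $\nu(\zeta)=-1$.
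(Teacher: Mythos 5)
Your overall strategy — build the $G$-cover $(\mathfrak{P},\rho)$ of $(\mathfrak{P}_0,\rho_0)$, pass to Hecke modules, identify $\mathcal{H}(L,\rho_0)\cong\mathbb{C}[X^{\pm1}]$, detect the extra intertwining by an analysis over the residue field, and then read off the reducibility points from the module theory of a rank-one affine Hecke algebra — is the same as the paper's, and most of your intermediate claims (the divisibility conditions on $\mathrm{ord}\,\theta$, the parity constraints on $n$, the collapse to the Laurent algebra when the finite intertwining vanishes) match or are consistent with what the paper does.

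However, there is a concrete error in the quantitative final step. You assert that in the reducible case the rank-one affine Hecke algebra has \emph{one trivial wall parameter}, and that this is what produces the reducibility point $\nu(\zeta)=-1$. This is not what happens, and if carried through it gives the wrong answer. The paper shows that \emph{both} generators $\phi_0$ (supported on $\mathfrak{P}w_0\mathfrak{P}$) and $\phi_1$ (supported on $\mathfrak{P}w_1\mathfrak{P}$) satisfy the \emph{same} quadratic relation: $\phi_i^2=q^n+(q^n-1)\phi_i$ in the unramified case and $\phi_i^2=q^{n/2}+(q^{n/2}-1)\phi_i$ in the ramified case, with both parameters computed via $|c_{\mathsf{G}}(s)|_p$ for a regular semisimple $s$ corresponding to $\theta$, and the second via conjugating by $\eta$ into an isomorphic finite Hecke algebra. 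In the notation of \cite{MR2531913} this means $\gamma_1=\gamma_2$ (equal to $q^{n/2}$, resp.\ $q^{n/4}$), and the set of bad points $\Gamma=\{\gamma_1\gamma_2,-\gamma_1\gamma_2^{-1},-\gamma_1^{-1}\gamma_2,(\gamma_1\gamma_2)^{-1}\}$ collapses to a three-element set $\{\gamma_1^2,-1,\gamma_1^{-2}\}$ precisely because of this \emph{equality}. If instead one parameter were trivial, say $\gamma_2=1$, you would obtain $\Gamma=\{\gamma_1,-\gamma_1,-\gamma_1^{-1},\gamma_1^{-1}\}$: four distinct points, none equal to $-1$, and not $\{q^{\pm n},-1\}$. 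So the $-1$ reducibility point is a symptom of the two parameters agreeing, not of one degenerating; your heuristic attribution of the $-1$ point to the $\mathrm{U}_1$-factor and the middle block of $J$ points you in exactly the wrong direction and would need to be replaced by the centralizer computation (Lemmas \ref{semidirect} and \ref{cardinality} in the paper, using Theorems 8.4.8--8.4.9 of \cite{carter_1992}) showing $\lambda=q^n$, resp.\ $q^{n/2}$, for \emph{both} generators.
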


In \cite{MR1266747}, Goldberg computes the reducibility points of $\iota_{P}^{G}(\pi)$ by computing the poles of certain $L$-functions attached to the representations of $\mathrm{GL}_n(E)$. Note however that the base field $F$ is assumed to be of characteristic $0$ in \cite{MR1266747}, whereas we assumed characteristic of $F \neq 2$. In \cite{MR1266747} no restriction on the depth of the representation $\pi$ is there, while in this paper we have assumed depth of the representation $\pi$ to be of zero. The final results obtained in \cite{MR1266747} are in terms of matrix coefficents of $\pi$ whereas our results are in terms of the unramified characters of $L$.\par

In \cite{heiermann_2011} and \cite{heiermann_2017}, Heiermann computed the structure of the Hecke algebras which we look at and makes a connection with Langlands parameters. But his results are not explicit. They do not give the precise values of the parameters in the relevant Hecke algebras.\par

In \cite{Stevens}, Stevens and Lust have calculated the parameters of the affine Hecke algebras for all the classical groups, so in particular they have also calculated the parameters of affine Hecke algebras of odd unitary groups in the depth zero setting for both ramified and unramified extensions. However, the approach taken by them is quite different from our approach.

$\mathbf{Acknowledgments}$: The author wishes to thank Alan Roche from University of Oklahoma, USA for suggesting the problem studied in this work and for many discussions and insights.

\section{Preliminaries}
\subsection{Bernstein Decomposition}
Let $G$ be the $F$-rational points of a reductive algebraic group defined over a non-Archimedean local field $F$. Let $(\pi, V)$ be an irreducible smooth representation of $G$. According to Theorem 3.3 in \cite{MR1486141}, there exists unique conjugacy class of cuspidal pairs $(L, \sigma)$ with the property that $\pi$ is isomorphic to a composition factor of $\iota_P^G\sigma$ for some parabolic subgroup $P$ of $G$. We call this conjugacy class of cuspidal pairs, the cuspidal support of $(\pi, V)$.\par

Given two cuspidal supports $(L_1, \sigma_1)$ and $(L_2, \sigma_2)$ of $(\pi, V)$, we say they are inertially equivalent if there exists $g \in G$ and $\chi \in \mathrm{X}_{nr}(L_2)$ such that $L_2= L_1^g$ and $\sigma_1^g\simeq \sigma_2\otimes\chi$. We write $[L, \sigma]_G$ for the inertial equivalence class or inertial support of $(\pi, V)$. Let $\mathfrak{B}(G)$ denote the set of inertial equivalence classes $[L, \sigma]_G$.\par

Let $\mathfrak{R}(G)$ denote the category of smooth representations of $G$. Let ${\mathfrak{R}}^s(G)$ be the full sub-category of smooth representations of $G$ with the property that $(\pi, V) \in ob({\mathfrak{R}}^s(G))\Longleftrightarrow$  every irreducible sub-quotient of $\pi$ has inertial support $s=[L, \sigma]_G$.\par

We can now state the Bernstein decomposition:

\[\mathfrak{R}(G)=\prod_{s \in \mathfrak{B}(G)}{\mathfrak{R}}^s(G).\]\par

\subsection{Types}

Let $G$ be the $F$-rational points of a reductive algebraic group defined over a non-Archimedean local field $F$. Let $K$ be a compact open subgroup of $G$. Let $(\rho, W)$ be an irreducible smooth representation of $K$ and $(\pi, V)$  be a  smooth representation of $G$. Let $V^{\rho}$ be the $\rho$-isotopic subspace of $V$.

\[
V^{\rho}= \sum\limits_{W'}W'
\] where the sum is over all $W'$ such that $(\pi|_K, W') \simeq (\rho, W)$.\par

Let $\mathcal{H}(G)$ be the space of all locally constant compactly supported functions $f \colon G \to \mathbb{C}$. This is a $\mathbb{C}$- algebra under convolution $*$. So for elements $f,g \in \mathcal{H}(G)$ we have 

\[(f * g)(x)= \int_G f(y)g(y^{-1}x)d\mu(y).\]

Here we have fixed a Haar measure $\mu$ on $G$. Let $(\pi, V)$ be a representation of $G$. Then  $\mathcal{H}(G)$ acts on $V$ via

\[hv=\int_G h(x)\pi(x)vd\mu(x)\] for $h \in \mathcal{H}(G), v \in V$. Let $e_\rho$ be the element in $\mathcal{H}(G)$ with support $K$ such that

\begin{center}
	$e_\rho(x)= \frac{\text{dim}\rho}{\mu(K)}tr_W(\rho(x^{-1})), x \in K.$\\
\end{center}

We have $e_\rho * e_\rho = e_\rho $ and $e_\rho V = V^{\rho}$ for any smooth representation $(\pi, V)$ of $G$. Let $\mathfrak{R}_\rho(G)$ be the full sub-category of $\mathfrak{R}(G)$ consisting of all representations $(\pi, V)$ where $V$ is generated by $\rho$-isotopic vectors. So $(\pi, V) \in \mathfrak{R}_\rho(G) \Longleftrightarrow  V = \mathcal{H}(G) * e_\rho V$. We now state the definition of a type.

\begin{definition}
	Let $ s \in \mathfrak{B}(G)$. We say that $(K, \rho)$ is an $s$-type in $G$ if $\mathfrak{R}_\rho(G)=
	\mathfrak{R}^s(G)$.
\end{definition}

\subsection{Hecke algebras}

Let $G$ be the $F$-rational points of a reductive algebraic group defined over a non-Archimedean local field $F$. Let $K$ be a compact open subgroup of $G$. Let $(\rho, W)$ be an irreducible smooth representation of $K$. Here we introduce the Hecke algebra $\mathcal{H}(G,\rho)$.

\[
\mathcal{H}(G,\rho)= \left\lbrace f \colon G \to End_{\mathbb{C}}(\rho^{\vee}) \; \middle|  \;
\begin{varwidth}{\linewidth}
supp($f$) is compact and \\
$f(k_1gk_2)= \rho^{\vee}(k_1)f(g)\rho^{\vee}(k_2)$\\
where $k_1,k_2 \in K, g \in G$
\end{varwidth}
\right \rbrace.
\]\par

Then $\mathcal{H}(G,\rho)$ is a $\mathbb{C}$-algebra with multiplication given by convolution $*$ with respect to some fixed Haar measure $\mu$ on $G$. So for elements $f,g \in \mathcal{H}(G, \rho)$ we have 
\[(f * g)(x)= \int_G f(y)g(y^{-1}x)d\mu(y).\] \par

The importance of types is seen from the following result. Let $\pi$ be a smooth representation in $\mathfrak{R}^{s}(G)$. Let $\mathcal{H}(G, \rho)- Mod$ denote the category of $\mathcal{H}(G, \rho)$-modules. If $(K, \rho)$ is an $s$-type then $ m_G \colon \mathfrak{R}^{s}(G) \longrightarrow \mathcal{H}(G, \rho)- Mod$  given by $m_G(\pi)= \mathrm{Hom}_K(\rho,\pi)$ is an equivalence of categories.\par

\subsection{Covers}

Let $G$ be the $F$-rational points of a reductive algebraic group defined over a non-Archimedean local field $F$. Let $K$ be a compact open subgroup of $G$. Let $(\rho, W)$ be an irreducible representation of $K$. Then we say $(K, \rho)$ is decomposed with respect to $(L,P)$ if the following hold:

\begin{enumerate}
	\item $K=(K\cap \overline U)( K \cap L)(K \cap U)$.
	\item $(K\cap \overline U),(K \cap U) \leqslant \text{ker}\rho$.
\end{enumerate}

Suppose $(K, \rho)$ is decomposed with respect  to $(L,P)$. We set $K_L=K \cap L$ and $\rho_L= \rho|_{K_L}$. We say an element $g \in G$ intertwines $\rho$ if $\mathrm{Hom}_{K^g \cap K} (\rho^g, \rho) \neq 0 $. Let $\mathfrak{I}_G(\rho)= \{ x \in G \mid x \,\text{intertwines} \, \rho \}$. We have the Hecke algebras $\mathcal{H}(G,\rho)$ and $\mathcal{H}(L,\rho_L)$. We write

\[
\mathcal{H}(G, \rho)_L =\{f \in \mathcal{H}(G, \rho) \mid \text{supp} (f) \subseteq KLK \}. 
\]\par

We recall some results and constructions from pages 606-612 in \cite{MR1643417}. These allow us to transfer questions about parabolic induction into questions concerning the module theory of appropriate Hecke algebras. 

\begin{proposition}[Bushnell and Kutzko, Proposition 6.3  \cite{MR1643417}]\label{pro_1}
Let $(K, \rho)$ decompose with respect to $(L,P)$ .Then

\begin{enumerate}
	\item $\rho_L$ is irreducible.
	\item $\mathfrak{I}_L(\rho_L)= \mathfrak{I}_G(\rho) \cap L$.
	\item There is an embedding  $T \colon \mathcal{H}(L, \rho_L) \longrightarrow \mathcal{H}(G, \rho)$ such that if $f \in \mathcal{H}(L, \rho_L)$ has support $K_LzK_L$ for some $ z\in L$, then $T(f)$ has support $KzK$.
	\item The map $T$ induces an isomorphism of vector spaces:
	
	\begin{center}
		$\mathcal{H}(L, \rho_L) \xrightarrow{\simeq} \mathcal{H}(G, \rho)_L.$\\
	\end{center}
	
	\end {enumerate}
	
\end{proposition}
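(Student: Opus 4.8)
Since Proposition~\ref{pro_1} is a cornerstone of Bushnell--Kutzko's theory of covers, the most efficient route is to invoke \cite[\S 6]{MR1643417} directly; but here is how I would organize a self-contained argument. Everything rests on the Iwahori factorization $K = (K\cap\overline U)(K\cap L)(K\cap U)$ together with the hypothesis that $K\cap\overline U$ and $K\cap U$ act trivially under $\rho$. First I would prove (1): if $W' \subseteq W$ is a nonzero $K_L$-stable subspace and $k \in K$ is written as $k = \overline u\,\ell\,u$ with $\overline u \in K\cap\overline U$, $\ell \in K_L$, $u \in K\cap U$, then $\rho(k)W' = \rho(\overline u)\rho_L(\ell)\rho(u)W' = W'$ because the unipotent factors act trivially and $W'$ is $\rho_L$-stable; hence $W'$ is $K$-stable and equals $W$ by irreducibility of $\rho$, so $\rho_L$ is irreducible.

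For (2) I would first record the standard fact that, for $g \in L$, the group $K^g \cap K$ again admits an Iwahori factorization with respect to $(L,P)$, since $g$ normalizes $\overline U$, $L$ and $U$. Both inclusions then become formal: writing $k \in K^g\cap K$ as $\overline u\,\ell\,u$ with the factors in $K^g\cap K\cap\overline U$, $K_L^g\cap K_L$, $K^g\cap K\cap U$ respectively, one gets $\rho(k) = \rho_L(\ell)$ and, since $\rho^g$ is also trivial on the unipotent parts (as $g \in L$), $\rho^g(k) = \rho_L^g(\ell)$. Therefore $\mathrm{Hom}_{K^g\cap K}(\rho^g,\rho)$ and $\mathrm{Hom}_{K_L^g\cap K_L}(\rho_L^g,\rho_L)$ coincide as subspaces of $\mathrm{End}_{\mathbb C}(W)$, which gives $\mathfrak{I}_G(\rho)\cap L = \mathfrak{I}_L(\rho_L)$.

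For (3) and (4) I would use that a function $\Phi \in \mathcal{H}(G,\rho)$ supported on a single double coset $KzK$ with $z \in L$ is determined by its value $\Phi(z)$, which must lie in $\mathrm{Hom}_{K\cap K^z}(\rho^z,\rho)$; by (2) this is the same space as for $\rho_L$, so assigning to the function in $\mathcal{H}(L,\rho_L)$ supported on $K_LzK_L$ with value $\Phi(z)$ the function $\Phi$ is a bijection, once one checks --- again via the Iwahori factorization --- that the transformation law on $K$ is forced by the one on $K_L$. This yields $T$, the support statement, and the vector-space isomorphism $\mathcal{H}(L,\rho_L) \xrightarrow{\simeq} \mathcal{H}(G,\rho)_L$. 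The one genuinely non-formal point, and the step I expect to be the main obstacle, is that $T$ is an \emph{algebra} homomorphism: one must show that the convolution of two functions supported in $KLK$ is again supported in $KLK$ and that $T(f_1 * f_2) = T(f_1) * T(f_2)$. This comes down to analyzing the double cosets $Kz_1Kz_2K$ for $z_1,z_2 \in L$ and reducing the defining integral over $G$ to one over $L$ by absorbing the unipotent directions into the factorization --- precisely the computation on pp.~606--612 of \cite{MR1643417}, which I would follow.
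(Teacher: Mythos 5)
Your arguments for (1) and (2) are correct and follow the standard Bushnell--Kutzko line: the Iwahori factorization of $K$, the triviality of $\rho$ on $K\cap\overline U$ and $K\cap U$, and the inherited factorization of $K^g\cap K$ for $g\in L$ reduce everything to $(K_L,\rho_L)$. Your outline for constructing $T$ in (3)/(4) --- a function in $\mathcal{H}(G,\rho)$ supported on a single coset $KzK$ with $z\in L$ is determined by its value at $z$, which lies in an intertwining space that (2) identifies with the corresponding space for $\rho_L$ --- is likewise the right way to obtain the map, the support property, and the linear bijection onto $\mathcal{H}(G,\rho)_L$.

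Where your plan goes wrong is in the final step. You say the main obstacle is to show that $T$ is an \emph{algebra} homomorphism, i.e.\ $T(f_1*f_2)=T(f_1)*T(f_2)$. The proposition does not assert this, and it is false in general: item (4) says explicitly ``isomorphism of \emph{vector spaces},'' and the whole machinery of \S\S 6--7 of \cite{MR1643417} --- $(K,P)$-positive elements, the subalgebra $\mathcal{H}^+(L,\rho_L)$, the restricted map $T^+$ of Proposition~\ref{pro_2}, and its separate extension $t$ in Proposition~\ref{pro_3} --- exists precisely because $T$ itself does not respect convolution. For $f_i$ supported on $K_Lz_iK_L$, the support of $T(f_1)*T(f_2)$ lies in $Kz_1Kz_2K$, whereas $T(f_1*f_2)$ is supported in $K\bigl(K_Lz_1K_Lz_2K_L\bigr)K$; these agree only when one can collapse $z_1Kz_2$ to $z_1(K\cap L)z_2$ modulo $K$ on either side, which is what positivity of $z_1,z_2$ buys you and what fails otherwise (indeed $\mathcal{H}(G,\rho)_L$ need not even be closed under convolution). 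So the correct stopping point for (3)/(4) is the support statement and linear bijectivity, both of which you already have from (2); the algebraic compatibility is the content of the later Propositions~\ref{pro_2} and~\ref{pro_3}, not of this one, and an attempt to prove it here for all of $\mathcal{H}(L,\rho_L)$ would fail.
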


\begin{definition}
	
	An element $z \in L$ is called $(K,P)$-positive element if:
	
	\begin{enumerate}
		\item $z(K\cap \overline{U})z^{-1} \subseteq K \cap \overline{U}.$
		\item $z^{-1}(K \cap U)z \subseteq K \cap U.$
		\end{enumerate}
	\end{definition}
	
	\begin{definition}
		An element $z \in L$ is called strongly $(K,P)$-positive element if:
		
		\begin{enumerate}
			\item $z$ is $(K,P)$ positive.
			\item $z$ lies in center of $L$.
			\item For any compact open subgroups $K$ and $K'$ of $U$ there exists $m\geqslant 0$ and $m \in \mathbb{Z}$ such that $z^mKz^{-m} \subseteq K'$.
			\item For any compact open subgroups $K$ and $K'$ of $\overline{U}$ there exists $m\geqslant 0$ and $m \in \mathbb{Z}$ such that $z^{-m}Kz \subseteq K'$. 
		\end{enumerate}

	\end{definition}\par

	\begin{proposition}[Bushnell and Kutzko, Lemma 6.14 \cite{MR1643417}, Proposition 7.1, \cite{MR1643417}]\label{pro_2}
		
		Strongly $(K,P)$-positive elements exist and given a strongly positive element $z \in L$ , there exists a unique function $\phi_z \in \mathcal{H}(L, \rho_L)$ with support $K_LzK_L$ such that $\phi_z(z)$ is identity function in $End_{\mathbb{C}}(\rho_L^{\vee})$.
	\end{proposition}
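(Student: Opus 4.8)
The plan is to prove the two assertions separately; since only the concrete pair at hand is needed, I would argue directly rather than invoke the general formalism of \cite{MR1643417}. Throughout I take $K=\mathfrak{P}$, so that $K\cap L=\mathfrak{P}_{0}=K_{L}$, $K\cap U=\mathfrak{P}_{+}$, $K\cap\overline{U}=\mathfrak{P}_{-}$, and $\rho_{L}=\rho_{0}$, which is irreducible by Proposition~\ref{pro_1}(1).

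For the existence of strongly $(K,P)$-positive elements I would exhibit one explicitly: whichever of $\zeta$, $\zeta^{-1}$ satisfies the positivity inequalities, a short computation showing that this is $z=\zeta^{-1}$ (for $P$ with unipotent radical $U$ as defined above). First, $z\in Z(L)$ — immediate from the description of $Z(L)$, taking $a=\varpi_{E}^{-1}$ and $\lambda=1$ — so condition~(2) of strong positivity holds and, moreover, conjugation by $z$ is trivial on $K_{L}$. Next I would record the effect of conjugation by $\zeta$ on a general element of $U$,
\[
\zeta\begin{bmatrix} Id_{n} & u & X\\ 0 & 1 & -{}^{t}\overline{u}\\ 0 & 0 & Id_{n}\end{bmatrix}\zeta^{-1}=\begin{bmatrix} Id_{n} & \varpi_{E}u & \varpi_{E}\overline{\varpi}_{E}X\\ 0 & 1 & -\overline{\varpi}_{E}\,{}^{t}\overline{u}\\ 0 & 0 & Id_{n}\end{bmatrix},
\]
and symmetrically on $\overline{U}$, where the three off-diagonal blocks get divided by $\varpi_{E}$, $\varpi_{E}\overline{\varpi}_{E}$, $\overline{\varpi}_{E}$. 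Since $\varpi_{E}$ and $\overline{\varpi}_{E}$ are uniformizers in both the unramified case ($\overline{\varpi}_{E}=\varpi_{E}$) and the ramified case ($\overline{\varpi}_{E}=-\varpi_{E}$, $\varpi_{E}^{2}=\varpi_{F}$), conjugation by $\zeta$ strictly contracts $U$ and strictly expands $\overline{U}$; hence conjugation by $z=\zeta^{-1}$ strictly contracts $\overline{U}$, while $z^{-1}(\cdot)z$ strictly contracts $U$. In particular $z(\mathfrak{P}\cap\overline{U})z^{-1}\subseteq\mathfrak{P}\cap\overline{U}$ and $z^{-1}(\mathfrak{P}\cap U)z\subseteq\mathfrak{P}\cap U$, giving condition~(1), and the iteration conditions~(3)--(4) follow from the same computation by the standard contracting-automorphism argument: each coordinate is scaled by a uniformizer power of valuation bounded below by a fixed positive constant, so finitely many iterations of $z^{\pm m}(\cdot)z^{\mp m}$ carry any compact open subgroup of $\overline{U}$, resp.\ of $U$, into any prescribed one.

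For the second assertion, the key point is once more that $z\in Z(L)$: conjugation by $z$ is trivial on $K_{L}$, so $zK_{L}z^{-1}=K_{L}$ and the double coset $K_{L}zK_{L}$ is the single coset $zK_{L}=K_{L}z$. I would then define $\phi_{z}\colon L\to\mathrm{End}_{\mathbb{C}}(\rho_{L}^{\vee})$ by $\phi_{z}(zh)=\rho_{L}^{\vee}(h)$ for $h\in K_{L}$, and $\phi_{z}\equiv 0$ off $K_{L}z$. This is well defined since $h\mapsto zh$ parametrizes $K_{L}z$ bijectively; its support is the compact open set $K_{L}z$; and it is bi-equivariant, for if $g=zh$ then centrality gives $k_{1}gk_{2}=z(k_{1}hk_{2})$, whence $\phi_{z}(k_{1}gk_{2})=\rho_{L}^{\vee}(k_{1}hk_{2})=\rho_{L}^{\vee}(k_{1})\phi_{z}(g)\rho_{L}^{\vee}(k_{2})$ (and $k_{1}gk_{2}\notin K_{L}z$ when $g\notin K_{L}z$). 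Thus $\phi_{z}\in\mathcal{H}(L,\rho_{L})$ with support $K_{L}zK_{L}$, and $\phi_{z}(z)=\rho_{L}^{\vee}(1)=\mathrm{id}$. For uniqueness, any $f\in\mathcal{H}(L,\rho_{L})$ supported on $K_{L}zK_{L}=K_{L}z$ satisfies $f(zh)=f(z)\rho_{L}^{\vee}(h)$, so $f$ is determined by $f(z)$, and $f(z)=\mathrm{id}$ forces $f=\phi_{z}$. (One may also observe that the equivariance forces $f(z)$ to commute with the image of $\rho_{L}^{\vee}$, so by Schur's lemma and irreducibility of $\rho_{L}$ the functions supported on $K_{L}zK_{L}$ form a line spanned by $\phi_{z}$.)

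I expect the only genuine work to lie in the existence step, and to be bookkeeping rather than conceptual: tracking $\varpi_{E}$, $\overline{\varpi}_{E}$, $\varpi_{F}$ through the conjugation formulas so the contraction estimates are uniform in the ramified and unramified cases, and checking that the single element $z=\zeta^{-1}$ satisfies conditions~(1)--(4) simultaneously. The construction and uniqueness of $\phi_{z}$ is then essentially formal, the centrality of $z$ having reduced the relevant double coset to a single coset.
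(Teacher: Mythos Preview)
The paper does not prove this proposition at all; it is simply cited from Bushnell--Kutzko \cite{MR1643417}. Your direct verification for the specific pair $(K,\rho)=(\mathfrak{P},\rho)$ is correct and more informative than a bare citation: the explicit conjugation formula for $\zeta$ on $U$ (and its analogue on $\overline{U}$) gives existence immediately, and your argument for the existence and uniqueness of $\phi_{z}$ via centrality of $z$ and the collapse $K_{L}zK_{L}=zK_{L}$ is clean and complete.

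One point of caution, not an error in your proof but a discrepancy with the paper's later usage. You followed the Definition in Section~2.4, which asks that $z$ contract $K\cap\overline{U}$ and $z^{-1}$ contract $K\cap U$, and correctly concluded $z=\zeta^{-1}$. However, in Section~9.2 the paper defines $\mathcal{I}^{+}=\{x\in L\mid x\mathfrak{P}_{+}x^{-1}\subseteq\mathfrak{P}_{+},\ x^{-1}\mathfrak{P}_{-}x\subseteq\mathfrak{P}_{-}\}$, the opposite convention (and the standard one in \cite{MR1643417}), and then uses $\zeta\in\mathcal{I}^{+}$ as the strongly positive element throughout the Hecke-algebra computations. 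Your conjugation calculation already shows that $\zeta$ satisfies this second convention, so nothing substantive changes; just be aware that the element the paper actually uses downstream is $\zeta$, not $\zeta^{-1}$, and the two definitions in the paper are mutually inconsistent.
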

	
	\[
	\mathcal{H}^{+}(L,\rho_L)= \left\lbrace f \colon L \to End_{\mathbb{C}}(\rho_L^{\vee}) \; \middle|  \;
	\begin{varwidth}{\linewidth}
	supp($f$) is compact and consists\\ 
	of strongly $(K,P)$-positive elements \\ 
	and $f(k_1lk_2)= \rho_L^{\vee}(k_1)f(l)\rho_L^{\vee}(k_2)$\\
	where $k_1,k_2 \in K_L, l \in L$
	\end{varwidth}
	\right \rbrace.
	\]
	
	The isomorphism of vector spaces $T \colon \mathcal{H}(L,\rho_L) \longrightarrow \mathcal{H}(G,\rho)_L$ restricts to an embedding of algebras:

	\begin{center}
		$T^+ \colon \mathcal{H}^{+}(L,\rho_L) \longrightarrow \mathcal{H}(G,\rho)_L \hookrightarrow \mathcal{H}(G,\rho)$.\\
	\end{center}\par

	\begin{proposition}[Bushnell and Kutzko, Theorem 7.2.i \cite{MR1643417}]\label{pro_3}
		The embedding $T^+$ extends to an embedding of algebras $t \colon \mathcal{H}(L,\rho_L)\longrightarrow \mathcal{H}(G,\rho) \Longleftrightarrow T^+(\phi_z)$ is invertible for some strongly $(K,P)$-positive element $z$, where  $\phi_z \in \mathcal{H}(L,\rho_L)$ has support $K_LzK_L$ with $\phi_z(z)=1$.
	\end{proposition}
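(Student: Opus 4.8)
\emph{Proof proposal.} The plan is to realise $\mathcal H(L,\rho_L)$ as the localisation of $\mathcal H^{+}(L,\rho_L)$ obtained by inverting a single element $\phi_z$, $z$ strongly $(K,P)$-positive; once that description is in place, both implications become formal.

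\emph{Step 1: the localisation picture.} For $z$ strongly $(K,P)$-positive we have $z\in Z(L)$, so $z$ centralises $K_L=K\cap L$ and $K_LzK_L=zK_L$. A short computation with supports and $K_L$-bi-equivariance (using the normalisation $\phi_w(w)=\mathrm{id}_{\rho_L^{\vee}}$ of Proposition~\ref{pro_2}) gives $\phi_z*\phi_{z'}=\phi_{zz'}$ for $z,z'$ strongly positive, and shows that the function $\phi_{z^{-1}}\in\mathcal H(L,\rho_L)$ supported on $z^{-1}K_L$ satisfies $\phi_z*\phi_{z^{-1}}=\phi_{z^{-1}}*\phi_z=\mathbf{1}_{K_L}$, the identity of $\mathcal H(L,\rho_L)$. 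Hence every $\phi_z$ is a unit of $\mathcal H(L,\rho_L)$, and it is central in $\mathcal H^{+}(L,\rho_L)$ because $z$ is central in $L$. From the structure theory recalled before Propositions~\ref{pro_1} and~\ref{pro_2} one extracts a ``clearing denominators'' statement: the conditions defining a strongly $(K,P)$-positive element force the powers $z^{n}$ to be cofinal among all strongly positive elements, so that for any $f\in\mathcal H(L,\rho_L)$ one has $\phi_{z^{n}}*f\in\mathcal H^{+}(L,\rho_L)$ for $n\gg0$, whence $f=\phi_z^{-n}*(\phi_{z^{n}}*f)$. Therefore, for every strongly $(K,P)$-positive $z$,
\[
\mathcal H(L,\rho_L)=\mathcal H^{+}(L,\rho_L)\bigl[\phi_z^{-1}\bigr],
\]
the localisation of $\mathcal H^{+}(L,\rho_L)$ at the central multiplicative set $\{\phi_z^{n}:n\ge0\}$.

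\emph{Step 2: the two implications.} Suppose first $T^{+}(\phi_z)$ is invertible in $\mathcal H(G,\rho)$ for some strongly positive $z$. Since $T^{+}\colon\mathcal H^{+}(L,\rho_L)\to\mathcal H(G,\rho)$ is an algebra homomorphism carrying the central element $\phi_z$ to a unit, the universal property of the localisation of Step 1 produces a unique algebra homomorphism $t\colon\mathcal H(L,\rho_L)\to\mathcal H(G,\rho)$ with $t|_{\mathcal H^{+}(L,\rho_L)}=T^{+}$, namely $t(\phi_z^{-n}*g)=T^{+}(\phi_z)^{-n}\,T^{+}(g)$. It is injective: $t(\phi_z^{-n}*g)=0$ forces $T^{+}(g)=0$, and $T^{+}$ is injective by Proposition~\ref{pro_1}, so $g=0$; thus $t$ is the desired embedding. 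Conversely, suppose an algebra embedding $t$ extending $T^{+}$ exists. The element $\mathbf{1}_{K_L}\in\mathcal H^{+}(L,\rho_L)$ is idempotent, so $T^{+}(\mathbf{1}_{K_L})$ is an idempotent of $\mathcal H(G,\rho)$ supported in $K$ (Proposition~\ref{pro_1}(iii)); but the algebra of $\rho^{\vee}$-bi-equivariant functions supported in $K$ is isomorphic to $\mathrm{End}_{K}(\rho^{\vee})\cong\mathbb C$ by Schur's lemma, whose only idempotents are $0$ and $\mathbf{1}_{K}$, and injectivity of $T^{+}$ excludes $0$. Hence $t(\mathbf{1}_{K_L})=T^{+}(\mathbf{1}_{K_L})=\mathbf{1}_{K}$ is the identity of $\mathcal H(G,\rho)$, so $t$ is unital; since $\phi_z$ is a unit of $\mathcal H(L,\rho_L)$ by Step 1, $T^{+}(\phi_z)=t(\phi_z)$ is a unit of $\mathcal H(G,\rho)$.

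\emph{On the main obstacle.} The manipulations in Step 2 are routine ring theory; the substance lies in Step 1, specifically in the clearing-denominators assertion that inverting a single $\phi_z$ already recovers all of $\mathcal H(L,\rho_L)$. This rests on the convolution/support calculus for the double cosets $K_LzK_L$ together with the cofinality built into the notion of a strongly $(K,P)$-positive element, and — in the generality in which we apply it, namely the Siegel Levi of $\mathrm U(n,n+1)$ — on the fact that $\mathfrak I_L(\rho_L)\subseteq Z(L)K_L$, so that $\mathcal H(L,\rho_L)$ is supported on cosets of $Z(L)$ and is in fact the Laurent polynomial ring $\mathbb C[\phi_\zeta,\phi_\zeta^{-1}]$ with $\mathcal H^{+}(L,\rho_L)=\mathbb C[\phi_\zeta]$; in that case Step 1 is transparent and the proposition reduces to the statement that $\mathbb C[X]\to\mathcal H(G,\rho)$, $X\mapsto T^{+}(\phi_\zeta)$, extends to $\mathbb C[X,X^{-1}]$ precisely when $T^{+}(\phi_\zeta)$ is invertible.
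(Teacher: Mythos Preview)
Your argument is correct and is the standard localisation proof. The paper does not supply its own proof of this proposition---it is simply quoted from Bushnell--Kutzko---but the explicit formula for the extension that the paper writes down later in \S\ref{sec_9} (namely $t(\phi)=\psi^{-m}T^{+}(\alpha^{m}\phi)$ with $m$ chosen so that $\alpha^{m}\phi\in\mathcal H^{+}(L,\rho_0)$) is exactly your localisation recipe, so your Step~1 and Step~2 recover both the statement and the construction the paper relies on.
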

	
	\begin{definition}\label{def}
		Let $L$ be a proper Levi subgroup of $G$. Let $K_L$ be a compact open subgroup of $L$ and $\rho_L$ be an irreducible smooth representation of $K_L$. Let $K$ be a compact open subgroup of $G$ and $\rho$ be an irreducible, smooth representation of $K$. Then we say $(K, \rho)$ is a $G$-cover of $(K_L, \rho_L)$
		if
		
		\begin{enumerate}
		\item The pair $(K, \rho)$ is decomposed with respect to $(L,P)$ for every parabolic subgroup $P$ of $G$ with Levi component $L$.
		\item $K \cap L= K_L$  and $\rho|_L \simeq \rho_L$.
		\item The embedding $T^+ \colon \mathcal{H}^{+}(L,\rho_L) \longrightarrow \mathcal{H}(G,\rho)$ extends to an embedding of algebras $ t \colon \mathcal{H}(L,\rho_L) \longrightarrow \mathcal{H}(G,\rho)$.
	\end{enumerate}
\end{definition}\par

\begin{proposition}[Bushnell and Kutzko, Theorem 8.3 \cite{MR1643417}]\label{Types_Covers}
	Let $s_L=[L,\pi]_L \in \mathfrak{B}(L)$ and $s= [L,\pi]_G \in \mathfrak{B}(G)$ . Say $(K_L,\rho_L)$ is an $s_L$-type and $(K,\rho)$ is a $G$-cover of $(K_L,s_L)$. Then $(K,\rho)$ is an $s$-type.
\end{proposition}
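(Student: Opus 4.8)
By definition $(K,\rho)$ is an $s$-type exactly when $\mathfrak{R}_\rho(G)=\mathfrak{R}^s(G)$, so the plan is to prove the two inclusions separately; the common mechanism is a comparison of the functor $e_\rho(-)$ (equivalently $\mathrm{Hom}_K(\rho,-)$) on $\mathfrak{R}(G)$ with the functor $e_{\rho_L}(-)$ on $\mathfrak{R}(L)$, transported through parabolic induction $\iota_P^G$ and the normalized Jacquet functor $r_P^G$, all of it made compatible with the algebra embedding $t\colon\mathcal{H}(L,\rho_L)\hookrightarrow\mathcal{H}(G,\rho)$ supplied by the cover hypothesis (Definition~\ref{def}(3), Proposition~\ref{pro_3}). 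The heart of the matter is to establish natural isomorphisms
\[
e_\rho\!\left(\iota_P^G W\right)\;\cong\;e_{\rho_L}(W)\quad(W\in\mathfrak{R}(L)),\qquad\qquad t^{*}\!\left(e_\rho V\right)\;\cong\;e_{\rho_L}\!\left(r_P^G V\right)\quad(V\in\mathfrak{R}(G)),
\]
the first an isomorphism of $\mathcal{H}(L,\rho_L)$-modules (with the left-hand module structure coming through $t$), the second exhibiting the restriction along $t$ of the $\mathcal{H}(G,\rho)$-module $e_\rho V$ as $e_{\rho_L}$ of a Jacquet module. To produce them I would exploit that $(K,\rho)$ is decomposed with respect to $(L,P)$: the Iwahori factorization $K=(K\cap\overline U)(K\cap L)(K\cap U)$ together with the triviality of $\rho$ on $K\cap\overline U$ and $K\cap U$ makes possible a Mackey-theoretic analysis of $\iota_P^G W|_K$ over the finite double coset set $P\backslash G/K$, identifying the underlying vector spaces; the delicate point is that this identification must intertwine the \emph{full} $\mathcal{H}(L,\rho_L)$-action, not merely the $\mathcal{H}^{+}(L,\rho_L)$-action through $T^{+}$, and this is exactly where the cover hypothesis enters — for a strongly $(K,P)$-positive element $z$ the operator $T^{+}(\phi_z)$ implements the projection towards the Jacquet module and is invertible, this invertibility being precisely the cover condition of Proposition~\ref{pro_3}. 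I expect this comparison, which is the substance of \S\S6--8 of \cite{MR1643417}, to be the main obstacle; the rest is comparatively formal, using only general facts (exactness of $e_\rho(-)$, the Bernstein decomposition, local Noetherianity of $\mathfrak{R}(G)$, and Casselman's description of cuspidal support through Jacquet modules).

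\emph{The inclusion $\mathfrak{R}_\rho(G)\subseteq\mathfrak{R}^s(G)$.} Let $(\pi_G,V)$ be irreducible with $e_\rho V\neq 0$. Since $t$ is a unital embedding, $t^{*}(e_\rho V)\neq 0$, so $e_{\rho_L}(r_P^G V)\neq 0$ by the second comparison isomorphism, and $N:=\mathcal{H}(L)\!\cdot\! e_{\rho_L}(r_P^G V)$ is a nonzero subrepresentation of $r_P^G V$ which, because $(K_L,\rho_L)$ is an $s_L$-type, lies in $\mathfrak{R}^{s_L}(L)$. As $\pi$ is supercuspidal and $s_L=[L,\pi]_L$, every irreducible object of $\mathfrak{R}^{s_L}(L)$ is an unramified twist of $\pi$; hence $r_P^G V$ has an irreducible supercuspidal subquotient $\pi\otimes\chi$, which by Casselman's theory belongs to the cuspidal support of $V$, so $V\in\mathfrak{R}^s(G)$. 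Contrapositively, $e_\rho V=0$ whenever $V$ is irreducible with inertial support $\neq s$. A short argument upgrades this to $e_\rho V'=0$ for every $V'\in\mathfrak{R}^t(G)$ with $t\neq s$: otherwise a cyclic subrepresentation of $V'$ generated by a $\rho$-isotypic vector lies in $\mathfrak{R}^t(G)\cap\mathfrak{R}_\rho(G)$, hence (local Noetherianity) has an irreducible quotient, which is again $\rho$-generated and lies in $\mathfrak{R}^t(G)$, contradicting the previous sentence. Consequently $e_\rho V$ always lies in the $\mathfrak{R}^s(G)$-component $V_s$ of $V$ given by the Bernstein decomposition, so any $V\in\mathfrak{R}_\rho(G)$ satisfies $V=\mathcal{H}(G)\!\cdot\! e_\rho V\subseteq\mathcal{H}(G)\!\cdot\! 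V_s=V_s$, i.e.\ $V\in\mathfrak{R}^s(G)$.

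\emph{The inclusion $\mathfrak{R}^s(G)\subseteq\mathfrak{R}_\rho(G)$.} Fix a progenerator $\Pi_{s_L}$ of the category $\mathfrak{R}^{s_L}(L)$. Since $(K_L,\rho_L)$ is an $s_L$-type, $\Pi_{s_L}=\mathcal{H}(L)\!\cdot\! e_{\rho_L}\Pi_{s_L}$; tracking generation (not merely the $\rho$-isotypic subspace) through the Mackey description of $\iota_P^G\Pi_{s_L}|_K$, and using the spreading-out clauses (3)--(4) in the definition of a strongly $(K,P)$-positive element to carry arbitrary vectors into the support of the $\rho$-isotypic ones, one obtains $\iota_P^G\Pi_{s_L}=\mathcal{H}(G)\!\cdot\! e_\rho(\iota_P^G\Pi_{s_L})$, i.e.\ $\iota_P^G\Pi_{s_L}\in\mathfrak{R}_\rho(G)$. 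On the other hand $\iota_P^G$ is exact, preserves projectives (it has the exact right adjoint $r_{\overline P}^{\,G}$), and carries the progenerator $\Pi_{s_L}$ to a progenerator of $\mathfrak{R}^s(G)$ (standard Bernstein theory); hence every object of $\mathfrak{R}^s(G)$ is a quotient of a direct sum of copies of $\iota_P^G\Pi_{s_L}$. Since $\mathfrak{R}_\rho(G)$ is visibly closed under quotients and arbitrary direct sums, $\mathfrak{R}^s(G)\subseteq\mathfrak{R}_\rho(G)$. Combining the two inclusions gives $\mathfrak{R}_\rho(G)=\mathfrak{R}^s(G)$, i.e.\ $(K,\rho)$ is an $s$-type.
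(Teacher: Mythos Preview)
The paper does not supply its own proof of this proposition: it is stated with attribution to Bushnell--Kutzko, Theorem~8.3 of \cite{MR1643417}, and immediately applied without further argument. So there is no proof in the paper to compare against; what you have written is a sketch of the original Bushnell--Kutzko argument itself.

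As a sketch of that argument your strategy is broadly correct. The two comparison isomorphisms you isolate are exactly the content of \S7 of \cite{MR1643417} (Theorem~7.9 there), and the reduction of the type property to a statement about irreducibles is their Proposition~5.4. Your treatment of the inclusion $\mathfrak{R}_\rho(G)\subseteq\mathfrak{R}^s(G)$ is the standard one. For the reverse inclusion, however, your progenerator route is heavier than necessary and the step ``tracking generation \ldots\ one obtains $\iota_P^G\Pi_{s_L}=\mathcal{H}(G)\cdot e_\rho(\iota_P^G\Pi_{s_L})$'' is the one place where real work is being hidden: it is not automatic that parabolic induction carries $\rho_L$-generated objects to $\rho$-generated objects, and the ``spreading-out'' gesture does not by itself settle it. Bushnell--Kutzko avoid this by arguing directly on irreducibles: if $V\in\mathfrak{R}^s(G)$ is irreducible then $r_P^G V$ has an irreducible subquotient in $\mathfrak{R}^{s_L}(L)$, hence $e_{\rho_L}(r_P^G V)\neq 0$, hence by your second comparison $e_\rho V\neq 0$; combined with the first inclusion and their Proposition~5.4 this already gives $\mathfrak{R}_\rho(G)=\mathfrak{R}^s(G)$. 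That shortcut both simplifies the argument and removes the one genuinely underspecified step in your outline.
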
\par

Note that in this paper $K=\mathfrak{P}, K_L=K \cap L= \mathfrak{P} \cap L= \mathfrak{P}_0$ and $\rho_L= \rho_0$. Also note that in this paper, $\rho$ is defined as $\rho(p)=\rho_0(p_0)$ for $p \in \mathfrak{P}$ where by Iwahori Factorization $p=p_{+}p_{0}p_{-}, p_{+}\in \mathfrak{P} \cap U, p_0 \in \mathfrak{P}_0, p_{-} \in \mathfrak{P} \cup \overline{U}$. Observe that from definition \ref{def}, we can conclude that $(\mathfrak{P},\rho)$ is a cover of $(\mathfrak{P}_0,\rho_0)$. Also observe that as $(\mathfrak{P_0},\rho_0)$ is $s_L$-type and as $(\mathfrak{P},\rho)$ is a cover of $(\mathfrak{P}_0,\rho_0)$, so from proposition \ref{Types_Covers}, it follows that  $(\mathfrak{P},\rho)$ is a $s$-type.\par

Recall the categories $\mathfrak{R}^{s_L}(L), \mathfrak{R}^{s}(G)$ where $s_L=[L,\pi]_L$ and $s= [L,\pi]_G$. Note that $\pi \nu$ lies in the category $\mathfrak{R}^{s_L}(L)$ and $\iota_P^G(\pi \nu)$ lies in $\mathfrak{R}^{s}(G)$.\par

Note that $\mathcal{H}(G,\rho)-Mod$ is the category of $\mathcal{H}(G,\rho)$-modules and $\mathcal{H}(L,\rho_L)-Mod$ be the category of $\mathcal{H}(L,\rho_L)$-modules.\par

The functor $\iota_P^G$ was defined earlier. The functor $m_L \colon \mathfrak{R}^{s_L}(L) \longrightarrow  \mathcal{H}(L,\rho_L)-Mod$ is given by $m_L(\pi \nu)= \mathrm{Hom}_{K_L}(\rho_L, \pi \nu)$. The representation $\pi \nu \in \mathfrak{R}^{s_L}(L)$  being irreducible, it corresponds to a simple $\mathcal{H}(L,\rho_0)$-module under the functor $m_L$. Let $f \in m_L(\pi \nu), \gamma \in \mathcal{H}(L,\rho_0)$ and $w \in V$. The action of $\mathcal{H}(L,\rho_0)$ on $m_L(\pi \nu)$ is given by $(\gamma. f)(w)= \int_L \pi(l) \nu(l)f(\gamma^{\vee}(l^{-1})w)dl$. Here $\gamma^{\vee}$ is defined on $L$ by $\gamma^{\vee}(l^{-1})=\gamma(l)^{\vee}$ for $l \in L$.\par

The functor $m_G \colon \mathfrak{R}^{s}(G) \longrightarrow  \mathcal{H}(G,\rho)-Mod$ is given by: \[m_G(\iota_P^G(\pi \nu))= \mathrm{Hom}_{K}(\rho, \iota_P^G(\pi \nu)).\] \par

Further the functor $(T_P)_* \colon  \mathcal{H}(L,\rho_L)-Mod \longrightarrow \mathcal{H}(G,\rho)-Mod$ is given by, for $M$ an $\mathcal{H}(L, \rho_0)$-module, 

\[(T_P)_{*}(M)= \mathrm{Hom}_{\mathcal{H}(L, \rho_0)}(\mathcal{H}(G, \rho),M)\] where $\mathcal{H}(G, \rho)$ is viewed as a $\mathcal{H}(L, \rho_0)$-module via $T_P$. The action of $\mathcal{H}(G,\rho)$ on $(T_P)_*(M)$ is given by 

\[h'\psi(h_1)=\psi(h_1 h')\] where $\psi \in (T_P)_*(M), h_1, h' \in \mathcal{H}(G, \rho)$.\par
\par

The importance of covers is seen from the following commutative diagram  which we will use in answering the question which we posed earlier in this paper.

\[
\begin{CD}
\mathfrak{R}^{s}(G)    @>m_G>>    \mathcal{H}(G,\rho)-Mod\\
@A\iota_P^GAA                                    @A(T_P)_*AA\\
\mathfrak{R}^{s_L}(L)    @>m_L>>     \mathcal{H}(L,\rho_0)-Mod
\end{CD}
\]\par

Let us denote the set of strongly $(\mathfrak{P},P)$-positive elements by $\mathcal{I}^{+}$. Thus

\[
\mathcal{I}^{+}=\{x \in L \mid x\mathfrak{P}_{+}x^{-1}\subseteq\mathfrak{P}_{+}, x^{-1}\mathfrak{P}_{-}x \subseteq \mathfrak{P}_{-} \}.
\] \par

where $\mathfrak{P}_{+}=\mathfrak{P} \cap U, \mathfrak{P}_{-}= \mathfrak{P} \cap \overline{U}$. Let $V$ be the vector space corresponding to $\rho_0$. We shall show in section 7 that $\mathcal{H}(L, \rho_0) = \mathbb{C}[\alpha, \alpha^{-1}]$ where $\alpha \in \mathcal{H}(L, \rho_0)$ has support $\mathfrak{P}_0 \zeta \mathfrak{P}_0$ and $\alpha(\zeta)=1_{{V}^{\vee}}$. We will also show that $\alpha^n(\zeta^n) = (\alpha(\zeta))^n$ for $n \in \mathbb{Z}$ and $\text{supp}(\alpha^n)= \mathfrak{P}_0 \zeta^n \mathfrak{P}_0= \mathfrak{P}_0 \zeta^n =  \zeta^n \mathfrak{P}_0$ for $n \in \mathbb{Z}$.\\

We have
\[
\mathcal{H}^{+}(L, \rho_0)= \{f \in \mathcal{H}(L, \rho_0) \mid \text{supp}f \subseteq \mathfrak{P}_0\mathcal{I}^{+}\mathfrak{P}_0 \}.
\]\par

Note $\zeta \in \mathcal{I}^{+}$, so $\mathcal{H}^{+}(L, \rho_0)= \mathbb{C}[\alpha]$. The following discussion is taken from pages 612-619 in \cite{MR1643417}. Let $W$ be space of $\rho_0$. Let $f \in \mathcal{H}^{+}(L, \rho_0)$ with support of $f$ being $\mathfrak{P}_0 x\mathfrak{P}_0$ for $x \in  \mathcal{I}^{+}$. The map $F \in \mathcal{H}(G, \rho)$ is supported on $\mathfrak{P} x \mathfrak{P}$ and $f(x)=F(x)$. The algebra embedding 
\[T^{+} \colon \mathcal{H}^{+}(L, \rho_0) \longrightarrow \mathcal{H}(G, \rho)\] is given by $T^{+}(f)=F$, where $F$ is invertible.\par

Recall support of $\alpha \in \mathcal{H}^{+}(L, \rho_0)$ is $\mathfrak{P}_0 \zeta$. Let $T^{+}(\alpha)= \psi$, where $\psi \in \mathcal{H}(G, \rho)$ has support $\mathfrak{P} \zeta \mathfrak{P}$ and $\alpha(\zeta)=\psi(\zeta)=1_{W^{\vee}}$. As $T^{+}(\alpha)= \psi$ is invertible, so from Proposition ~\ref{pro_3} we can conclude that $T^{+}$ extends to an embedding of algebras  
\[t \colon \mathcal{H}(L, \rho_0) \longrightarrow \mathcal{H}(G, \rho).\]

Let $\phi \in \mathcal{H}(L, \rho_0)$  and $m \in \mathbb{N}$ is chosen such that $\alpha^m \phi \in \mathcal{H}^{+}(L, \rho_0)$. The map $t$ is then given by $t(\phi)= \psi^{-m}T^{+}(\alpha^m \phi)$. For  $\phi \in \mathcal{H}(L,\rho_0)$, the map 
\[t_P \colon \mathcal{H}(L,\rho_0) \longrightarrow \mathcal{H}(G, \rho)\] is given by $t_P(\phi)= t(\phi \delta_P)$, where $\phi \delta_P \in \mathcal{H}(L,\rho_0)$ and is the map \[\phi \delta_P \colon L \longrightarrow End_{\mathbb{C}}(\rho_0^{\vee})\] given by $(\phi \delta_P)(l)=\phi(l)\delta_P(l)$ for $l \in L$. As $\alpha \in \mathcal{H}(L, \rho_0)$ we have
\begin{align*}
t_P(\alpha)(\zeta) &= t(\alpha \delta_P)(\zeta)\\
&= T^{+}(\alpha \delta_P)(\zeta)\\
&= \delta_P(\zeta)T^{+}(\alpha)(\zeta)\\
&= \delta_P(\zeta)\psi(\zeta)\\
&= \delta_P(\zeta)1_{W^{\vee}}.
\end{align*}\par

Let $\mathcal{H}(L, \rho_0)$-Mod denote the category of $\mathcal{H}(L, \rho_0)$-modules and  $\mathcal{H}(G, \rho)$-Mod denote the category of $\mathcal{H}(G, \rho)$-modules. The map $t_P$ induces a functor $(t_P)_{*}$ given by

\[(t_P)_{*} \colon \mathcal{H}(L, \rho_0)-\text{Mod} \longrightarrow \mathcal{H}(G, \rho)-\text{Mod}.\] 

For $M$ an $\mathcal{H}(L, \rho_0)$-module,

\[(t_P)_{*}(M)= \mathrm{Hom}_{\mathcal{H}(L, \rho_0)}(\mathcal{H}(G, \rho),M)\] where $\mathcal{H}(G, \rho)$ is viewed as a $\mathcal{H}(L, \rho_0)$-module via $t_P$. The action of $\mathcal{H}(G,\rho)$ on $(t_P)_*(M)$ is given by 

\[h'\psi(h_1)=\psi(h_1 h')\] where $\psi \in (t_P)_*(M), h_1, h' \in \mathcal{H}(G, \rho)$.\par

Let $\tau \in \mathfrak{R}^{[L,\pi]_L}(L)$ then functor $m_L \colon \mathfrak{R}^{[L,\pi]_L}(L) \longrightarrow \mathcal{H}(L,\rho_0)-Mod$ is given by $m_L(\tau)= \mathrm{Hom}_{\mathfrak{P}_0}(\rho_0, \tau)$. The functor $m_L$ is an equivalence of categories. Let $f \in m_L(\tau), \gamma \in \mathcal{H}(L,\rho_0)$ and $w \in W$. The action of $\mathcal{H}(L,\rho_0)$ on $m_L(\tau)$ is given by $(\gamma. f)(w)= \int_L \tau(l)f(\gamma^{\vee}(l^{-1})w)dl$. Here $\gamma^{\vee}$ is defined on $L$ by $\gamma^{\vee}(l^{-1})=\gamma(l)^{\vee}$ for $l \in L$. Let $\tau' \in \mathfrak{R}^{[L,\pi]_G}(G)$ then the functor $m_G \colon \mathfrak{R}^{[L,\pi]_G}(G) \longrightarrow \mathcal{H}(G,\rho)-Mod$ is given by $m_G(\tau')=\mathrm{Hom}_{\mathfrak{P}}(\rho, \tau')$. The functor  $m_G$ is an equivalence of categories. From Corollary 8.4 in \cite{MR1643417}, the functors $m_L, m_G, Ind_P^G, (t_P)_*$  fit into the following commutative diagram:

\[
\begin{CD}
\mathfrak{R}^{[L,\pi]_G}(G)    @>m_G>>    \mathcal{H}(G,\rho)-Mod\\
@AInd_P^GAA                                    @A(t_P)_*AA\\
\mathfrak{R}^{[L,\pi]_L}(L)    @>m_L>>     \mathcal{H}(L,\rho_0)-Mod
\end{CD}
\]\par

If $\tau \in \mathfrak{R}^{[L,\pi]_L}(L)$ then from the above commutative diagram, we see that $(t_P)_*(m_L(\tau)) \cong m_G(Ind_P^G \tau)$ as $\mathcal{H}(G, \rho)$-modules. Replacing $\tau$ by $(\tau \otimes \delta_P^{1/2})$ in the above expression, $(t_P)_*(m_L(\tau \otimes \delta_P^{1/2})) \cong m_G(Ind_P^G (\tau \otimes \delta_P^{1/2}))$ as $\mathcal{H}(G, \rho)$-modules. As $Ind_P^G(\tau \otimes \delta_P^{1/2})= \iota_P^G(\tau)$, we have $(t_P)_*(m_L(\tau \otimes \delta_P^{1/2})) \cong m_G(\iota_P^G(\tau))$ as $\mathcal{H}(G, \rho)$-modules.\par

Our aim is to find an algebra embedding $T_P \colon \mathcal{H}(L,\rho_0) \longrightarrow \mathcal{H}(G, \rho)$ such that the following diagram commutes:

\[
\begin{CD}
\mathfrak{R}^{[L,\pi]_G}(G)    @>m_G>>    \mathcal{H}(G,\rho)-Mod\\
@A\iota_P^GAA                                    @A(T_P)_*AA\\
\mathfrak{R}^{[L,\pi]_L}(L)    @>m_L>>     \mathcal{H}(L,\rho_0)-Mod
\end{CD}
\]\par

Let $\tau \in \mathfrak{R}^{[L,\pi]_L}(L)$ then $m_L(\tau) \in \mathcal{H}(L,\rho_0)$- Mod. The functor $(T_P)_*$ is defined as below:

\[
(T_P)_*(m_L(\tau))= \left\lbrace \psi \colon \mathcal{H}(G,\rho) \to m_L(\tau) \; \middle|  \;
\begin{varwidth}{\linewidth}
$h\psi(h_1)= \psi(T_P(h)h_1)$ where\\
$h \in \mathcal{H}(L,\rho_0),h_1 \in \mathcal{H}(G,\rho)$
\end{varwidth}
\right \rbrace.
\]\par

From the above commutative diagram, we see that $(T_P)_*(m_L(\tau)) \cong m_G(\iota_P^G(\tau))$ as $\mathcal{H}(G, \rho)$-modules. Recall that $(t_P)_*(m_L(\tau \otimes \delta_P^{1/2})) \cong m_G(\iota_P^G(\tau))$ as $\mathcal{H}(G, \rho)$-modules. Hence we have to find an algebra embedding  $T_P \colon \mathcal{H}(L,\rho_0) \longrightarrow \mathcal{H}(G, \rho)$ such that 
$(T_P)_*(m_L(\tau)) \cong (t_P)_*(m_L(\tau \otimes \delta_P^{1/2}))$ as $\mathcal{H}(G, \rho)$-modules.\par

\begin{proposition}\label{pro_100}
	The map $T_P$ is given by $T_P(\phi)= t_P(\phi \delta_P^{-1/2})$ for $\phi \in \mathcal{H}(L, \rho_0)$ so that we have $(T_P)_*(m_L(\tau))= (t_P)_*(m_L(\tau \otimes \delta_P^{1/2}))$ as $\mathcal{H}(G,\rho)$- modules.
\end{proposition}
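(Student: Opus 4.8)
The plan is to deduce the statement from the commutative square for $t_P$ and $Ind_P^G$ recalled above, by a twisting argument on $\mathcal{H}(L,\rho_0)$-modules.

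First, observe that $\delta_P \colon P \longrightarrow \mathbb{R}^{\times}_{>0}$ restricts trivially to the compact group $\mathfrak{P}_0$, so its positive square root $\delta_P^{1/2}$ is a well-defined character of $L$ with $\delta_P^{1/2}|_{\mathfrak{P}_0}=1$. For any character $\chi$ of $L$ trivial on $\mathfrak{P}_0$, the map $\theta_\chi \colon \mathcal{H}(L,\rho_0) \longrightarrow \mathcal{H}(L,\rho_0)$, $(\theta_\chi \gamma)(l) = \chi(l)\gamma(l)$, is a $\mathbb{C}$-algebra automorphism; this is immediate from the convolution formula together with the fact that $\chi$ is multiplicative. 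In particular $T_P = t_P \circ \theta_{\delta_P^{-1/2}}$, that is, $T_P(\phi) = t_P(\phi\delta_P^{-1/2})$, is a composition of an algebra embedding and an algebra automorphism, hence is itself an embedding of algebras.

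Next I would identify $m_L(\tau\otimes\delta_P^{1/2})$ with a twist of $m_L(\tau)$. Since $\delta_P^{1/2}$ is trivial on $\mathfrak{P}_0$, the spaces $\mathrm{Hom}_{\mathfrak{P}_0}(\rho_0,\tau\otimes\delta_P^{1/2})$ and $\mathrm{Hom}_{\mathfrak{P}_0}(\rho_0,\tau)$ are literally the same set of linear maps. Comparing the $\mathcal{H}(L,\rho_0)$-action $(\gamma.f)(w) = \int_L \tau(l)\delta_P^{1/2}(l) f(\gamma^\vee(l^{-1})w)\,dl$ on the former with $(\gamma.f)(w) = \int_L \tau(l) f(\gamma^\vee(l^{-1})w)\,dl$ on the latter, and using the identity $(\theta_{\delta_P^{1/2}}\gamma)^\vee(l^{-1}) = \delta_P^{1/2}(l)\,\gamma^\vee(l^{-1})$, one sees that $m_L(\tau\otimes\delta_P^{1/2})$ is exactly $m_L(\tau)$ with its $\mathcal{H}(L,\rho_0)$-action precomposed with $\theta_{\delta_P^{1/2}}$.

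Finally I would chase the definition of $(t_P)_*$. By definition, $(t_P)_*(m_L(\tau\otimes\delta_P^{1/2}))$ is the set of additive maps $\psi \colon \mathcal{H}(G,\rho) \to m_L(\tau)$ satisfying $\theta_{\delta_P^{1/2}}(h)\cdot\psi(h_1) = \psi(t_P(h)h_1)$ for all $h \in \mathcal{H}(L,\rho_0)$ and $h_1 \in \mathcal{H}(G,\rho)$, where $\cdot$ now denotes the original action on $m_L(\tau)$. Replacing $h$ by $\theta_{\delta_P^{-1/2}}(h)$, which is legitimate because $\theta_{\delta_P^{-1/2}}$ is bijective, turns this into $h\cdot\psi(h_1) = \psi(t_P(\theta_{\delta_P^{-1/2}}(h))h_1) = \psi(T_P(h)h_1)$, which is exactly the defining relation of $(T_P)_*(m_L(\tau))$. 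Since on both sides the $\mathcal{H}(G,\rho)$-module structure is given by the same formula $h'\psi(h_1) = \psi(h_1 h')$, this is an equality of $\mathcal{H}(G,\rho)$-modules, which together with the isomorphism $(t_P)_*(m_L(\tau\otimes\delta_P^{1/2})) \cong m_G(\iota_P^G\tau)$ noted earlier yields the commutativity of the target diagram. The one point requiring care is the bookkeeping with the contragredient $\gamma^\vee$ in the $m_L$-action, and in particular getting the sign of the twist right: it must be $\delta_P^{-1/2}$ in $T_P$ precisely so that, after the substitution, the factor $\delta_P^{1/2}$ coming from $\tau\otimes\delta_P^{1/2}$ is cancelled. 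Everything else is a routine unwinding of the Bushnell--Kutzko formalism recalled above.
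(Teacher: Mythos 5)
Your proof is correct and follows essentially the same route as the paper's: both identify $m_L(\tau\otimes\delta_P^{1/2})$ with $m_L(\tau)$ as vector spaces and use linearity of $f'$ together with $\gamma^\vee(l^{-1})=\gamma(l)^\vee$ to show the $\mathcal{H}(L,\rho_0)$-action is twisted by $\delta_P^{1/2}$. The only difference is that you package the twist as an explicit algebra automorphism $\theta_\chi$ and then chase the definition of $(t_P)_*$ through the substitution $h\mapsto\theta_{\delta_P^{-1/2}}(h)$ to obtain the module identification — a step the paper leaves implicit in its final sentence — so yours is, if anything, a little more complete.
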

\begin{proof}
	Let $W$ be space of $\rho_0$. The vector spaces for $m_L(\tau \delta_P^{1/2})$ and $m_L(\tau)$ are the same. Let $f \in m_L(\tau)= \mathrm{Hom}_{\mathfrak{P}_0}(\rho_0, \tau),\gamma \in \mathcal{H}(L,\rho_0)$ and $w \in W$. Recall the action of $\mathcal{H}(L,\rho_0)$ on $m_L(\tau)$ is given by
	\[(\gamma. f)(w)= \int_L \tau(l)f(\gamma^{\vee}(l^{-1})w)dl.\] \par
	
	Let $f' \in m_L(\tau \delta_P^{1/2})= \mathrm{Hom}_{\mathfrak{P}_0}(\rho_0, \tau\delta_P^{1/2}),\gamma \in \mathcal{H}(L,\rho_0)$ and $w \in W$. Recall the action of $\mathcal{H}(L,\rho_0)$ on $m_L(\tau \delta_P^{1/2})$ is given by  \[(\gamma. f')(w)= \int_L  (\tau\delta_P^{1/2})(l)f'(\gamma^{\vee}(l^{-1})w)dl= \int_L \tau(l)\delta_P^{1/2}(l)f'(\gamma^{\vee}(l^{-1})w)dl.\] Now $f'$ is a linear transformation from space of $\rho_0$ to space of $\tau\delta_P^{1/2}$. As $\delta_P^{1/2}(l) \in \mathbb{C}^{\times}$, so $\delta_P^{1/2}(l)f'(\gamma^{\vee}(l^{-1})w)=f'(\delta_P^{1/2}(l)\gamma^{\vee}(l^{-1})w)$. Hence we have
	
	\[(\gamma. f')(w)= \int_L \tau(l)f'(\delta_P^{1/2}(l)\gamma^{\vee}(l^{-1})w)dl=  \int_L \tau(l)f'(\delta_P^{1/2}(l)\gamma(l)^{\vee}w)dl.\] Further as $\delta_P^{1/2}(l) \in \mathbb{C}^{\times}$, so  $\delta_P^{1/2}(l)(\gamma(l))^{\vee}=(\delta_P^{1/2}\gamma)(l)^{\vee}$. Therefore
	\[(\gamma.f')(w)= \int_L \tau(l)f'((\delta_P^{1/2}\gamma)(l)^{\vee}w)dl= (\delta_P^{1/2}\gamma).f'(w).\] Hence we can conclude that the action of $\gamma \in \mathcal{H}(L, \rho_0)$ on $f' \in m_L(\tau\delta_P^{1/2})$ is same as the action of $\delta_P^{1/2}\gamma \in \mathcal{H}(L, \rho_0)$  on $f' \in m_L(\tau)$. So we have  $(T_P)_*(m_L(\tau))= (t_P)_*(m_L(\tau \otimes \delta_P^{1/2}))$ as $\mathcal{H}(G,\rho)$- modules.
\end{proof}\par

From Proposition ~\ref{pro_100}, $T_P(\alpha)=t_P(\alpha \delta_P^{-1/2})$. So we have,

\begin{align*}
T_P(\alpha)&=t_P(\alpha \delta_P^{-1/2})\\
&= t(\alpha \delta_P^{-1/2}\delta_P)\\
&= t(\alpha \delta_P^{1/2})\\
&= T^+(\alpha \delta_P^{1/2}).
\end{align*}\par

Hence
\begin{align*}
T_P(\alpha)(\zeta)&= T^+(\alpha \delta_P^{1/2})(\zeta)\\
&=\delta_P^{1/2}(\zeta)T^+(\alpha)(\zeta)\\
&=\delta_P^{1/2}(\zeta)\alpha(\zeta)\\
&=\delta_P^{1/2}(\zeta)1_{W^{\vee}}.
\end{align*}

Thus $T_P(\alpha)(\zeta)= \delta_P^{1/2}(\zeta)1_{W^{\vee}}$ with $\text{supp}(T_P(\alpha))=\text{supp}(t_P(\alpha))=\mathfrak{P}\zeta\mathfrak{P}$.
\par

\subsection{Depth zero supercuspidal representations}

Suppose $\tau$ is an irreducible cuspidal representation of $\mathrm{GL}_n(k_E)$ inflated to a representation of $\mathrm{GL}_n(\mathfrak{O}_E)= K_0$. Then let $\widetilde{K_0}= ZK_0$ where $Z=Z(\mathrm{GL}_n(E))=\{\lambda \,1_n\mid \lambda \in E^{\times}\}$. As any element of $E^{\times}$ can be written as $u\varpi_E^n$ for some $u \in \mathfrak{O}_E^{\times}$ and $m \in \mathbb{Z}$. So in fact, $\widetilde{K_0}= <\varpi_E 1_n>K_0$. \par

Let $(\lambda, V)$ be a smooth irreducible supercuspidal representation of $\mathrm{GL}_n(E)$ such that $\lambda|_{K_0}=\tau$. Set $1_V$ to be the identity linear transformation of $V$. As $\varpi_E 1_n \in Z$, so $\lambda(\varpi_E 1_n)= \omega_{\lambda}(\varpi_E 1_n) 1_V$ where $\omega_{\lambda}\colon Z \longrightarrow \mathbb{C}^{\times}$ is the central character of $\lambda$.\par

Let $\widetilde\tau$ be a representation of $\widetilde K_0$ such that:
\begin{enumerate} 
	\item $\widetilde\tau(\varpi_E 1_n)=\omega_{\lambda}(\varpi_E 1_n) 1_V,$
	\item $\widetilde\tau|_{K_0}= \tau.$
\end{enumerate}
\par

Say $\omega_{\lambda}(\varpi_E 1_n)= z$ where $z \in \mathbb{C^{\times}}$. Now call $\widetilde\tau = \widetilde\tau_z$. We have extended $\tau$ to $\widetilde\tau_z$ which is a representation of $\widetilde K_0$, so that $Z$ acts by $\omega_\lambda$. Hence $\lambda|_{\widetilde K_0} \supseteq\widetilde \tau_z$ which implies that $\mathrm{Hom}_{\widetilde K_0} (\widetilde \tau_z, \lambda|_{\widetilde K_0}) \neq 0$.\par

By Frobenius reciprocity for induction from open subgroups, 

\begin{center}
	$\mathrm{Hom}_{\widetilde K_0} (\widetilde \tau_z, \lambda|_{\widetilde K_0}) \simeq \mathrm{Hom}_{\mathrm{GL}_n(E)}(c$-$Ind_{\widetilde K_0}^{\mathrm{GL}_n(E)} \widetilde\tau_z, \lambda)$.
\end{center}

Thus $\mathrm{Hom}_{\mathrm{GL}_n(E)}(c$-$Ind_{\widetilde K_0}^{\mathrm{GL}_n(E)} \widetilde\tau_z, \lambda) \neq 0$. So there exists a non-zero $\mathrm{GL}_n(E)$-map from $c$-$Ind_{\widetilde K_0}^G \widetilde\tau_z$ to $\lambda$. As $\tau$ is cuspidal representation, using Cartan decompostion and Mackey's criteria we can show that $c$-$Ind_{\widetilde K_0}^{\mathrm{GL}_n(E)} \widetilde\tau_z$ is irreducible. So $\lambda \simeq c$-$Ind_{\widetilde K_0}^{\mathrm{GL}_n(E)} \widetilde\tau_z$. As $c$-$Ind_{\widetilde K_0}^{\mathrm{GL}_n(E)} \widetilde\tau_z$ is irreducible supercuspidal representation of $\mathrm{GL}_n(E)$ of depth zero, so $\lambda$ is irreducible supercuspidal representation of $\mathrm{GL}_n(E)$ of depth zero.\par

Conversely, let $\lambda$ is a irreducible, supercuspidal, depth zero representation of  $\mathrm{GL}_n(E)$. So $\lambda^{K_1} \neq \{0\}$. Hence $\lambda|_{K_1} \supseteq 1_{K_1}$, where $1_{K_1}$ is trivial representation of $K_1$. This means $\lambda|_{K_0} \supseteq \tau$, where $\tau$ is an irreducible representation of $K_0$ such that $\tau|_{K_1} \supseteq 1_{K_1}$. So $\tau$ is trivial on $K_1$. So $\lambda|_{K_0}$ contains an irreducible representation $\tau$ of $K_0$ such that $\tau|_{K_1}$ is trivial. So $\tau$ can be viewed as an irreducible representation of $K_0/K_1\cong \mathrm{GL}_n(k_E)$  inflated to $K_0=\mathrm{GL}_n(\mathfrak{O}_E)$. The representation $\tau$ is cuspidal by (a very special case of) A.1 Appendix \cite{MR1235019}.\par

So we have the following bijection of sets:

\[
\left\lbrace 
\begin{varwidth}{\linewidth}
Isomorphism classes of irreducible \\ cuspidal 
representations of $\mathrm{GL}_n(k_E)$
\end{varwidth}
\right \rbrace		
\times \mathbb{C^{\times}} \longleftrightarrow 
\left\lbrace 
\begin{varwidth}{\linewidth}
Isomorphism classes \\of irreducible  \\
supercuspidal \\representations of \\ 
$\mathrm{GL}_n(E)$ of depth zero

\end{varwidth}
\right \rbrace.
\]
\par

\[(\tau, z)  \xrightarrow{\hspace*{6cm}}  c-Ind_{\widetilde K_0}^{\mathrm{GL}_n(E)} \widetilde\tau_z \]

\[(\tau, \omega_\lambda(\varpi_E 1_n)) \xleftarrow{\hspace*{6cm}}  \lambda \]\par

Recall that $\pi$ is an irreducible supercuspidal depth zero representation of $L \cong \mathrm{GL}_n(E)\times \mathrm{U}_1(E)$. So $\pi= \lambda \chi$ where $\lambda$ is an irreducible supercuspidal depth zero representation of $\mathrm{GL}_n(E)$ and $\chi$ is an irreducible supercuspidal depth zero character of $\mathrm{U}_1(E)$. From now on we denote the representation $\tau \chi$ by $\rho_0$. So $\rho_0$ is an irreducible cuspidal representation of $\mathrm{GL}_n(k_E) \times \mathrm{U}_1(k_E)$ inflated to $K_0 \times \mathrm{U}_1(\mathfrak{O}_E)$ where $K_0=\mathrm{GL}_n(\mathfrak{O}_E)$. Recall that we can extend $\rho_0$ to a representation $\widetilde{\rho_0}$ of $Z(L)\mathfrak{P}_0= \coprod_{n \in \mathbb{Z}} \mathfrak{P}_0 \zeta^n$ via $\widetilde{\rho_0}(\zeta^k j)=\rho_0(j)$ for $j \in \mathfrak{P}_0, k \in \mathbb{Z}$. Also observe that as $\lambda= c$-$Ind_{\widetilde K_0}^{\mathrm{GL}_n(E)}\widetilde{\tau}$, so $\pi= \lambda \chi \simeq c$-$Ind_{Z(L)\mathfrak{P}_0}^L \widetilde{\rho_0}$.

\section{Representation $\rho$ of $\mathfrak{P}$} \label{sec_3}

 Let $V$ be the vector space associated with $\rho_0$. Now $\rho_0$ is extended to a map $\rho$ from $\mathfrak{P}$ to $GL(V)$ as follows. By Iwahori factorization, if $j \in \mathfrak{P}$ then $j$ can be written as $j_{-}j_{0}j_{+}$, where $j_{-}\in \mathfrak{P}_{-}, j_{+}\in \mathfrak{P}_{+}, j_{0}\in \mathfrak{P}_{0}$. Now the map $\rho$ on $\mathfrak{P}$ is defined as $\rho(j)= \rho_0(j_0)$.\par 
					
\begin{proposition}\label{pro_11}
$\rho$ is a homomorphism from $\mathfrak{P}$ to $GL(V)$. So $\rho$  becomes a representation of $\mathfrak{P}$.
\end{proposition}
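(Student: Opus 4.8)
The plan is to verify directly that $\rho(j_1 j_2) = \rho(j_1)\rho(j_2)$ for all $j_1, j_2 \in \mathfrak{P}$, using the Iwahori factorization $\mathfrak{P} = \mathfrak{P}_- \mathfrak{P}_0 \mathfrak{P}_+$ together with the fact that $\mathfrak{P}_0$ normalizes both $\mathfrak{P}_-$ and $\mathfrak{P}_+$ (so that $\mathfrak{P}_- \mathfrak{P}_0$ and $\mathfrak{P}_0 \mathfrak{P}_+$ are groups), and the crucial commutation relation that, for $x \in \mathfrak{P}_+$ and $y \in \mathfrak{P}_-$, the product $xy$ can be rewritten as $y' j_0' x'$ with $y' \in \mathfrak{P}_-$, $j_0' \in \mathfrak{P}_0$, $x' \in \mathfrak{P}_+$. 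The point of the definition $\rho(j) = \rho_0(j_0)$ is that the ``middle'' $\mathfrak{P}_0$-component behaves multiplicatively modulo the unipotent parts, on which $\rho$ is declared trivial.

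First I would establish uniqueness of the Iwahori factorization: the map $\mathfrak{P}_- \times \mathfrak{P}_0 \times \mathfrak{P}_+ \to \mathfrak{P}$, $(j_-, j_0, j_+) \mapsto j_- j_0 j_+$, is a bijection, so that $\rho$ is well-defined; this follows from the explicit block-matrix descriptions given above, since the $(1,1)$-block, the central $(2,2)$-entry, and the strictly-lower/strictly-upper entries determine the three factors. Next I would record the normalization facts: $j_0 \mathfrak{P}_{\pm} j_0^{-1} = \mathfrak{P}_{\pm}$ for $j_0 \in \mathfrak{P}_0$ (immediate from the matrix forms, noting that conjugation by $\mathrm{GL}_n(\mathfrak{O}_E) \times \mathrm{U}_1(\mathfrak{O}_E)$ preserves the integrality conditions defining $\mathfrak{P}_+$ and the $\mathbf{p}_E$-conditions defining $\mathfrak{P}_-$), and likewise $\mathfrak{P}_+ \mathfrak{P}_+ \subseteq \mathfrak{P}_+$, $\mathfrak{P}_- \mathfrak{P}_- \subseteq \mathfrak{P}_-$. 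The heart of the argument is then the commutation lemma: for $x \in \mathfrak{P}_+$, $y \in \mathfrak{P}_-$ one has $xy \in \mathfrak{P}_- \mathfrak{P}_0 \mathfrak{P}_+$; I would prove this by a direct computation with the $(2n+1) \times (2n+1)$ block matrices, checking that the resulting $\mathfrak{P}_0$-component lies in $\mathfrak{P}_0$ (this is where the mild asymmetry of $\mathfrak{P}$ — upper-triangular blocks in $\mathfrak{O}_E$, lower-triangular blocks in $\mathbf{p}_E$ — is used to guarantee the diagonal $\mathrm{GL}_n$-block of the product is actually in $\mathrm{GL}_n(\mathfrak{O}_E)$, being $\equiv Id_n \pmod{\mathbf{p}_E}$, and similarly the central $\mathrm{U}_1$-entry lies in $\mathfrak{O}_E^\times$). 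Granting this, given $j_1 = a_- a_0 a_+$ and $j_2 = b_- b_0 b_+$, I would slide $a_+ b_-$ past each other to get $a_+ b_- = c_- c_0 c_+$, then collect: $j_1 j_2 = a_- a_0 c_- c_0 c_+ b_0 b_+ = a_- (a_0 c_- a_0^{-1}) (a_0 c_0 b_0) ((b_0^{-1} c_+ b_0) b_+)$, which is already in Iwahori-factored form with $\mathfrak{P}_0$-component $a_0 c_0 b_0$; hence $\rho(j_1 j_2) = \rho_0(a_0 c_0 b_0) = \rho_0(a_0)\rho_0(c_0)\rho_0(b_0)$ using that $\rho_0$ is a genuine homomorphism on $\mathfrak{P}_0$.

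It remains to check that $\rho_0(c_0) = 1_V$, i.e.\ that the $\mathfrak{P}_0$-component $c_0$ arising from $a_+ b_- = c_- c_0 c_+$ is killed by $\rho_0$. This is automatic from the depth-zero hypothesis: by the congruence conditions in the commutation lemma, $c_0$ lies in $\mathfrak{P}_0 \cap (Id + \varpi_E \mathrm{M}_{2n+1}(\mathfrak{O}_E))$, i.e.\ its $\mathrm{GL}_n$-block lies in $K_1 = Id_n + \varpi_E\mathrm{M}_n(\mathfrak{O}_E)$ and its $\mathrm{U}_1$-entry lies in $\mathrm{U}_1(1 + \mathbf{p}_E)$, and $\rho_0 = \tau \chi$ is trivial on exactly this subgroup (since $\tau$ is inflated from $\mathrm{GL}_n(k_E)$ and $\chi|_{\mathrm{U}_1(1+\mathbf{p}_E)} = 1$). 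Therefore $\rho(j_1 j_2) = \rho_0(a_0)\rho_0(b_0) = \rho(j_1)\rho(j_2)$, and $\rho(Id) = \rho_0(Id) = 1_V$, so $\rho$ is a homomorphism $\mathfrak{P} \to GL(V)$, hence a representation.

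I expect the main obstacle to be the commutation lemma $\mathfrak{P}_+\mathfrak{P}_- \subseteq \mathfrak{P}_-\mathfrak{P}_0\mathfrak{P}_+$ together with the tracking of which congruence class the $\mathfrak{P}_0$-component lands in — this requires an honest block-matrix computation in $\mathrm{U}(n,n+1)$, keeping careful account of the defining relation $X + {}^t\overline{X} + u\,{}^t\overline{u} = 0$ and of the $\mathbf{p}_E$ versus $\mathfrak{O}_E$ asymmetry in the definition of $\mathfrak{P}$; once that is in hand, the rest is bookkeeping with the normalization properties of $\mathfrak{P}_0$ and the depth-zero triviality of $\rho_0$ on the principal congruence subgroup.
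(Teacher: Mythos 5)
Your argument is correct: the Iwahori factorization $\mathfrak{P}_-\times\mathfrak{P}_0\times\mathfrak{P}_+\to\mathfrak{P}$ is a bijection (so $\rho$ is well defined), $\mathfrak{P}_0$ normalizes $\mathfrak{P}_\pm$, and the commutation fact $\mathfrak{P}_+\mathfrak{P}_-\subseteq\mathfrak{P}_-\mathfrak{P}_0\mathfrak{P}_+$ with the $\mathfrak{P}_0$-component congruent to the identity modulo $\mathbf{p}_E$ is exactly what is needed; once $\rho_0$ is seen to kill that component (because $\rho_0=\tau\chi$ is inflated from $\mathrm{GL}_n(k_E)\times \mathrm{U}_1(k_E)$), your collection step closes up. There is, however, a shorter route, which is evidently what the paper has in mind (Section 6.1 states that $\overline{\rho}$ ``when inflated to $\mathfrak{P}$ is given by $\rho$'') and is presumably the content of the cited Proposition 5 of \cite{sandeep}. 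The reduction map $r\colon\mathfrak{P}\to\mathsf{P}$, $g\mapsto g\bmod \mathbf{p}_E$, is a surjective group homomorphism under which $\mathfrak{P}_-\mapsto\{1\}$ (its off-diagonal entries lie in $\mathbf{p}_E$), $\mathfrak{P}_0\twoheadrightarrow\mathsf{L}$ and $\mathfrak{P}_+\twoheadrightarrow\mathsf{U}$. Letting $\overline{\rho}$ be the inflation of $\rho_0$ through $\mathsf{P}=\mathsf{L}\ltimes\mathsf{U}\twoheadrightarrow\mathsf{L}$ (a genuine representation of $\mathsf{P}$, trivial on $\mathsf{U}$), the composite $\overline{\rho}\circ r$ is automatically a homomorphism $\mathfrak{P}\to GL(V)$, and for $j=j_-j_0j_+$ one computes $(\overline{\rho}\circ r)(j)=\overline{\rho}\bigl(r(j_0)r(j_+)\bigr)=\overline{\rho}(r(j_0))=\rho_0(j_0)=\rho(j)$, so $\rho=\overline{\rho}\circ r$. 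Your ``commutation lemma'' is precisely the statement that $r(a_+b_-)\in\mathsf{U}$, unwound through the uniqueness of the Iwahori factorization; the two arguments are equivalent, but the inflation picture avoids the block-matrix bookkeeping and makes it transparent that the only input from the depth-zero hypothesis is the triviality of $\rho_0$ on $\ker(r|_{\mathfrak{P}_0})=K_1\times \mathrm{U}_1(1+\mathbf{p}_E)$.
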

					
\begin{proof}
The proof goes in similar lines as Proposition 5 in \cite{sandeep}.
\end{proof}

\section{Calculation of $N_G(\mathfrak{P}_0)$} \label{sec_4}

We set $G=\mathrm{U}(n,n+1)$. To describe $\mathcal{H}(G,\rho)$ we need to determine $N_G(\rho_0)$ which is given by
\[ N_G(\rho_0)=\{ m \in N_G(\mathfrak{P}_0) \mid {\rho_0}\simeq \rho_0^m \}.\]  

Further, to find out $N_G(\rho_0)$ we need to determine $N_G(\mathfrak{P}_0)$. To that end we shall calculate $N_{\mathrm{GL}_n(E)}(K_0)$. Let $Z= Z(\mathrm{GL}_n(E))$. So $Z=\{\lambda 1_n \mid \lambda \in E^{\times} \}$.

\begin{lemma}\label{Normalizer_of_K_0_in_GL_n(E)} 
	$N_{\mathrm{GL}_n(E)}(K_0)= K_0Z$. 
\end{lemma}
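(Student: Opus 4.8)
The plan is to prove the two inclusions of $N_{\mathrm{GL}_n(E)}(K_0) = K_0 Z$ separately. The inclusion $K_0 Z \subseteq N_{\mathrm{GL}_n(E)}(K_0)$ is immediate: $K_0$ normalises itself, and the elements of $Z$ are scalar matrices, hence central in $\mathrm{GL}_n(E)$, so they normalise every subgroup. All the work is in the reverse inclusion.

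For the reverse inclusion I would take an arbitrary $g \in N_{\mathrm{GL}_n(E)}(K_0)$ and apply the Cartan (Smith normal form) decomposition $\mathrm{GL}_n(E) = K_0 \, D \, K_0$, where $D$ is the set of diagonal matrices $d = \mathrm{diag}(\varpi_E^{a_1}, \ldots, \varpi_E^{a_n})$ with $a_1 \geq a_2 \geq \cdots \geq a_n$. Writing $g = k_1 d k_2$ with $k_1, k_2 \in K_0$ and using that $k_1, k_2$ already normalise $K_0$, the condition $g K_0 g^{-1} = K_0$ is equivalent to $d K_0 d^{-1} = K_0$. So it suffices to show that a diagonal matrix $d$ of this form can normalise $K_0$ only if it is a scalar matrix, i.e. only if all the $a_i$ coincide; then $d = \varpi_E^{a} 1_n \in Z$ and $g = k_1 d k_2 \in K_0 Z$.

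To pin down the exponents I would test $d K_0 d^{-1} = K_0$ against the elementary unipotent elements $1_n + x\, e_{ij} \in K_0$ (here $e_{ij}$ is the matrix unit and $x \in \mathfrak{O}_E$, $i \neq j$). Conjugation by $d$ sends $1_n + x\, e_{ij}$ to $1_n + \varpi_E^{a_i - a_j} x\, e_{ij}$, so $d K_0 d^{-1} \subseteq K_0$ forces $a_i \geq a_j$ for all $i \neq j$, while $d^{-1} K_0 d \subseteq K_0$ forces $a_j \geq a_i$; taking $i = 1$, $j = n$ already yields $a_1 = a_n$, and combined with $a_1 \geq a_2 \geq \cdots \geq a_n$ this gives $a_1 = \cdots = a_n$, as desired.

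I do not expect any genuine obstacle here — this is the standard computation of the normaliser of the standard maximal compact subgroup of $\mathrm{GL}_n$ over a non-Archimedean local field. The only points worth flagging are that it is the two-sided condition $d K_0 d^{-1} = K_0$ (not merely $d K_0 d^{-1} \subseteq K_0$) that produces inequalities in both directions and thereby forces all exponents equal, and that one need not separately analyse $\det g$ since the central torus $Z$ already absorbs any valuation. (An alternative route, slightly more structural, would observe that $K_0$ is, up to the centre, the full stabiliser of its vertex in the building of $\mathrm{PGL}_n(E)$; but the Cartan-decomposition argument above is the most elementary.)
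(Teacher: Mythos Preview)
Your proposal is correct and follows exactly the approach indicated in the paper, which simply records that the result ``follows from the Cartan decomposition by a direct matrix calculation.'' You have supplied precisely that calculation: reduce via $g = k_1 d k_2$ to the diagonal case and test $d K_0 d^{-1} = K_0$ on elementary unipotents to force all exponents equal.
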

\begin{proof}
	This follows from the Cartan decomposition by a direct matrix calculation.
\end{proof}\par

Now let us calculate $N_G(\mathfrak{P}_0)$. Note that 
$J=
\begin{bmatrix}
	0 &0 &Id_{n}\\
	0& 1 & 0 \\
	Id_{n} & 0 & 0
	\end {bmatrix} \in G$. Indeed, $J \in N_G(\mathfrak{P}_0)$. The center $Z(\mathfrak{P}_0)$ of $\mathfrak{P}_0$ is given by
\[
Z(\mathfrak{P}_0)=\Bigg\lbrace
\begin{bmatrix}
uId_n  & 0 &0\\
0 & \lambda &0\\
0    & 0 & \overline{u}^{-1}Id_n \\
\end{bmatrix} \mid u \in \mathfrak{O}_E^{\times}, \lambda \in \mathfrak{O}_E^{\times}, \lambda\overline{\lambda}=1 \Bigg \rbrace.
\]

Recall the center $Z(L)$ of $L$ is given by
\[
Z(L)=\Bigg\lbrace
\begin{bmatrix}
aId_n  & 0 &0\\
0 & \lambda &0\\
0    & 0 & \overline{a}^{-1}Id_n \\
\end{bmatrix} \mid a \in E^{\times}, \lambda \in E^{\times},  \lambda\overline{\lambda}=1 \Bigg \rbrace.
\]

\begin{proposition}\label{pro_12}
	$N_G(\mathfrak{P}_0)=\left<\mathfrak{P}_0 Z(L), J \right>= \mathfrak{P}_0Z(L) \rtimes \left<J \right>$.
\end{proposition}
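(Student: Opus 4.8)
The plan is to prove the two inclusions of the first equality separately, and then read off the semidirect-product decomposition. The inclusion $\langle \mathfrak{P}_0 Z(L), J\rangle \subseteq N_G(\mathfrak{P}_0)$ is immediate: $Z(L)$ centralizes $L$ and hence normalizes $\mathfrak{P}_0\subseteq L$, the group $\mathfrak{P}_0$ normalizes itself, and a one-line block computation shows that conjugation by $J$ sends $\mathrm{diag}(a,\lambda,{}^t\overline a^{-1})$ to $\mathrm{diag}({}^t\overline a^{-1},\lambda,a)$, which again lies in $\mathfrak{P}_0$ since $a\mapsto {}^t\overline a^{-1}$ preserves $\mathrm{GL}_n(\mathfrak{O}_E)$; thus $J\in N_G(\mathfrak{P}_0)$. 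Since $J^2=\mathrm{Id}$ and $J\notin L$ while $\mathfrak{P}_0 Z(L)\subseteq L$, we get $\langle \mathfrak{P}_0 Z(L),J\rangle=\mathfrak{P}_0 Z(L)\sqcup \mathfrak{P}_0 Z(L)J$, with $\mathfrak{P}_0 Z(L)$ a subgroup of index $2$; so it will suffice to prove the reverse inclusion.

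The key device for the reverse inclusion is to replace $\mathfrak{P}_0$ by the split torus it determines. Set $S=\{\mathrm{diag}(tId_n,1,t^{-1}Id_n)\mid t\in F^\times\}$, a one-dimensional $F$-split torus with $S\subseteq Z(L)$. Two centralizer computations — both of the elementary block-matrix type used for Lemma \ref{Normalizer_of_K_0_in_GL_n(E)} — are needed: first, $C_G(S)=L$ (an element of $G$ commuting with every $\mathrm{diag}(tId_n,1,t^{-1}Id_n)$ must be block diagonal, and the unitary relation ${}^t\overline g Jg=J$ then puts it in $L$); second, $C_G(\mathfrak{P}_0)=Z(L)$, where for the inclusion $\subseteq$ one uses the copy of $\mathrm{GL}_n(\mathfrak{O}_E)$ inside $\mathfrak{P}_0$ as $\{\mathrm{diag}(a,1,{}^t\overline a^{-1})\mid a\in \mathrm{GL}_n(\mathfrak{O}_E)\}$ to force a commuting element to be block-scalar, then invokes ${}^t\overline gJg=J$ together with the fact that $\{\lambda\in\mathfrak{O}_E^\times\mid\lambda\overline\lambda=1\}$ is a proper subset of $\mathfrak{O}_E^\times$.

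Granting these two facts, the argument runs as follows. Given $g\in N_G(\mathfrak{P}_0)$, we have $\mathfrak{P}_0=g\mathfrak{P}_0 g^{-1}\subseteq gLg^{-1}$, hence $\mathfrak{P}_0\subseteq L\cap gLg^{-1}=C_G(S)\cap C_G(gSg^{-1})=C_G(\langle S,gSg^{-1}\rangle)$; in particular $gSg^{-1}\subseteq C_G(\mathfrak{P}_0)=Z(L)$. Now $gSg^{-1}$ is a one-dimensional $F$-split subtorus of the torus $Z(L)$, whose unique maximal $F$-split subtorus is $S$ itself (this is the one point where the anisotropy of $\mathrm{U}_1$ genuinely enters); as both are connected and one-dimensional, $gSg^{-1}=S$, i.e.\ $g\in N_G(S)$. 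Conjugation gives a map $N_G(S)\to\mathrm{Aut}(S)\cong\mathbb Z/2$ with kernel $C_G(S)=L$, and $J$ maps to the nontrivial automorphism $t\mapsto t^{-1}$, so $N_G(S)=L\sqcup LJ$. Therefore $N_G(\mathfrak{P}_0)=\big(N_G(\mathfrak{P}_0)\cap L\big)\sqcup\big(N_G(\mathfrak{P}_0)\cap LJ\big)$. The first piece is $N_L(\mathfrak{P}_0)$, and since $\mathfrak{P}_0\cong \mathrm{GL}_n(\mathfrak{O}_E)\times\mathrm{U}_1(E)$ with $\mathrm{U}_1(E)=\mathrm{U}_1(\mathfrak{O}_E)$, Lemma \ref{Normalizer_of_K_0_in_GL_n(E)} gives $N_L(\mathfrak{P}_0)=\big(\mathrm{GL}_n(\mathfrak{O}_E)\,Z(\mathrm{GL}_n(E))\big)\times\mathrm{U}_1(E)=\mathfrak{P}_0 Z(L)$; and because $J\in N_G(\mathfrak{P}_0)$, any $gJ$ in the second piece has $g=(gJ)J^{-1}\in N_G(\mathfrak{P}_0)\cap L=\mathfrak{P}_0 Z(L)$, so the second piece is exactly $\mathfrak{P}_0 Z(L)J$. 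Hence $N_G(\mathfrak{P}_0)=\mathfrak{P}_0 Z(L)\sqcup\mathfrak{P}_0 Z(L)J=\langle\mathfrak{P}_0 Z(L),J\rangle$, and since $\mathfrak{P}_0 Z(L)$ is normal of index $2$ with $J^2=\mathrm{Id}$ and $J\notin\mathfrak{P}_0 Z(L)$, this is $\mathfrak{P}_0 Z(L)\rtimes\langle J\rangle$.

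The main obstacle here is not conceptual but bookkeeping: carrying out the two block-matrix centralizer computations $C_G(S)=L$ and $C_G(\mathfrak{P}_0)=Z(L)$ cleanly inside $\mathrm{GL}_{2n+1}(E)$ under the constraint ${}^t\overline g Jg=J$, and checking that the degenerate cases (notably $n=1$) create no exceptions. The one genuinely structural input is that the maximal $F$-split subtorus of $Z(L)$ is the one-dimensional torus $S$, which rests on $\mathrm{U}_1$ being anisotropic; everything else in the argument is formal group theory, and the ramified and unramified cases are handled uniformly.
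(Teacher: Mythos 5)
Your argument is correct, and it appears to take a genuinely different route from the one the paper intends. The paper defers the proof to Proposition 6 of the companion $\mathrm{U}(n,n)$ paper and signals the method only through its supporting Lemma~\ref{Normalizer_of_K_0_in_GL_n(E)}, which is established ``from the Cartan decomposition by a direct matrix calculation''; the intended proof of Proposition~\ref{pro_12} is thus evidently a hands-on computation inside $G$. You replace that with a structural argument: with $S$ the maximal $F$-split central torus of $L$, the two centralizer identities $C_G(S)=L$ and $C_G(\mathfrak{P}_0)=Z(L)$ force $gSg^{-1}\subseteq Z(L)$ for any $g\in N_G(\mathfrak{P}_0)$, the uniqueness of the maximal $F$-split subtorus of $Z(L)$ (here the anisotropy of $\mathrm{U}_1$ genuinely enters) gives $gSg^{-1}=S$, and $N_G(S)=L\sqcup LJ$ then reduces everything to the computation of $N_L(\mathfrak{P}_0)$, which is exactly where Lemma~\ref{Normalizer_of_K_0_in_GL_n(E)} is used, just as in the paper. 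Your route buys uniformity across the ramified and unramified cases and all $n\geqslant 1$, and it isolates the single point where the arithmetic of $E/F$ matters; the paper's route is presumably lower-tech and stays within explicit matrix manipulations. Two small remarks on exposition rather than correctness. First, when you write $N_G(S)\to\mathrm{Aut}(S)\cong\mathbb{Z}/2$, this must be read as automorphisms of $S$ as an $F$-algebraic torus (equivalently, the inner automorphism of the ambient group restricted to the subtorus), since $F^\times$ has a much larger group of continuous abstract automorphisms; you are already implicitly working at the algebraic level when you treat $gSg^{-1}$ as a subtorus, so this is consistent, but worth saying. Second, the parenthetical reason you give for $C_G(\mathfrak{P}_0)\subseteq Z(L)$ — that $\{\lambda\in\mathfrak{O}_E^\times\mid\lambda\overline\lambda=1\}$ is a proper subset of $\mathfrak{O}_E^\times$ — is what kills the corner blocks $A_{13},A_{31}$ in the block-matrix computation; it would help the reader to state that this is its role, and also to note that the centralizer-of-$\mathrm{GL}_n(\mathfrak{O}_E)$-is-scalars step is vacuous but harmless at $n=1$, where the off-diagonal vanishing already suffices.
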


\begin{proof}
We use Lemma \ref{Normalizer_of_K_0_in_GL_n(E)} to prove this Proposition. The proof goes in the similar lines as Proposition 6 in \cite{sandeep}.
\end{proof}

\section{Calculation of $N_G(\rho_0)$} \label{Sec_5}

\subsection{Unramified case}
We have the following conclusion about $N_G(\rho_0)$ for the unramified case:

If $n$ is even then $N_G(\rho_0)= Z(L) \mathfrak{P}_0$ and if $n$ is odd then $N_G(\rho_0)= Z(L) \mathfrak{P}_0 \rtimes \langle J \rangle$. For details refer to section 5.1 in \cite{sandeep}. 

\subsection{Ramified case:}

We have the following conclusion about $N_G(\rho_0)$ for ramified case:

If $n$ is odd then $N_G(\rho_0)= Z(L) \mathfrak{P}_0$ and if $n$ is even then $N_G(\rho_0)= Z(L) \mathfrak{P}_0 \rtimes \langle J \rangle$. For details refer to section 5.2 in \cite{sandeep}. 	

\begin{lemma}\label{lem_2}
	When $n$ is odd in the unramified case or when $n$ is even in the ramified case, we have $N_G(\rho_0)= \left<\mathfrak{P}_0, w_0, w_1\right>$, where $w_0=J$ and $w_1=
	\begin{bmatrix}
	0 & 0& {\overline{\varpi}_E}^{-1}Id_n\\
	0&1&0\\
	\varpi_E Id_n & 0 &0
	\end{bmatrix}$.\\
\end{lemma}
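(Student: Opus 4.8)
The plan is to identify $N_G(\rho_0)$ as a group by first bounding it from above using Proposition~\ref{pro_12}, and then exhibiting explicit representatives that normalize both $\mathfrak{P}_0$ and the representation $\rho_0$. From Proposition~\ref{pro_12} we know $N_G(\mathfrak{P}_0) = \mathfrak{P}_0 Z(L) \rtimes \langle J \rangle$, so every element of $N_G(\rho_0)$ has the form $p z$ or $p z J$ with $p \in \mathfrak{P}_0$, $z \in Z(L)$. Since $\mathfrak{P}_0$ already fixes $\rho_0$ up to isomorphism (indeed $\rho_0^j \cong \rho_0$ for $j \in \mathfrak{P}_0$ trivially), the content is to determine which cosets $z \mathfrak{P}_0$ and $z J \mathfrak{P}_0$ satisfy $\rho_0^m \cong \rho_0$. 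Writing $z = \mathrm{diag}(a\,Id_n, \lambda, \overline{a}^{-1} Id_n)$ with $a \in E^\times$, $\lambda \in \mathfrak{O}_E^\times$ (the $\mathrm{U}_1$-part of $z$ being automatically in $\mathfrak{P}_0$), the action of $z$ on $\rho_0 = \tau_\theta \otimes \chi$ factors through the valuation $v_E(a)$: only $\varpi_E^{v_E(a)} Id_n$ matters, so the relevant question is whether $\zeta$ (or $\zeta J$, equivalently $w_1$) fixes $\rho_0$.

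First I would recall from the conclusions of Sections~5.1 and 5.2 (imported from \cite{sandeep}) that in precisely the two cases under consideration—$n$ odd, unramified; $n$ even, ramified—one has $N_G(\rho_0) = Z(L)\mathfrak{P}_0 \rtimes \langle J \rangle$, so that $J = w_0 \in N_G(\rho_0)$. It then remains to produce, inside $Z(L)\mathfrak{P}_0 \rtimes \langle J \rangle$, the element $w_1$ and to check $\langle \mathfrak{P}_0, w_0, w_1 \rangle = N_G(\rho_0)$. I would verify directly that $w_1 \in N_G(\mathfrak{P}_0)$ by a block-matrix computation: conjugation by $w_1$ swaps the first and third block rows/columns (scaled by $\varpi_E$, $\overline\varpi_E^{-1}$) and fixes the middle block, so it visibly preserves $\mathfrak{P}_0$ given the $\mathbf{p}_E$-integrality pattern in the off-diagonal blocks of $\mathfrak{P}$. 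Next, observe $w_1 = \zeta^{-1} \cdot (\text{something}) \cdot J$ up to an element of $\mathfrak{P}_0$; more precisely $w_1 J^{-1}$ is a diagonal element of $Z(L)$ of the form $\mathrm{diag}(\overline\varpi_E^{-1} Id_n, 1, \varpi_E Id_n)$, which is $\zeta^{-1}$ composed with the scalar $\overline\varpi_E^{-1}\varpi_E = \pm 1$ depending on ramification. Hence $w_1 \in Z(L)\mathfrak{P}_0 \rtimes \langle J\rangle = N_G(\rho_0)$ automatically, and conversely $\zeta \in \langle w_0, w_1\rangle \cdot \mathfrak{P}_0$, so $Z(L)\mathfrak{P}_0 = \langle \mathfrak{P}_0, \zeta\rangle \cdot (\text{unit part}) \subseteq \langle \mathfrak{P}_0, w_0, w_1\rangle$; the unit part of $Z(L)$ already lies in $\mathfrak{P}_0$.

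Assembling these: $\langle \mathfrak{P}_0, w_0, w_1 \rangle \subseteq N_G(\rho_0)$ since each generator lies in $N_G(\rho_0)$, and $\langle \mathfrak{P}_0, w_0, w_1 \rangle \supseteq \langle \mathfrak{P}_0, \zeta, J \rangle = Z(L)\mathfrak{P}_0 \rtimes \langle J \rangle = N_G(\rho_0)$ using that $\mathfrak{P}_0 Z(L) = \coprod_{k} \mathfrak{P}_0 \zeta^k$ times units. I expect the main obstacle to be the bookkeeping around the scalar $\varpi_E \overline\varpi_E^{-1}$—which equals $1$ in the unramified case and $-1$ in the ramified case—and making sure this scalar, viewed as an element of $Z(\mathfrak{P}_0)$, acts trivially on $\rho_0$ (it does, since $\tau_\theta$ has trivial central character twist built into $\rho_0$'s construction and $\chi$ is a character); this is exactly the point where the parity hypotheses on $n$ enter, matching the case analysis of Section~5. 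The verification that $w_1$ normalizes $\mathfrak{P}_0$ and the identification $w_1 J^{-1} \in Z(L)$ are routine matrix calculations that I would state but not belabor.
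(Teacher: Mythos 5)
Your proposal is correct and follows the same route the paper (via its citation of \cite{sandeep}) takes: import from Section~5 that in the two stated cases $N_G(\rho_0)=Z(L)\mathfrak{P}_0\rtimes\langle J\rangle$, then match generating sets using the matrix identities $w_0=J$, $w_1 J^{-1}\in Z(L)$, and $\zeta=w_0w_1$, together with $Z(L)\mathfrak{P}_0=\coprod_{k\in\mathbb{Z}}\mathfrak{P}_0\zeta^k$.

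A few side remarks in the writeup are imprecise but not load-bearing. You write ``$\zeta J$, equivalently $w_1$''; the correct identity is $w_1=J\zeta$ (equivalently $\zeta=w_0w_1$), and $J\zeta\neq\zeta J$ in general since $J\zeta J^{-1}=\zeta^{-1}$. Likewise $w_1J^{-1}=\mathrm{diag}(\overline\varpi_E^{-1}Id_n,1,\varpi_E Id_n)$ equals $\zeta^{-1}$ only in the unramified case; in the ramified case it differs from $\zeta^{-1}$ by $\mathrm{diag}(-Id_n,1,-Id_n)\in Z(\mathfrak{P}_0)\subseteq\mathfrak{P}_0$, not by a scalar matrix, though this still lands it in $Z(L)$ as you need. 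Finally, the closing claim that the parity hypotheses enter through whether $\varpi_E\overline\varpi_E^{-1}$ ``acts trivially on $\rho_0$'' misplaces where the parity condition actually matters: elements of $\mathfrak{P}_0$ (in particular scalars in $Z(\mathfrak{P}_0)$) automatically satisfy $\rho_0^m\simeq\rho_0$, and the parity condition is already what produces $J\in N_G(\rho_0)$ in the Section~5 analysis you are importing; once you invoke $N_G(\rho_0)=Z(L)\mathfrak{P}_0\rtimes\langle J\rangle$, no further appeal to it is needed. With those side remarks trimmed, the core of your argument is exactly right.
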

\begin{proof}
The proof goes in the similar lines as Lemma 2 in \cite{sandeep}.
\end{proof}

\section{Structure of $\mathcal{H}(G,\rho)$} \label{sec_6} 

\subsection{Unramified case:}

In this section, we determine the structure of $\mathcal{H}(G,\rho)$ for the unramified case when $n$ is odd. Using cuspidality of $\rho_0$, it can be shown by Theorem 4.15 in \cite{MR1235019}, that $ \mathfrak{I}_G(\rho)= \mathfrak{P}N_G(\rho_0)\mathfrak{P}$. But from lemma \ref{lem_2}, $N_G(\rho_0)= \left<\mathfrak{P}_0, w_0, w_1\right>$. So $\mathfrak{I}_G(\rho)=\mathfrak{P}\left<\mathfrak{P}_0, w_0, w_1\right>\mathfrak{P}= \mathfrak{P}\left<w_0, w_1\right> \mathfrak{P}$, as $\mathfrak{P}_0$ is a subgroup of $\mathfrak{P}$. Let $V$ be the vector space corresponding to $\rho$. Let us recall that  $\mathcal{H}(G, \rho)$ consists of maps $f \colon G \to End_{\mathbb{C}}(V^{\vee})$ such that support of $f$ is compact and $f(pgp')= \rho^{\vee}(p)f(g)\rho^{\vee}(p')$ for $p,p' \in \mathfrak{P}, g \in G$. In fact $\mathcal{H}(G,\rho)$ consists of $\mathbb{C}$-linear combinations of maps $f \colon G \longrightarrow End_{\mathbb{C}}(V^{\vee})$ such that $f$ is supported on $\mathfrak{P}x\mathfrak{P}$ where $x \in \mathfrak{I}_G(\rho)$ and $f(pxp')= \rho^{\vee}(p)f(x)\rho^{\vee}(p')$ for $p,p' \in \mathfrak{P}$. We shall now show there exists $\phi_0 \in \mathcal{H}(G,\rho)$ with support $\mathfrak{P}w_0\mathfrak{P}$ and satisfies $\phi_0^2= q^n+ (q^n-1)\phi_0$. Let

\[
K(0)= \mathrm{U}(n,n+1) \cap \mathrm{GL}_{2n+1}(\mathfrak{O}_E)=\{g \in \mathrm{GL}_{2n+1}(\mathfrak{O}_E)\mid {}^t\overline{g}Jg=J\},
\]
\[
K_1(0)=\{g \in Id_{n+1}+ \varpi_E \mathrm{M}_{2n+1}(\mathfrak{O}_E)\mid {}^t\overline{g}Jg=J\},
\]
\[
\mathsf{G}= \{g \in \mathrm{GL}_{2n+1}(k_E) \mid {}^t\overline{g}Jg=J\}.
\]

The map $r$ from $K(0)$ to $\mathsf{G}$ given by $r\colon K(0)\xrightarrow{\text{mod} \, p_E}\mathsf{G}$ is a surjective group homomorphism with kernel $K_1(0)$. So by the first isomorphism theorem of groups we have:

\begin{center}
	$\frac{K(0)}{K_1(0)}\cong \mathsf{G}.$
\end{center}

$r(\mathfrak{P})= \mathsf{P}=
\begin{bmatrix}
\mathrm{GL}_n(k_E) &M_{n \times 1}(k_E) &\mathrm{M}_n(k_E) \\
0 & \mathrm{U}_1(k_E) & M_{1 \times n}(k_E) \\
0         & 0& \mathrm{GL}_n(k_E)
\end{bmatrix} \bigcap \mathsf{G}$= Siegel parabolic subgroup of $\mathsf{G}$.\\\par

Now $\mathsf{P}= \mathsf{L} \ltimes \mathsf{U}$, where $\mathsf{L}$ is the Siegel Levi component of $\mathsf{P}$ and $\mathsf{U}$ is the unipotent radical of $\mathsf{P}$. Here

\[
\mathsf{L}= \Bigg\lbrace \begin{bmatrix}
a & 0 & 0\\
0 &\lambda &0\\
0 &0 &{^t}{\overline {a}}{^{-1}}
\end{bmatrix} \mid a \in \mathrm{GL}_n(k_E), \lambda \in k_E^{\times}, \lambda\overline{\lambda}=1 \Bigg \rbrace,
\]

\[
\mathsf{U}= \Bigg \lbrace \begin{bmatrix}
Id_n &u &X \\
0 &1 &-{}^t\overline{u}\\
0 &0 &Id_n
\end{bmatrix} \mid  X \in \mathrm{M}_n(k_E), u \in \mathrm{M}_{n \times 1}(k_E), X + {}^t\overline{X}+u{}^t\overline{u}=0 \Bigg \rbrace.
\]

Let $V$ be the vector space corresponding to $\rho$. The Hecke algebra $\mathcal{H}(K(0),\rho)$ is a sub-algebra of $\mathcal{H}(G,\rho)$.\par

Let $\overline{\rho}$ be the representation of $\mathsf{P}$ which when inflated to $\mathfrak{P}$ is given by $\rho$ and $V$ is also the vector space corresponding to $\overline{\rho}$. The Hecke algebra $\mathcal{H}(\mathsf{G},\overline{\rho})$ looks as follows:

\[
\mathcal{H}(\mathsf{G},\overline{\rho})= \left\lbrace f \colon \mathsf{G} \to End_{\mathbb{C}}(V^{\vee}) \; \middle|  \;
\begin{varwidth}{\linewidth}
$f(pgp')= \overline{\rho}^{\vee}(p)f(g)\overline{\rho}^{\vee}(p')$\\
where $p,p' \in \mathsf{P}, \, g \in \mathsf{G}$
\end{varwidth}
\right \rbrace.
\] \par

Now the homomorphism $r \colon K(0) \longrightarrow \mathsf{G}$ extends to a map from  $\mathcal{H}(K(0),\rho)$ to $\mathcal{H}(\mathsf{G},\overline{\rho})$ which we again denote by $r$. Thus $r \colon \mathcal{H}(K(0),\rho) \longrightarrow \mathcal{H}(\mathsf{G},\overline{\rho})$ is given by

\[
r(\phi)(r(x))=\phi(x)
\]
\[ 
\text{for} \, \phi \in \mathcal{H}(K(0),\rho) \, \text{and} \, x \in K(0).
\]
\begin{proposition}\label{pro_17}
	The map $r \colon \mathcal{H}(K(0),\rho) \longrightarrow \mathcal{H}(\mathsf{G},\overline{\rho})$ is an algebra isomorphism.
\end{proposition}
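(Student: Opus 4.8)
The plan is to exhibit mutually inverse algebra homomorphisms between $\mathcal{H}(K(0),\rho)$ and $\mathcal{H}(\mathsf{G},\overline\rho)$, with $r$ as defined in the statement playing one direction and the inflation map the other. First I would check that $r$ is well defined: given $\phi\in\mathcal{H}(K(0),\rho)$, its support is a union of double cosets $\mathfrak{P}x\mathfrak{P}$ with $x\in K(0)$, and since $K_1(0)=\ker(r)$ lies in $\mathfrak{P}$ (indeed in $\mathfrak{P}_-\mathfrak{P}_+\cdot(\mathfrak{P}_0\cap K_1(0))$, all of which act trivially or through $\rho_0$, and $\rho$ is trivial on the pro-$p$ part) the function $\phi$ is constant on $K_1(0)$-cosets in the relevant sense, so $r(\phi)(r(x)):=\phi(x)$ is unambiguous. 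The bi-equivariance $r(\phi)(\bar p\,r(x)\,\bar p')=\overline\rho^\vee(\bar p)\,r(\phi)(r(x))\,\overline\rho^\vee(\bar p')$ follows by lifting $\bar p,\bar p'$ to $\mathfrak{P}$ and using the corresponding property of $\phi$ together with the fact that $\overline\rho\circ r=\rho$ on $\mathfrak{P}$. Conversely, inflation sends $\psi\in\mathcal{H}(\mathsf{G},\overline\rho)$ to $\psi\circ r\in\mathcal{H}(K(0),\rho)$; one checks the support of $\psi\circ r$ is compact (it is a finite union of $\mathfrak{P}$-double cosets, each being the full preimage of a $\mathsf{P}$-double coset, which is open and closed in $K(0)$) and that the equivariance transports back. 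These two maps are visibly inverse to each other as maps of vector spaces.

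Next I would verify that $r$ respects convolution. Here one must be careful about Haar measures: normalize the measure on $K(0)$ so that $K_1(0)$ has volume one, and give $\mathsf{G}$ the counting measure (or equivalently push forward). Then for $\phi_1,\phi_2\in\mathcal{H}(K(0),\rho)$ and $x\in K(0)$,
\[
(\phi_1*\phi_2)(x)=\int_{K(0)}\phi_1(y)\phi_2(y^{-1}x)\,d\mu(y)
=\sum_{\bar y\in\mathsf{G}}\phi_1(\tilde y)\phi_2(\tilde y^{-1}x),
\]
where $\tilde y$ is any lift of $\bar y$ and the summand is independent of the choice by the $K_1(0)$-invariance established above (the factor $\mu(K_1(0))=1$ absorbs the integral over each fiber). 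Applying $r$ and writing $\bar x=r(x)$, the right-hand side is exactly $\sum_{\bar y\in\mathsf{G}} r(\phi_1)(\bar y)\,r(\phi_2)(\bar y^{-1}\bar x)=\big(r(\phi_1)*r(\phi_2)\big)(\bar x)$, which is the desired identity $r(\phi_1*\phi_2)=r(\phi_1)*r(\phi_2)$. Thus $r$ is an algebra homomorphism, and being a bijection with an explicit inverse, it is an algebra isomorphism.

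The part requiring genuine care — the main obstacle — is the well-definedness, i.e. confirming that every $\phi\in\mathcal{H}(K(0),\rho)$ really is inflated from $\mathsf{G}$, equivalently that $\rho$ restricted to $K_1(0)$ is trivial and hence $\phi$ is left and right $K_1(0)$-invariant. This is where the Iwahori factorization of $\mathfrak{P}$ and the definition $\rho(j_-j_0j_+)=\rho_0(j_0)$ enter: one writes an element of $K_1(0)$ via its Iwahori factorization inside $\mathfrak{P}$, notes that the $\mathfrak{P}_\pm$-parts are killed by $\rho$ by construction, and that the $\mathfrak{P}_0$-part lies in $\mathfrak{P}_0\cap K_1(0)\cong K_1\times \mathrm{U}_1(1+\mathbf{p}_E)$, on which $\rho_0=\tau_\theta\chi$ is trivial by the depth-zero hypothesis. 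Once this is in place the rest is the bookkeeping above. I would present the measure normalization explicitly to make the convolution computation transparent, and otherwise keep the verification of equivariance brief, as it is a formal consequence of $\overline\rho\circ r=\rho$.
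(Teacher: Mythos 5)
Your argument is correct and is the natural one. The paper itself only cites its predecessor \cite{sandeep} for this proposition, so a literal comparison is not possible, but the route you take — observe that $\rho|_{K_1(0)}$ is trivial (via the compatible Iwahori factorization of $K_1(0)$: the $\mathfrak{P}_\pm$-factors are killed by construction of $\rho$, and $\mathfrak{P}_0 \cap K_1(0) \cong K_1 \times \mathrm{U}_1(1+\mathbf{p}_E)$ lies in $\ker \rho_0$ by depth zero), so that every $\phi \in \mathcal{H}(K(0),\rho)$ factors through $\mathsf{G}$, then exhibit inflation as the inverse and check compatibility with convolution after fixing the measure so that $\mu(K_1(0))=1$ — is exactly the standard one for identifying a depth-zero Hecke algebra on a maximal compact with the corresponding finite-group Hecke algebra. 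You correctly isolate the only step that needs content (triviality of $\rho$ on $K_1(0)$), and the measure normalization you choose is the one that makes $r$ multiplicative on the nose rather than up to a scalar, which is indeed what the paper implicitly uses. One small imprecision: the factorization $K_1(0)\subseteq \mathfrak{P}_-\mathfrak{P}_+\cdot(\mathfrak{P}_0\cap K_1(0))$ should really read $K_1(0)=(\mathfrak{P}_-\cap K_1(0))(\mathfrak{P}_0\cap K_1(0))(\mathfrak{P}_+\cap K_1(0))$, and one should note that uniqueness of the Iwahori factorization in $\mathfrak{P}$ forces each factor of $g\in K_1(0)$ to land in $K_1(0)$ — but this is a cosmetic matter, not a gap.
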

\begin{proof}
 Refer to Proposition 17 in \cite{sandeep}
\end{proof}

Let $w=r(w_0)=r(
\begin{bmatrix}
0 &0& Id_n \\
0&1&0\\
Id_n & 0&0
\end{bmatrix})=
\begin{bmatrix}
0 &0& Id_n \\
0&1&0\\
Id_n & 0&0
\end{bmatrix} \in \mathsf{G}$. Observe that $K(0) \supseteq \mathfrak{P}\amalg\mathfrak{P}w_0\mathfrak{P}$ and $\mathsf{G} \supseteq \mathsf{P}\amalg\mathsf{P}w_0\mathsf{P}$.\par 

The induced representation $Ind_{\mathsf{P}}^{\mathsf{G}}\overline{\rho}$ is a sum of two irreducible subrepresentations (by general theory). The ratio of the dimensions of these subrepresentations gives a parameter in the Hecke algebra. This is part of Howlett-Lehrer's general theory. Kutzko-Morris reworks this key observation. Hence we have $Ind_{\mathsf{P}}^{\mathsf{G}}\overline{\rho}= \pi_1 \oplus \pi_2$, where $\pi_1, \pi_2$ are distinct irreducible representations of $\mathsf{G}$ with $\text{dim} \pi_2 \geqslant \text{dim} \pi_1$. Let $\lambda= \frac{\text{dim} \pi_2}{\text{dim} \pi_1}$. By Proposition 3.2 in \cite{MR2276353}, there exists a unique $\phi$ in $\mathcal{H}(\mathsf{G},\overline{\rho})$ with support $\mathsf{P}w\mathsf{P}$ such that $\phi^2= \lambda + (\lambda-1)\phi$. By Proposition \ref{pro_17}, there is a unique element $\phi_0$ in $\mathcal{H}(K(0),\rho)$ such that $r(\phi_0)=\phi$. Thus supp($\phi_0$)=$\mathfrak{P}w_0\mathfrak{P}$ and $\phi_0^2= \lambda+(\lambda-1)\phi_0$. As support of $\phi_0= \mathfrak{P}w_0\mathfrak{P}\subseteq K(0)\subseteq G$, so $\phi_0$ can be extended to $G$ and viewed as an element of $\mathcal{H}(G,\rho)$. Thus $\phi_0$ satisfies the following relation in $\mathcal{H}(G,\rho)$:

\[
\phi_0^2= \lambda+ (\lambda-1)\phi_0.
\]
\par  

We shall now show that $\lambda= q^n$. Recall that as $\rho_0$ is an irreducible cuspidal representation of $\mathrm{GL}_n(k_E) \times \mathrm{U}_1(k_E)$, so $\rho_0= \tau_{\theta} \chi$, where $\tau_{\theta}$ is an irreducible cuspidal representation of $\mathrm{GL}_n(k_E)$ and $\chi$ is a cuspidal representation of $\mathrm{U}_1(k_E)$. Note that here $\theta$ is a regular charcter of $l^{\times}$ where $[ l \colon k_E ]=n$ and $k_E= \mathbb{F}_{q^2}$ so that $l= \mathbb{F}_{q^{2n}}$. Recall that $\theta^{\Phi}= \theta^{q^2}$. Hence, from Proposition 8 in \cite{sandeep} we have, $\theta^{q^n}=\theta^{-1}$ .\par

As $\mathsf{G}= \mathrm{U}(n,n+1)(k_E)$, so the dual group $\mathsf{G}^{*}$ is given by $\mathsf{G}^{*} \cong \mathrm{U}(n,n+1)(k_E)$ (i.e $\mathsf{G}^{*} \cong \mathsf{G}$). Note that $\theta$ corresponds to a semi-simple element $s^{*} \in L^{*}$ in $\mathsf{G}^{*}$. Then by Theorems 8.4.8 and 8.4.9 in \cite{carter_1992}, we have $\lambda = |c_{\mathsf{G}^{*}}(s^{*})|_p$.\par

Note that $L^{*} \cong L$. So $s^{*}$ corresponds to $s$ in $L$. Hence, we have $\lambda = |c_{\mathsf{G}}(s)|_p$. We write $s = \begin{bmatrix} 
\alpha &0 &0\\
0 &\lambda &0\\
0 &0 & {}^t\overline{\alpha}^{-1} 
\end{bmatrix}$. Observe that $\lambda \overline{\lambda}=1, \lambda \in k_E^{\times}, \alpha \in \mathbb{F}_{q^{2n}}^{\times}$. More precisely, $\alpha$ is in the image of 
$\mathbb{F}_{q^{2n}}^{\times}$ under a fixed embedding $\mathbb{F}_{q^{2n}}^{\times} \hookrightarrow \mathrm{GL}_n(\mathbb{F}_{q^2})$. This embedding arises when we let $l$ act on the basis of $l$ over $k_E$ via multiplication. We can thus embed $l$ in $M_n(k_E)$ and $l^{\times}$ in $\mathrm{GL}_n(k_E)$ which we call the usual embedding. Note that $\theta$ is regular implies that $\mathbb{F}_{q^{2n}}= \mathbb{F}_{q^2}(\alpha)$. Our goal is to compute $|c_{\mathsf{G}}(s)|_p$.\par  

By Proposition 3.19 in \cite{digne_francois_michel_1991}, we have Sylow p-subgroups of $c_{\mathsf{G}}(s)$ are the sets of $\mathbb{F}_{q^2}$-points of the Unipotent radicals of the Borel subgroups of $c_{\mathsf{G}}(s)$. By Proposition 2.2 in \cite{digne_francois_michel_1991}, we have Borel subgroups of $c_{\mathsf{G}}(s)$ are of the form $B \cap c_{\mathsf{G}}(s)$, where $B$ is a Borel subgroup of $\mathsf{G}$. As Siegel parabolic subgroup $\mathsf{P}$ of $\mathsf{G}$ contains a Borel subgroup of $\mathsf{G}$, so $c_{\mathsf{P}}(s)= \mathsf{P}
 \cap c_{\mathsf{G}}(s)$ contains a Sylow p-subgroup of $c_{\mathsf{G}}(s)$. \par
 
 \begin{lemma} \label{semidirect}
$ c_{\mathsf{P}}(s)=c_{\mathsf{L}}(s) \ltimes c_{\mathsf{U}}(s)$.
 \end{lemma}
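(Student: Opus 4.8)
The plan is to exploit the fact that $s$ lies in $\mathsf{L}$, so conjugation by $s$ preserves the Levi decomposition $\mathsf{P} = \mathsf{L} \ltimes \mathsf{U}$. First I would record the easy inclusion: if $p \in c_{\mathsf{P}}(s)$ and we write $p = \ell u$ with $\ell \in \mathsf{L}$, $u \in \mathsf{U}$ (unique by the semidirect product structure), then $s p s^{-1} = (s \ell s^{-1})(s u s^{-1})$ is again a Levi-times-unipotent factorization, and since $s \in \mathsf{L}$ normalizes both $\mathsf{L}$ and $\mathsf{U}$ we have $s\ell s^{-1} \in \mathsf{L}$ and $sus^{-1}\in \mathsf{U}$. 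By uniqueness of the decomposition, $sps^{-1} = p$ forces $s\ell s^{-1} = \ell$ and $sus^{-1} = u$ separately, i.e. $\ell \in c_{\mathsf{L}}(s)$ and $u \in c_{\mathsf{U}}(s)$. Hence $c_{\mathsf{P}}(s) \subseteq c_{\mathsf{L}}(s)\, c_{\mathsf{U}}(s)$. The reverse inclusion $c_{\mathsf{L}}(s)\, c_{\mathsf{U}}(s) \subseteq c_{\mathsf{P}}(s)$ is immediate since both factors centralize $s$ and sit inside $\mathsf{P}$.

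Next I would check that this product is in fact a semidirect product, i.e. that $c_{\mathsf{L}}(s)$ normalizes $c_{\mathsf{U}}(s)$ and that the two intersect trivially. The trivial intersection is clear because $\mathsf{L} \cap \mathsf{U} = \{1\}$ already. For the normalization: $c_{\mathsf{L}}(s)$ normalizes $\mathsf{U}$ (as a subgroup of $\mathsf{L}$), and if $\ell \in c_{\mathsf{L}}(s)$ and $u \in c_{\mathsf{U}}(s)$, then $\ell u \ell^{-1} \in \mathsf{U}$ and $s(\ell u \ell^{-1})s^{-1} = \ell\, (s u s^{-1})\, \ell^{-1} = \ell u \ell^{-1}$ using $s\ell = \ell s$ and $sus^{-1}=u$; so $\ell u \ell^{-1} \in c_{\mathsf{U}}(s)$. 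This gives $c_{\mathsf{P}}(s) = c_{\mathsf{L}}(s) \ltimes c_{\mathsf{U}}(s)$ as claimed.

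I do not expect any serious obstacle here: the statement is a purely formal consequence of $s$ lying in the Levi factor together with uniqueness of the Levi decomposition of elements of $\mathsf{P}$, and the argument is identical in spirit to the corresponding step in \cite{sandeep}. The only point requiring a line of justification is that writing $p = \ell u$ and applying $\mathrm{Ad}(s)$ does not disturb which factor is which, which is exactly the observation that $s \in \mathsf{L}$ normalizes the unipotent radical $\mathsf{U}$; everything else is bookkeeping. This lemma then feeds into the computation of $|c_{\mathsf{G}}(s)|_p$, since by the preceding discussion a Sylow $p$-subgroup of $c_{\mathsf{G}}(s)$ lies in $c_{\mathsf{P}}(s)$ and the $p$-part of $\mathsf{L}$ is small, so the bulk of the $p$-contribution will come from $c_{\mathsf{U}}(s)$.
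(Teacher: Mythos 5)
Your proof is correct and follows essentially the same route as the paper: write $p=\ell u$ using the Levi decomposition of $\mathsf{P}$, conjugate by $s\in\mathsf{L}$, and invoke uniqueness of the factorization to conclude that $\ell$ and $u$ each centralize $s$ separately. The only point where you go slightly beyond the paper is in explicitly verifying that $c_{\mathsf{L}}(s)$ normalizes $c_{\mathsf{U}}(s)$; the paper asserts this without proof, so your extra lines are a small improvement rather than a deviation.
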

 \begin{proof}
Recall that $\mathsf{P}=\mathsf{L} \ltimes \mathsf{U}$. Hence $\mathsf{L} \cap \mathsf{U}= \varnothing$ and $\mathsf{U} \unlhd \mathsf{P}$. As  $\mathsf{L} \cap \mathsf{U}= \varnothing \Longrightarrow c_{\mathsf{L}}(s) \cap c_{\mathsf{U}}(s)= \varnothing $. Note that $c_{\mathsf{U}}(s) \unlhd  (c_{\mathsf{L}}(s) \times c_{\mathsf{U}}(s))$. So it makes sense to talk of $c_{\mathsf{L}}(s) \ltimes c_{\mathsf{U}}(s)$. \par

Let $x \in \mathsf{P}(s) \Longrightarrow x \in \mathsf{P}, sxs^{-1}=x$. Note that as $x \in \mathsf{P}$ so $x=lu$ for some $l \in \mathsf{L}, u \in \mathsf{U}$. Therefore,

\begin{align*}
slus^{-1}=lu \\
\Longrightarrow sls^{-1}sus^{-1}=lu.\\
\end{align*}

Let $sls^{-1} =m$ and $sus^{-1}=n$. Now as $s \in \mathsf{L}$, so $sls^{-1}=m \in \mathsf{L}$. Note that $sus^{-1}=n \in \mathsf{U}$ as $\mathsf{U} \unlhd \mathsf{P}$. Therefore, we have $mn=lu$ or $m^{-1}l = nu^{-1}$. But $m^{-1}l \in \mathsf{L}$ and $nu^{-1} \in \mathsf{U}$, so we have $m^{-1}l, nu^{-1} \in \mathsf{L} \cap \mathsf{U}$. Recall that $\mathsf{L} \cap \mathsf{U} = {e}$, so $m=l, n=u$. Therefore, $sls^{-1}=l, sus^{-1}=u$. So we have $\l \in c_{\mathsf{L}}(s), u \in c_{\mathsf{U}}(s)$. Hence, $x \in c_{\mathsf{L}}(s) \ltimes c_{\mathsf{U}}(s)$. So $c_{\mathsf{P}}(s) \subseteq c_{\mathsf{L}}(s) \ltimes c_{\mathsf{U}}(s)$. \par

Conversely, let $x \in c_{\mathsf{L}}(s) \ltimes c_{\mathsf{U}}(s)$. So $x =lu$ where $ l\in c_{\mathsf{L}}(s)$ and $u \in c_{\mathsf{U}}(s)$. Hence $sls^{-1}=l$ and $sus^{-1}=u$. Therefore, $sxs^{-1}=slus^{-1}=sls^{-1}sus^{-1}=lu=x$. So $x \in c_{\mathsf{P}}(s)$. Hence $c_{\mathsf{L}}(s) \ltimes c_{\mathsf{U}}(s) \subseteq c_{\mathsf{P}}(s)$. Therefore, $c_{\mathsf{P}}(s)=c_{\mathsf{L}}(s) \ltimes c_{\mathsf{U}}(s)$.
\end{proof}

From lemma \ref{semidirect}, we get $|c_{\mathsf{P}}(s)|_p= |c_{\mathsf{L}}(s)|_p |c_{\mathsf{U}}(s)|_p$. Note that $|c_{\mathsf{L}}(s)|_p=1$. Therefore, $|c_{\mathsf{P}}(s)|_p= |c_{\mathsf{L}}(s)|_p |c_{\mathsf{U}}(s)|_p= |c_{\mathsf{U}}(s)|_p$. \par

 \begin{lemma} \label{cardinality}
$|c_{\mathsf{U}}(s)|= |c_{\mathsf{U}}(s)|_p= q^n$.	
\end{lemma}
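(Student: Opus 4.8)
The claim is that the centralizer $c_{\mathsf{U}}(s)$, where $s$ is the semisimple element of $\mathsf{L}$ attached to the regular character $\theta$ of $l^\times$ under the usual embedding $l^\times \hookrightarrow \mathrm{GL}_n(k_E)$, has order exactly $q^n$. Since $\mathsf{U}$ is already a $p$-group (it is the unipotent radical of a parabolic in the finite group $\mathsf{G}$ of Lie type in characteristic $p$), we have $|c_{\mathsf U}(s)| = |c_{\mathsf U}(s)|_p$ automatically, so only the first equality needs work. The approach is to make the conjugation action of $s$ on $\mathsf{U}$ completely explicit using the matrix description of $\mathsf{U}$ given earlier, and then count fixed points.

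First I would write a general element of $\mathsf{U}$ as the block matrix with $Id_n$ on the diagonal, $u \in \mathrm M_{n\times 1}(k_E)$ and $-{}^t\overline u$ in the $(1,2)$- and $(2,3)$-slots, and $X \in \mathrm M_n(k_E)$ in the $(1,3)$-slot, subject to $X + {}^t\overline X + u\,{}^t\overline u = 0$. Writing $s = \mathrm{diag}(\alpha, \lambda, {}^t\overline\alpha^{-1})$ with $\alpha \in \mathrm{GL}_n(k_E)$ the image of a generator of $l^\times = \mathbb F_{q^{2n}}^\times$, a direct block computation gives that $s$ conjugates the element parametrised by $(u,X)$ to the one parametrised by $(\alpha \lambda^{-1} u,\ \alpha X\, {}^t\overline\alpha)$. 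Hence $s$ centralises it iff $\alpha X\, {}^t\overline\alpha = X$ and $\alpha\lambda^{-1}u = u$. Since $\lambda\overline\lambda = 1$ and $\lambda \in k_E^\times$, the element $\lambda$ is a root of unity not equal to a nontrivial eigenvalue of $\alpha$ in general — but more to the point, $\alpha$ generates the field $l = \mathbb F_{q^{2n}}$ and acts on $k_E^n \cong l$ by multiplication, so $\alpha u = \lambda u$ forces $u = 0$ unless $\lambda$ lies in $l$ and equals $\alpha$ acting as a scalar, which it does not since $n \geq 1$ and $\alpha$ is a field generator; I would record that $u = 0$ is forced. So $c_{\mathsf U}(s)$ is cut out by the single condition $\alpha X\,{}^t\overline\alpha = X$ together with $X + {}^t\overline X = 0$.

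Next I would identify the solution space of $\alpha X\, {}^t\overline\alpha = X$. Viewing $X \in \mathrm M_n(k_E)$ as an element of $\mathrm{Hom}_{k_E}(l, l)$ after the usual identification $k_E^n \cong l$, and using that ${}^t\overline\alpha$ corresponds (via the trace form, which is $\overline{\,\cdot\,}$-compatible) to multiplication by $\overline\alpha = \alpha^{q^n}$ or by $\alpha^{-1}$ according to the relevant conjugate-duality — this is where I need to be careful about exactly which Galois twist of $\alpha$ appears — the equation becomes a condition forcing $X$ to be $l$-linear (or $l$-semilinear) for the $l$-module structure, i.e. $X$ is multiplication by a fixed element of $l$. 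That gives a $k_E$-space of dimension $n$, hence $|k_E|^{n/2}$-many... but $|k_E| = q^2$, so this must be reconciled with the skew condition $X + {}^t\overline X + u{}^t\overline u = 0$, which with $u=0$ becomes $X = -{}^t\overline X$: this halves things back down and, combined with $\dim_{k_F} l = n$ over $k_F = \mathbb F_q$, yields exactly $q^n$ solutions. The bookkeeping of which field ($k_E$ vs $k_F$) the relevant space is naturally a module over, and matching dimensions against the target $q^n$, is the main obstacle; the rest is routine block matrix algebra.

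I would present the argument in that order: (1) compute the conjugation formula for $s$ on $\mathsf U$; (2) deduce $u = 0$ from regularity of $\alpha$ (equivalently $\mathbb F_{q^{2n}} = \mathbb F_{q^2}(\alpha)$); (3) solve $\alpha X\,{}^t\overline\alpha = X$ with $X = -{}^t\overline X$, identifying the solution set with (the trace-zero, or skew-Hermitian, part of) $l$ regarded as an $\mathbb F_q$-vector space, and count $q^n$; (4) conclude $|c_{\mathsf U}(s)| = q^n$ and note the $p$-part equality is free since $\mathsf U$ is a $p$-group.
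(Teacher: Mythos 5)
Your approach is essentially the same as the paper's: write down the explicit conjugation action of $s$ on $\mathsf{U}$, kill the $u$-coordinate using regularity of $\alpha$, reduce to counting $X$ with $\alpha X\,{}^t\overline{\alpha}=X$ and $X=-{}^t\overline{X}$, and identify the solutions with (roughly) half of $l=\mathbb{F}_{q^{2n}}$. Your observation that $|c_{\mathsf{U}}(s)|=|c_{\mathsf{U}}(s)|_p$ is free because $\mathsf{U}$ is a $p$-group is correct and matches the paper's implicit use of the same fact. The elimination of $u$ is also fine: the paper phrases it as ``if $u\neq 0$ then $\lambda\in k_E$ is an eigenvalue of $\alpha$, contradicting irreducibility of the minimal polynomial of $\alpha$ over $k_E$,'' which is your argument made precise.

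However, the two places you flag as ``where I need to be careful'' and ``the main obstacle'' are precisely the two places where the paper has to do genuine work, and you leave both unresolved. First, to identify $\{X: \alpha X\,{}^t\overline{\alpha}=X\}$ with an $l$-intertwiner space $\operatorname{Hom}_l({}_1k_E^n,{}_2k_E^n)$ you must know that $\beta\mapsto {}^t\overline{m}_\beta^{-1}$ actually defines a second embedding of $l$ into $M_n(k_E)$, i.e.\ that ${}^t\overline{m}_\alpha^{-1}$ is a root of the minimal polynomial of $\alpha$. This is exactly where the hypothesis $\theta^{q^n}=\theta^{-1}$ (equivalently $\overline{\alpha}^{-1}$ is Galois-conjugate to $\alpha$) is used; without it the two $l$-structures on $k_E^n$ need not be isomorphic and the intertwiner space could be $\{0\}$ rather than a copy of $l$. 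You gesture at ``$\overline\alpha=\alpha^{q^n}$ or $\alpha^{-1}$'' but never pin down which and why, so the identification with $l$ is asserted rather than proved. Second, the ``halving'' from $|\Xi(\alpha)|=q^{2n}$ down to $q^n$ is not bookkeeping to be waved at: the paper proves it by choosing $\gamma\in k_E^\times$ with $\overline{\gamma}=-\gamma$ (which exists because $k_E/k_F$ is a proper quadratic extension in the unramified case) and checking that multiplication by $\gamma$ is a bijection between the Hermitian and skew-Hermitian pieces $\Xi_{1}(\alpha)$ and $\Xi_{-1}(\alpha)$ of $\Xi(\alpha)$, whence $|\Xi_{-1}(\alpha)|^2=|\Xi(\alpha)|$. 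Your phrase ``trace-zero, or skew-Hermitian, part of $l$'' points in the right direction, but you never establish that this subset has exactly $q^n$ elements. So your plan is correct in outline and matches the paper's proof, but as written it has two genuine gaps, both of which you yourself identify as the nontrivial content.
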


\begin{proof}
Recall that the elements of $\mathsf{U}$ are of form 

\[
m= \begin{bmatrix}
Id_n & u &X\\
0 &1& -{}^t \overline{u}\\
0 &0 &Id_n\\
\end{bmatrix}
\] where $x \in M_n(k_E), u \in M_{n\times 1}(k_E), X+{}^t\overline{X}+u{}^t\overline{u}=0$. If $m \in c_{\mathsf{U}}(s)$ then $ms=sm$. So we have,
\[
 \begin{bmatrix}
 \alpha & 0 &0\\
0 &\lambda& 0\\
0 &0 &{}^t\overline{\alpha}^{-1}
\end{bmatrix}
 \begin{bmatrix}
Id_n & u &X\\
0 &1& -{}^t \overline{u}\\
0 &0 &Id_n
\end{bmatrix}=
 \begin{bmatrix}
Id_n & u &X\\
0 &1& -{}^t \overline{u}\\
0 &0 &Id_n
\end{bmatrix}
 \begin{bmatrix}
\alpha & 0 &0\\
0 &\lambda& 0\\
0 &0 &{}^t\overline{\alpha}^{-1}\\
\end{bmatrix}.
\]

From the above matrix relation, it follows that $\alpha u= \lambda u, \alpha X= X {}^t \overline{\alpha}^{-1}, \lambda {}^t \overline{u}= {}^t \overline{u} {}^t \overline{\alpha}^{-1}$. Recall that $X+{}^t\overline{X}+u{}^t\overline{u}=0, \lambda \overline{\lambda}=1$. Also recall that $u \in M_{n \times 1}(k_E), \alpha \in \mathbb{F}_{q^{2n}}^{\times}, k_E(\alpha)=l$. As $\alpha u= \lambda u$, so if $u \neq 0$ then $\lambda \in k_E$ is an eigen value of $\alpha$. So $\lambda$ is a root of the minimal polynomial of $\alpha$ over $k_E$. But as the minimal polynomial is irreducible over $k_E[x]$, so this is a contradiction. So $u=0$.\par

So we have to find $X$ such that $X+{}^t\overline{X}=0, \alpha X= X {}^t \overline{\alpha}^{-1}$. Let  $\Xi = M_n(k_E)$ and set $\Xi_{\epsilon}= \{X \in \Xi \mid {}^t \overline{X}= \epsilon X \}$. Note that $X \in \Xi$ can be written as $\frac{X+{}^t \overline{X}}{2} + \frac{X-{}^t \overline{X}}{2}$, so $\Xi= \Xi_{1} \oplus \Xi_{-1}$.\par

Let us set $\Xi(\alpha)= \{X \in \Xi \mid \alpha X {}^t \overline{\alpha}=X \}$ and 
$\Xi_{\epsilon}(\alpha)= \{X \in \Xi_{\epsilon} \mid \alpha X {}^t \overline{\alpha}=X \}$. Then we have, $\Xi(\alpha)= \Xi_{1}(\alpha) \oplus \Xi_{-1}(\alpha)$. Let us choose $\gamma \in k_E$ such that $\gamma \neq 0$ and $\overline{\gamma}= -\gamma$. Note that, if $X \in \Xi_1(\alpha)$ then $X={}^tX$ and $\alpha X {}^t \overline{\alpha}=X$. So ${}^t(\overline{\gamma X})=-(\gamma X)$ and 
$\alpha (\gamma X) {}^t \overline{\alpha}= \gamma X$. Therefore, $\gamma X \in \Xi_{-1}(\alpha)$. We also have a bijection from $c_{\mathsf{U}}(s) \longrightarrow \Xi_1(\alpha)$ given by:

\[
 \begin{bmatrix}
Id_n & 0 &X\\
0 &1& 0\\
0 &0 &Id_n\\
\end{bmatrix} \longrightarrow X.
\]  

Hence we have, $|c_{\mathsf{U}}(s)|= |\Xi_1(\alpha)|=|\Xi_{-1}(\alpha)|$. Let us now compute $|\Xi(\alpha)|$. So we want to find the cardinality of $X \in \Xi$ such that $\alpha X {}^t \overline{\alpha}=X$ for a fixed $\alpha \in \mathbb{F}_{q^{2n}}^{\times}$. Let $\phi_1 \colon \mathbb{F}_{q^{2n}} \hookrightarrow M_n(\mathbb{F}_{q^2})$ be the usual embedding take $\beta$ to $m_{\beta}$. Let $f(x)$ be the minimal polynomial of $\alpha$ over $k_E= \mathbb{F}_{q^2}$. So we have $\mathbb{F}_{q^{2n}} \cong \frac {\mathbb{F}_{q^2}[x]}{<f(x)>}$. Hence, a polynomial $p(\alpha) \in k_E(\alpha)$  is mapped to $p(m_{\alpha})$. \par

Let us consider an another embedding $\phi_2 \colon  \mathbb{F}_{q^{2n}} \cong \frac {\mathbb{F}_{q^2}[x]}{<f(x)>} \hookrightarrow M_n(\mathbb{F}_{q^2})$ given by $\phi_2(\alpha)= {}^t \overline{m}_{\alpha}^{-1}$. We must show that $\phi_2$ is well-defined. That is, we have to show that $f({}^t \overline{m}_{\alpha}^{-1})=0$.
 But observe that, $f({}^t \overline{m}_{\alpha}^{-1})= 
 {}^t \overline{f(m_{\alpha}^{-1})}=  {}^t \overline{f(m_{\alpha}^{q^n})}={}^t \overline{(f(m_{\alpha}))^{q^n}}=0^{q^n}=0$. In the above relations, we have used the fact that $\theta^{-1}= \theta^{q^n}$ which follows from Proposition 8 in 
 \cite{sandeep}. Therefore, $\phi_2$ is well-defined.\par

 Hence we have two different embeddings $\phi_1$ and $\phi_2$ of $l$ in $M_n(q^2)$. Recall that, we want to compute the cardinality of $X \in \Xi$ such that $\alpha X {}^t \overline{\alpha}=X$ for a fixed $\alpha \in \mathbb{F}_{q^{2n}}^{\times}$. That is, we want to compute the cardinality of $X \in \Xi$ such that $X \phi_2(\lambda) =\phi_1(\lambda) X$ for $\lambda \in l=\mathbb{F}_{q^2n}$.\par
 
 Note that, we can make $V=k_E^n$ into a $l$-module in two different ways. Namely, for $\lambda \in l$ and $v \in V$ we have, 
 
\begin{align*}
 \lambda. v= \phi_1(\lambda).v\\
 \lambda* v= \phi_2(\lambda).v
\end{align*}

Let us denote the two $l$-modules by ${}_1k_E^n$ and ${}_2k_E^n$. So $X \phi_2(\lambda) =\phi_1(\lambda) X \Longleftrightarrow X \in \mathrm{Hom}_l({}_1k_E^n,{}_2k_E^n) \cong \mathrm{Hom}_l(l,l) \cong l$. Therefore, we have $|\Xi(\alpha)|= |\mathrm{Hom}_l({}_1k_E^n,{}_2k_E^n)|= |l|=q^{2n}$. \par

Note that $|\Xi(\alpha)|= |\Xi_{1}(\alpha)|.|\Xi_{-1}(\alpha)|$. But as $|\Xi_{1}(\alpha)|=|\Xi_{-1}(\alpha)|$, so we have $|\Xi(\alpha)|= |\Xi_{-1}(\alpha)|^2=q^{2n}$. Thus $|\Xi_{-1}(\alpha)|= q^n$. Therefore, $|c_{\mathsf{U}}(s)|_p=|c_{\mathsf{U}}(s)|=|\Xi_{-1}(\alpha)|=q^n$.\\ 
 
\end{proof}

From Lemmas \ref{cardinality} and \ref{semidirect} we have, $\lambda= |c_{\mathsf{U}}(s)|_p= |c_{\mathsf{L}}(s)|_p.|c_{\mathsf{L}}(s)|_p=1.q^n=q^n$.\par

Recall that $\phi_0 \in \mathcal{H}(G, \rho)$ has support $\mathfrak{P}w_0\mathfrak{P}$ and satisfies the relation $\phi_0^2= \lambda+(\lambda-1)\phi_0$. So we have $\phi_0^2= q^n+(q^n-1)\phi_0$ in $\mathcal{H}(G, \rho)$. 

Now we shall now show that there exists $\phi_1 \in \mathcal{H}(G, \rho)$ with support $\mathfrak{P}w_1\mathfrak{P}$ satisfying the same relation as $\phi_0$. Let $\eta \in \mathrm{U}(n,n+1)$ be such that $\eta w_0 \eta^{-1}= w_1$ and $\eta \mathfrak{P} \eta^{-1}= \mathfrak{P}$.\par

As $\mathfrak{P} \subseteq K(0)$ and $w_0 \in K(0)$, so $K(0) \supseteq \mathfrak{P} \amalg \mathfrak{P}w_0\mathfrak{P} \Longrightarrow \eta K(0) \eta^{-1} \supseteq \eta \mathfrak{P} \eta^{-1} \amalg \eta\mathfrak{P}w_0\mathfrak{P}\eta^{-1}$. But observe that  $\eta \mathfrak{P} \eta^{-1}= \mathfrak{P}$ and  $\eta\mathfrak{P}w_0\mathfrak{P}\eta^{-1}= (\eta\mathfrak{P}\eta^{-1})(\eta w_0 \eta^{-1})(\eta\mathfrak{P}\eta^{-1})=\mathfrak{P}w_1\mathfrak{P}$ (since $\eta w_0 \eta^{-1}=w_1$). So $\eta K(0) \eta^{-1} \supseteq \mathfrak{P} \amalg \mathfrak{P}w_1\mathfrak{P}$.\par

Let $r'$ be homomorphism of groups given by the map $r'\colon \eta K(0)\eta^{-1} \longrightarrow \mathsf{G}$ such that $r'(x)= (\eta^{-1}x\eta) mod p_E$ for $x \in \eta K(0) \eta^{-1}$. Observe that $r'$ is a surjective homomorphism of groups because $r'(\eta K(0) \eta^{-1})= (\eta^{-1} \eta K(0) \eta^{-1} \eta) mod p_E=  K(0) mod p_E =\mathsf{G}$. The kernel of group homomorphism is $\eta K_1(0) \eta^{-1}$. Now by the first isomorphism theorem of groups we have $\frac{\eta K(0) \eta^{-1}}{\eta K_1(0) \eta^{-1}} \cong \frac{K(0)}{K_1(0)} \cong \mathsf{G}$. Also $r'(\eta\mathfrak{P}\eta^{-1})=(\eta^{-1} \eta \mathfrak{P} \eta^{-1} \eta) mod p_E=\mathfrak{P}mod p_E= \mathsf{P}$. Let $\overline{\rho}$ be representation of $\mathsf{P}$ which when inflated to $\mathfrak{P}$ is given by $\rho$. The Hecke algebra of $\eta K(0) \eta^{-1}$ which we denote by $\mathcal{H}(\eta K(0) \eta^{-1}, \rho)$ is a sub-algebra of $\mathcal{H}(G,\rho)$.\par

The map $r':\eta K(0) \eta^{-1} \longrightarrow \mathsf{G}$ extends to a map from  $\mathcal{H}(\eta K(0) \eta^{-1},\rho)$ to $\mathcal{H}(\mathsf{G}, \overline{\rho})$ which we gain denote by $r'$. Thus $r' \colon \mathcal{H}(\eta K(0) \eta^{-1},\rho) \longrightarrow \mathcal{H}(\mathsf{G}, \overline{\rho})$ is given by
\[r'(\phi)(r'(x))= \phi(x) \]
\[\text{for} \, \phi \in \mathcal{H}(\eta K(0) \eta^{-1},\rho) \, \text{and} \, x \in \eta K(0) \eta^{-1}.\] \par

The proof that $r'$ is an isomorphism goes in the similar lines as Proposition \ref{pro_17}. We can observe that $r'(w_1)=w \in \mathsf{G}$, where $w$ is defined as before in this section. As we know from our previous discussion in this section, that there exists a unique $\phi$ in  $\mathcal{H}(\mathsf{G}, \overline{\rho})$ with support $\mathsf{P} w \mathsf{P}$ such that $\phi^2= q^n+ (q^n-1)\phi$. Hence  there is a unique element $\phi_1 \in \mathcal{H}(\eta K(0) \eta^{-1},\rho)$ such that $r'(\phi_1)=\phi$. Thus supp($\phi_1$)=$\mathfrak{P} w_1 \mathfrak{P}$ and $\phi_1^2= q^n+ (q^n-1)\phi_1$. Now $\phi_1$ can be extended to $G$ and viewed as an element in $\mathcal{H}(G, \rho)$ as $\mathfrak{P} w_1 \mathfrak{P} \subseteq \eta K(0) \eta^{-1}\subseteq G$. Thus $\phi_1$ satisfies the following relation in $\mathcal{H}(G,\rho)$:

\[
\phi_1^2= q^n+ (q^n-1)\phi_1.
\]\par 

Thus we have shown there exists $\phi_i \in \mathcal{H}(G,\rho)$ with supp($\phi_i$)=$\mathfrak{P}w_i\mathfrak{P}$ satisfying $\phi_i^2= q^n+ (q^n-1)\phi_i$  for $i=0,1$.

\begin{lemma}\label{lem_5}
	$\phi_0$ and $\phi_1$ are units in $\mathcal{H}(G,\rho)$.
\end{lemma}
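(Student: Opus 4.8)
The plan is to read off invertibility directly from the quadratic relation $\phi_i^2 = q^n + (q^n-1)\phi_i$ in $\mathcal{H}(G,\rho)$, which we have just established for $i = 0, 1$. Recall first that $\mathcal{H}(G,\rho)$ is a unital $\mathbb{C}$-algebra, its identity $\mathbf{1}$ being the function supported on $\mathfrak{P}$ taking the value $\rho^\vee(p)$ at $p \in \mathfrak{P}$ (normalised with respect to the fixed Haar measure); the scalar $q^n$ in the relation is to be understood as $q^n \mathbf{1}$. Since $q = p^r$ with $p$ an odd prime and $r, n \geqslant 1$, the number $q^n$ is a nonzero complex scalar, hence invertible in $\mathbb{C}$.

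First I would rewrite the relation as
\[
\phi_i \bigl( \phi_i - (q^n - 1)\mathbf{1} \bigr) = q^n \mathbf{1},
\]
and then set $\psi_i = q^{-n}\bigl(\phi_i - (q^n-1)\mathbf{1}\bigr) \in \mathcal{H}(G,\rho)$. This gives $\phi_i \psi_i = \mathbf{1}$. Since $\psi_i$ is a $\mathbb{C}$-linear combination of $\phi_i$ and $\mathbf{1}$, it commutes with $\phi_i$, so one also has $\psi_i \phi_i = \mathbf{1}$. Hence $\psi_i$ is a two-sided inverse of $\phi_i$, and therefore $\phi_0, \phi_1$ are units in $\mathcal{H}(G,\rho)$, with $\phi_i^{-1} = q^{-n}\bigl(\phi_i - (q^n-1)\mathbf{1}\bigr)$ for $i = 0,1$.

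There is essentially no obstacle here: the statement is a purely formal consequence of the quadratic identity, valid in any unital ring in which the element in question satisfies a monic quadratic with a unit as constant term. The only points deserving a word of care are the existence of the identity element $\mathbf{1}$ in $\mathcal{H}(G,\rho)$ — which holds since $\rho$ is irreducible and $\mathfrak{P}$ is compact open — and the convention that the constant $q^n$ appearing in the relation denotes $q^n \mathbf{1}$. Granted these, the displayed computation completes the argument, and the same reasoning applies without change in the ramified case, where the relevant parameter is again a nonzero power of $q$.
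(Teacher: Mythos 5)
Your proof is correct and takes essentially the same approach as the paper: you rearrange the quadratic relation $\phi_i^2 = q^n + (q^n-1)\phi_i$ to exhibit $q^{-n}\bigl(\phi_i - (q^n-1)\mathbf{1}\bigr)$ as a two-sided inverse of $\phi_i$. The paper writes the same inverse as $\frac{\phi_i + (1-q^n)1}{q^n}$; your version merely spells out the commutation and the role of the identity element more explicitly.
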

\begin{proof}
	As $\phi_i^2= q^n + (q^n-1) \phi_i$ for $i=0,1$. So $\phi_i( \frac{\phi_i+(1-q^n)1}{q^n})= 1$ for i=0,1. Hence $\phi_0$ and $\phi_1$ are units in $\mathcal{H}(G,\rho)$.
\end{proof}\par

 \begin{lemma}\label{lem_6}
	Let $\phi, \psi \in \mathcal{H}(G,\rho) $ with support of $\phi, \psi$ being $\mathfrak{P}x\mathfrak{P}, \mathfrak{P}y\mathfrak{P}$ respectively. Then supp($\phi * \psi$)=supp($\phi \psi$) $\subseteq$ (supp($\phi$))(supp($\psi$))=$\mathfrak{P}x\mathfrak{P}y\mathfrak{P}$.
\end{lemma}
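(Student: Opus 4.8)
The plan is to unwind the definition of the convolution product and read the support condition off it directly. Recall that the multiplication in $\mathcal{H}(G,\rho)$ is
\[
(\phi * \psi)(g) = \int_G \phi(h)\,\psi(h^{-1}g)\,d\mu(h), \qquad g \in G,
\]
the integrand being the composite of two endomorphisms of $\rho^{\vee}$; in particular $\phi\psi$ and $\phi * \psi$ denote the very same function, so the asserted equality $\mathrm{supp}(\phi*\psi) = \mathrm{supp}(\phi\psi)$ is merely this identification and needs no argument. First I would fix $g \notin \mathfrak{P}x\mathfrak{P}y\mathfrak{P}$ and observe that for every $h \in G$ at least one of $h \notin \mathfrak{P}x\mathfrak{P} = \mathrm{supp}\,\phi$ or $h^{-1}g \notin \mathfrak{P}y\mathfrak{P} = \mathrm{supp}\,\psi$ must hold, for otherwise $g = h(h^{-1}g)$ would lie in $(\mathfrak{P}x\mathfrak{P})(\mathfrak{P}y\mathfrak{P}) = \mathfrak{P}x\mathfrak{P}y\mathfrak{P}$. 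Hence the integrand vanishes identically in $h$, so $(\phi*\psi)(g) = 0$, giving $\mathrm{supp}(\phi*\psi) \subseteq \mathfrak{P}x\mathfrak{P}y\mathfrak{P}$, which is exactly the containment claimed.

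Next I would check that $\phi * \psi$ is genuinely an element of $\mathcal{H}(G,\rho)$. The bi-equivariance $f(k_1gk_2) = \rho^{\vee}(k_1)f(g)\rho^{\vee}(k_2)$ for $f = \phi*\psi$ follows formally from the same property of $\phi$ and $\psi$ together with the left-invariance of $\mu$, exactly as for the scalar-valued algebra $\mathcal{H}(G)$. For compactness of the support I would use that $\mathfrak{P}$ is a compact open subgroup: then $\mathfrak{P}\cap x\mathfrak{P}x^{-1}$ has finite index in $\mathfrak{P}$, so $\mathfrak{P}x\mathfrak{P}$ (and likewise $\mathfrak{P}y\mathfrak{P}$) is a finite union of translates of $\mathfrak{P}$, hence compact; applying the continuous multiplication map $G\times G \to G$ to the compact set $(\mathfrak{P}x\mathfrak{P})\times(\mathfrak{P}y\mathfrak{P})$ shows $\mathfrak{P}x\mathfrak{P}y\mathfrak{P}$ is compact, and therefore so is its closed subset $\mathrm{supp}(\phi*\psi)$.

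I do not expect a genuine obstacle here: this is the standard ``support of a convolution is contained in the product of the supports'' principle, and the only points deserving a sentence are the identification of the Hecke-algebra product with $*$ and the compactness of $\mathfrak{P}x\mathfrak{P}y\mathfrak{P}$, which rests solely on $\mathfrak{P}$ being compact open. The one mild subtlety to bear in mind is that elements of $\mathcal{H}(G,\rho)$ take values in $\mathrm{End}_{\mathbb{C}}(\rho^{\vee})$ rather than in $\mathbb{C}$, so ``vanishing of the integrand'' must be read in $\mathrm{End}_{\mathbb{C}}(\rho^{\vee})$; but since the integrand is a composite of operators it is zero as soon as either factor is, so the argument goes through verbatim.
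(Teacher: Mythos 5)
Your proof is correct and is exactly the standard argument for "support of a convolution lies in the product of the supports." The paper itself defers the proof to Lemma~5 of the earlier paper \cite{sandeep}, which (given how routine the statement is) must follow the same route: unwind the convolution integral, note that for $g$ outside $\mathfrak{P}x\mathfrak{P}y\mathfrak{P}$ at least one factor of the integrand vanishes for every $h$, and use $\mathfrak{P}\mathfrak{P}=\mathfrak{P}$ to see that the product of the two double cosets is again the double coset $\mathfrak{P}x\mathfrak{P}y\mathfrak{P}$. Your extra observations — that the double cosets are finite unions of $\mathfrak{P}$-cosets and hence compact, and that the set $\mathfrak{P}x\mathfrak{P}y\mathfrak{P}$ is both open and closed so no closure subtlety arises in speaking of the support — are correct and in fact tidy up a point the paper leaves implicit. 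No gap.
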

\begin{proof}
The proof is same as that of Lemma 5 in \cite{sandeep}.
\end{proof}\par 

Let $\zeta=w_0w_1$, So 
\begin{center}
	$\zeta = 
	\begin{bmatrix}
	\varpi_E Id_n & 0 &0\\
	0 & 1&0\\
	0     &0& \varpi_E^{-1} Id_n
	\end{bmatrix}.$
\end{center}\par

\begin{lemma}\label{lem_10}
	$\text{supp}(\phi_0* \phi_1)=\mathfrak{P} \zeta \mathfrak{P}=\mathfrak{P}w_0 w_1\mathfrak{P}$. 
\end{lemma}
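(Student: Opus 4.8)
The plan is to combine the crude support bound of Lemma~\ref{lem_6} with a double coset identity and the fact (Lemma~\ref{lem_5}) that $\phi_0$ and $\phi_1$ are units. By Lemma~\ref{lem_6} we already know $\mathrm{supp}(\phi_0 * \phi_1) \subseteq \mathfrak{P}w_0\mathfrak{P}w_1\mathfrak{P}$, so it suffices to prove two things: first, that $\mathfrak{P}w_0\mathfrak{P}w_1\mathfrak{P}$ is the single $\mathfrak{P}$-double coset $\mathfrak{P}\zeta\mathfrak{P}$; and second, that $\phi_0 * \phi_1 \neq 0$. The second point is immediate, since by Lemma~\ref{lem_5} both $\phi_0$ and $\phi_1$ are units of $\mathcal{H}(G,\rho)$, hence so is $\phi_0 * \phi_1$, and a unit cannot be zero. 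Granting the first point, the lemma follows: the support of any $f \in \mathcal{H}(G,\rho)$ is invariant under left and right translation by $\mathfrak{P}$ (because $\rho^{\vee}(p)$ is invertible for $p \in \mathfrak{P}$), so $\mathrm{supp}(\phi_0 * \phi_1)$ is a nonempty union of $\mathfrak{P}$-double cosets contained in the single double coset $\mathfrak{P}\zeta\mathfrak{P}$, and therefore coincides with $\mathfrak{P}\zeta\mathfrak{P}$.

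It remains to prove $\mathfrak{P}w_0\mathfrak{P}w_1\mathfrak{P} = \mathfrak{P}\zeta\mathfrak{P}$. The inclusion $\supseteq$ is clear because $\zeta = w_0w_1 \in (\mathfrak{P}w_0\mathfrak{P})(\mathfrak{P}w_1\mathfrak{P})$. For the opposite inclusion I would insert the Iwahori factorization $\mathfrak{P} = \mathfrak{P}_-\mathfrak{P}_0\mathfrak{P}_+$ into the middle copy of $\mathfrak{P}$, so that
\[
\mathfrak{P}w_0\mathfrak{P}w_1\mathfrak{P} = \mathfrak{P}\,w_0\,\mathfrak{P}_-\mathfrak{P}_0\mathfrak{P}_+\,w_1\,\mathfrak{P},
\]
and then move the unipotent factors past $w_0$ and $w_1$. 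A block-matrix conjugation shows $w_0\mathfrak{P}_-w_0^{-1} \subseteq \mathfrak{P}_+$ (recall $w_0 = J$ merely interchanges the two outer block rows and block columns), so $\mathfrak{P}w_0\mathfrak{P}_- \subseteq \mathfrak{P}w_0$; likewise $w_1^{-1}\mathfrak{P}_+w_1 \subseteq \mathfrak{P}_-$, so $\mathfrak{P}_+w_1\mathfrak{P} \subseteq w_1\mathfrak{P}$. Combining these, $\mathfrak{P}w_0\mathfrak{P}w_1\mathfrak{P} \subseteq \mathfrak{P}w_0\mathfrak{P}_0w_1\mathfrak{P}$. Finally $w_0 = J$ normalizes $\mathfrak{P}_0$, so $w_0\mathfrak{P}_0 = \mathfrak{P}_0w_0$, and since $\mathfrak{P}_0 \subseteq \mathfrak{P}$ we get
\[
\mathfrak{P}w_0\mathfrak{P}_0w_1\mathfrak{P} = \mathfrak{P}\,\mathfrak{P}_0\,w_0w_1\,\mathfrak{P} = \mathfrak{P}w_0w_1\mathfrak{P} = \mathfrak{P}\zeta\mathfrak{P},
\]
as required.

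The only genuine computation is the pair of conjugation inclusions $w_0\mathfrak{P}_-w_0^{-1} \subseteq \mathfrak{P}_+$ and $w_1^{-1}\mathfrak{P}_+w_1 \subseteq \mathfrak{P}_-$, and I expect the latter to be the main point to check. It is an elementary block-matrix manipulation, but it uses the unramified normalization $\overline{\varpi}_E = \varpi_E$ in an essential way: conjugating a general element of $\mathfrak{P}_+$ by $w_1^{-1}$ multiplies its off-diagonal block $X$ by $\varpi_E\overline{\varpi}_E$ and its column parameter $u$ by $\overline{\varpi}_E$, which forces both into $\mathbf{p}_E$, while the defining relation $X + {}^t\overline{X} + u\,{}^t\overline{u} = 0$ is preserved because the scalar $\varpi_E\overline{\varpi}_E$ lies in $F$ and so commutes with the Galois conjugation. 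Everything else in the argument is formal bookkeeping with double cosets, and the non-vanishing is handed to us for free by Lemma~\ref{lem_5}.
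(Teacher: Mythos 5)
Your proof is correct and follows essentially the same route as the paper's: the crude support bound from Lemma~\ref{lem_6}, the Iwahori factorization $\mathfrak{P}=\mathfrak{P}_-\mathfrak{P}_0\mathfrak{P}_+$ inserted in the middle copy of $\mathfrak{P}$, the conjugation inclusions $w_0\mathfrak{P}_-w_0^{-1}\subseteq\mathfrak{P}_+$ and $w_1^{-1}\mathfrak{P}_+w_1\subseteq\mathfrak{P}_-$ together with $w_0$ normalizing $\mathfrak{P}_0$, and finally non-vanishing because $\phi_0*\phi_1$ is a unit; you merely make explicit the appeal to Lemma~\ref{lem_5} that the paper leaves implicit. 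One small remark: the normalization $\overline{\varpi}_E=\varpi_E$ is not actually used in an essential way for the inclusion $w_1^{-1}\mathfrak{P}_+w_1\subseteq\mathfrak{P}_-$; what matters is only that $\varpi_E\overline{\varpi}_E\in\mathbf{p}_F\subseteq\mathbf{p}_E$ and $\overline{\varpi}_E\in\mathbf{p}_E$, which hold in both the unramified and ramified settings, consistent with the fact that the same lemma is invoked in the ramified case as well.
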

\begin{proof} 
	It follows from Lemma \ref{lem_6} that $\text{supp}(\phi_0* \phi_1)\subseteq \mathfrak{P}w_0\mathfrak{P}w_1\mathfrak{P}$. Now let us recall $\mathfrak{P}_0, \mathfrak{P}_{+}, \mathfrak{P}_{-}$.
	
	\begin{center}
		$ \mathfrak{P}_{0}=
		\Bigg\lbrace
		\begin{bmatrix}
		a & 0 &0\\
		0& \lambda & 0\\ 
		0 & 0&{^t}{\overline{a}}{^{-1}}
		\end{bmatrix}\mid a \in \mathrm{GL}_n(\mathfrak{O}_E), \lambda \in \mathfrak{O}_E^{\times}, \lambda\overline{\lambda}=1 
		\Bigg \rbrace$,
		\end {center}
		
		\begin{center}
			$\mathfrak{P}_{+}= \Bigg \lbrace \begin{bmatrix}
			Id_n &u &X \\
			0 &1 &-{}^t\overline{u}\\
			0 &0 &Id_n
			\end{bmatrix} \mid  X \in \mathrm{M}_n(\mathfrak{O}_E), u \in \mathrm{M}_{n \times 1}(\mathfrak{O}_E), X + {^t}{\overline{X}+u{}^t\overline{u}}=0 \Bigg \rbrace,$
		\end{center}
		
		\begin{center}
			$\mathfrak{P}_{-}= \Bigg \lbrace \begin{bmatrix}
			Id_n &0 &0 \\
			-{}^t\overline{u} &1 &0\\
			X &u &Id_n
			\end{bmatrix} \mid  X \in \mathrm{M}_n(\mathbf{p}_E), u \in \mathrm{M}_{n \times 1}(\mathbf{p}_E), X + {^t}{\overline{X}+u{}^t\overline{u}}=0 \Bigg \rbrace.$
		\end{center}\par			
		
		It is easy observe that $w_0\mathfrak{P}_{-}w_0^{-1} \subseteq \mathfrak{P}_{+}, w_0\mathfrak{P}_{0}w_0^{-1}=\mathfrak{P}_{0}, w_1^{-1}\mathfrak{P}_{+}w_1 \subseteq \mathfrak{P}_{-}$. Now we have  
		
		\begin{align*}
		\mathfrak{P}w_0\mathfrak{P}w_1\mathfrak{P}&= \mathfrak{P}w_0\mathfrak{P}_{-}\mathfrak{P}_{0}\mathfrak{P}_{+}w_1\mathfrak{P}\\
		&=\mathfrak{P}w_0\mathfrak{P}_{-}w_0^{-1}w_0\mathfrak{P}_{0}w_0^{-1}w_0w_1w_1^{-1}\mathfrak{P}_{+}w_1\mathfrak{P}\\
		&\subseteq\mathfrak{P}\mathfrak{P}_{+}\mathfrak{P}_{0}w_0w_1\mathfrak{P}_{-}\mathfrak{P}\\
		&=\mathfrak{P}w_0w_1\mathfrak{P}\\
		&=\mathfrak{P}\zeta\mathfrak{P}.
		\end{align*}

		So $\mathfrak{P}w_0\mathfrak{P}w_1\mathfrak{P}\subseteq \mathfrak{P}w_0w_1\mathfrak{P}=\mathfrak{P}\zeta\mathfrak{P}$. On the contrary, as $1 \in \mathfrak{P}$, so $\mathfrak{P}\zeta\mathfrak{P}=\mathfrak{P}w_0w_1\mathfrak{P} \subseteq \mathfrak{P}w_0\mathfrak{P}w_1\mathfrak{P}$. Hence we have $\mathfrak{P}w_0\mathfrak{P}w_1\mathfrak{P}= \mathfrak{P}w_0w_1\mathfrak{P}=\mathfrak{P}\zeta\mathfrak{P}$. Therefore $\text{supp}(\phi_0*\phi_1) \subseteq \mathfrak{P}w_0\mathfrak{P}w_1\mathfrak{P}= \mathfrak{P}w_0w_1\mathfrak{P}=\mathfrak{P}\zeta\mathfrak{P}$. This implies $\text{supp}(\phi_0*\phi_1)= \varnothing$ or $\mathfrak{P}\zeta\mathfrak{P}$. But if $\text{supp}(\phi_0*\phi_1)= \varnothing$ then $(\phi_0*\phi_1)=0$ which is a contradiction. Thus $\text{supp}(\phi_0*\phi_1)=\mathfrak{P}\zeta\mathfrak{P}$. 
	\end{proof}

We shall now show that $\phi_0$ and $\phi_1$ generate the Hecke algebra $\mathcal{H}(G, \rho)$. Recall that $\mathcal{H}(G,\rho)$ consists of $\mathbb{C}$-linear combinations of maps $f \colon G \longrightarrow End_{\mathbb{C}}(V^{\vee})$ such that $f$ is supported on $\mathfrak{P}x\mathfrak{P}$ where $x \in \mathfrak{I}_G(\rho)$ and $f(pxp')= \rho^{\vee}(p)f(x)\rho^{\vee}(p')$ for $p,p' \in \mathfrak{P}$. Also note that $\mathfrak{I}_G(\rho)= \mathfrak{P}<\mathfrak{P}_0, w_0, w_1>\mathfrak{P}$. Observe that as $w_0$ normalizes $\mathfrak{P}_0$ and as $\mathfrak{P}_0$ being a subgroup of $\mathfrak{P}$, so $\mathcal{H}(G,\rho)$ consists of $\mathbb{C}$-linear combinations of maps $f \colon G \longrightarrow End_{\mathbb{C}}(V^{\vee})$ such that $f$ is supported on $\mathfrak{P}x\mathfrak{P}$ where $x$ is a word in $w_0, w_1$ with $w_0^2=w_1^2=Id$ and $f(pxp')= \rho^{\vee}(p)f(x)\rho^{\vee}(p')$ for $p,p' \in \mathfrak{P}$. Recall that support of $\phi_0$ is $\mathfrak{P}w_0\mathfrak{P}$ and support of $\phi_1$ is $\mathfrak{P}w_1\mathfrak{P}$. Also note that from Lemma \ref{lem_10}, we have $\text{supp}(\phi_0* \phi_1)=\mathfrak{P}w_0 w_1\mathfrak{P}$. So any $f \colon G \longrightarrow End_{\mathbb{C}}(V^{\vee})$ such that $f$ is supported on $\mathfrak{P}x\mathfrak{P}$ where $x$ is a word in $w_0, w_1$ and $f(pxp')= \rho^{\vee}(p)f(x)\rho^{\vee}(p')$ for $p,p' \in \mathfrak{P}$ can be written as word in $\phi_0$ and $\phi_1$. Therefore, $\mathcal{H}(G,\rho)$ consists of $\mathbb{C}$-linear combinations of words in $\phi_0$ and $\phi_1$. Hence, $\phi_0$ and $\phi_1$ generate $\mathcal{H}(G, \rho)$. Let us denote the Hecke algebra $\mathcal{H}(G, \rho)$ by $\mathcal{A}$. So we have
\[
\mathcal{A}= \mathcal{H}(G,\rho)= \left\langle \phi_i \colon G \to End_{\mathbb{C}}(\rho^{\vee})\; \middle| \;
\begin{varwidth}{\linewidth} 
$\phi_i$ is supported on 
$\mathfrak{P}w_i\mathfrak{P}$\\
and $\phi_i(pw_ip')= \rho^{\vee}(p)\phi_i(w_i)\rho^{\vee}(p')$\\
where $p,p' \in \mathfrak{P}, \, i=0,1$
\end{varwidth}
\right\rangle
\] where $\phi_i$ satisfies the relation:

\begin{center}
	$\phi_i^2= q^n+ (q^n-1) \phi_i$ for $i=0,1$.
\end{center}

As $\phi_0, \phi_1$ are units in algebra $\mathcal{A}$, so $\psi = \phi_0\phi_1$ is a unit too in $\mathcal{A}$ and $\psi^{-1}= \phi_1^{-1}\phi_0^{-1}$. Now as we have seen before that supp($\phi_0\phi_1$) $\subseteq \mathfrak{P}w_0w_1\mathfrak{P} \Longrightarrow supp(\psi) \subseteq \mathfrak{P}\zeta \mathfrak{P}\Longrightarrow supp(\psi)= \varnothing \,\text{or} \, \mathfrak{P}\zeta\mathfrak{P}$. If supp($\psi$)= $\varnothing \Longrightarrow \psi= 0$ which is a contradiction as $\psi$ is a unit in $\mathcal{A}$. So supp($\psi$) = $\mathfrak{P} \zeta \mathfrak{P}$. As $\psi$ is a unit in $\mathcal{A}$, we can show as before that supp($\psi^2$) = $\mathfrak{P} \zeta^2 \mathfrak{P}$. Hence by induction on $n \in \mathbb{N}$, we can further show that that supp($\psi^n$)= $\mathfrak{P} \zeta^n \mathfrak{P}$ for $n \in \mathbb{N}$. \par

Now $\mathcal{A}$ contains a sub- algebra generated by 
$\psi, \psi^{-1}$ over $\mathbb{C}$ and we denote this sub-algebra by $\mathcal{B}$. So $\mathcal{B}=\mathbb{C}[\psi,\psi^{-1}]$ where

\[
\mathcal{B}=\mathbb{C}[\psi,\psi^{-1}] = \left\lbrace c_k\psi^k + \cdots +c_l \psi^l \; \middle| \;
\begin{varwidth}{\linewidth} 
$c_k, \ldots ,c_l \in \mathbb{C};$\\
$k <l ; k,l \in \mathbb{Z}$
\end{varwidth}
\right\rbrace.
\]
\par

\begin{proposition}\label{algebra_isomorphism}
	The unique algebra homomorphism $\mathbb{C}[x, x^{-1}] \longrightarrow \mathcal{B}$ given by $x \longrightarrow \psi$ is an isomorphism. So $\mathcal{B} \simeq \mathbb{C}[x, x^{-1}]$. 
\end{proposition}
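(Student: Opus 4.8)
The plan is to establish the two halves of the statement in turn: that $x\mapsto\psi$ defines a (unique) surjective $\mathbb{C}$-algebra homomorphism onto $\mathcal{B}$, and that this homomorphism is injective. The first half is purely formal. By Lemma~\ref{lem_5} the element $\psi=\phi_0\phi_1$ is a unit of $\mathcal{A}$, hence a unit of the commutative subalgebra $\mathcal{B}=\mathbb{C}[\psi,\psi^{-1}]$; since $\mathbb{C}[x,x^{-1}]$ is the localization $\mathbb{C}[x]_x$, the universal property of localization produces a unique $\mathbb{C}$-algebra homomorphism $\Psi\colon\mathbb{C}[x,x^{-1}]\longrightarrow\mathcal{B}$ with $\Psi(x)=\psi$, and $\Psi$ is surjective because $\psi$ and $\psi^{-1}$ generate $\mathcal{B}$ by definition. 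Everything then reduces to showing $\Psi$ is injective, equivalently that the family $\{\psi^{n}:n\in\mathbb{Z}\}$ is $\mathbb{C}$-linearly independent in $\mathcal{H}(G,\rho)$, equivalently (since the kernel of $\Psi$ would be a principal ideal $(f)$ of the PID $\mathbb{C}[x,x^{-1}]$) that $\psi$ is not algebraic over $\mathbb{C}$.

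For injectivity I would argue as follows. Suppose $\sum_{n=k}^{l}c_n\psi^{n}=0$ with $k\le l$ in $\mathbb{Z}$ and $c_n\in\mathbb{C}$. Multiplying on the left by the unit $\psi^{-k}$ (which is harmless, as multiplication by a unit is injective) we may assume $k=0$, so all exponents are $\ge 0$. Recall from the discussion preceding this proposition that $\psi^{n}$ is a unit of $\mathcal{A}$, hence nonzero, and that $\mathrm{supp}(\psi^{n})=\mathfrak{P}\zeta^{n}\mathfrak{P}$ for every $n\ge 0$ (induction from Lemma~\ref{lem_10}, with $\mathrm{supp}(\psi^{0})=\mathfrak{P}$). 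The double cosets $\mathfrak{P}\zeta^{n}\mathfrak{P}$, $n\ge 0$, are pairwise distinct: by Lemma~\ref{lem_2} the quotient $N_G(\rho_0)/\mathfrak{P}_0$ is the infinite dihedral group $\langle\bar w_0,\bar w_1\mid\bar w_0^{2}=\bar w_1^{2}=1\rangle$, in which the image of $\zeta=w_0w_1$ has infinite order, so the cosets $\mathfrak{P}_0\zeta^{n}$ are pairwise distinct; together with $\mathfrak{I}_G(\rho)=\mathfrak{P}N_G(\rho_0)\mathfrak{P}$ and $\mathfrak{P}\cap N_G(\rho_0)=\mathfrak{P}_0$ (via the Iwahori factorization $\mathfrak{P}=\mathfrak{P}_-\mathfrak{P}_0\mathfrak{P}_+$ and the positivity of $\zeta$) this forces $\mathfrak{P}\zeta^{i}\mathfrak{P}\cap\mathfrak{P}\zeta^{j}\mathfrak{P}=\varnothing$ for $i\ne j$. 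Consequently, if $c_{n_0}\ne 0$ for some $n_0$, then evaluating the relation $\sum c_n\psi^{n}=0$ at the point $\zeta^{n_0}$ annihilates every term except the $n_0$-th and yields $c_{n_0}\,\psi^{n_0}(\zeta^{n_0})=0$; but $\psi^{n_0}$ is nonzero and supported on the single double coset $\mathfrak{P}\zeta^{n_0}\mathfrak{P}$, so $\psi^{n_0}(\zeta^{n_0})\neq 0$ in $End_{\mathbb{C}}(\rho^{\vee})$, a contradiction. Hence all $c_n=0$, $\Psi$ is injective, and therefore an isomorphism $\mathbb{C}[x,x^{-1}]\xrightarrow{\sim}\mathcal{B}$.

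I expect the only point requiring genuine care to be the disjointness of the double cosets $\mathfrak{P}\zeta^{n}\mathfrak{P}$; the remainder is bookkeeping with units and supports. That disjointness is precisely the sort of statement the Iwahori factorization of $\mathfrak{P}$ together with the positivity of $\zeta$ (equivalently, the decomposition $Z(L)\mathfrak{P}_0=\coprod_{m\in\mathbb{Z}}\mathfrak{P}_0\zeta^{m}$ recorded earlier) is designed to deliver, and the companion multiplicativity $\mathfrak{P}\zeta^{m}\mathfrak{P}\cdot\mathfrak{P}\zeta^{n}\mathfrak{P}=\mathfrak{P}\zeta^{m+n}\mathfrak{P}$ for $m,n\ge 0$ was already proved in the special case of Lemma~\ref{lem_10} and carries over without change.
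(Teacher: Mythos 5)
Your proof is correct. The essential content is the $\mathbb{C}$-linear independence of $\{\psi^n : n\in\mathbb{Z}\}$, which you obtain from the pairwise disjointness of the supporting double cosets $\mathfrak{P}\zeta^n\mathfrak{P}$ (via the Iwahori factorization of $\mathfrak{P}$, the strong positivity of $\zeta$, and the decomposition $Z(L)\mathfrak{P}_0=\coprod_{m\in\mathbb{Z}}\mathfrak{P}_0\zeta^m$), together with the observation that a nonzero element of $\mathcal{H}(G,\rho)$ supported on a single double coset cannot vanish at a representative of that coset. The paper itself defers this proposition to Proposition 18 of \cite{sandeep}; the support-disjointness argument you give is the standard one and, modulo the small gap of spelling out why the Iwahori factorization and positivity give $\mathfrak{P}\zeta^m\mathfrak{P}=\mathfrak{P}_-(\mathfrak{P}_0\zeta^m)\mathfrak{P}_+$ with its unique $\overline{U}LU$-factorization, it is complete.
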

\begin{proof}
The proof is same as that of Proposition 18 in \cite{sandeep}. 
\end{proof}

\subsection{Ramified case:}

In this section we determine the structure of $\mathcal{H}(G,\rho)$ for the ramified case when $n$ is even. Recall $ \mathfrak{I}_G(\rho)= \mathfrak{P}N_G(\rho_0)\mathfrak{P}$. But from lemma \ref{lem_2}, $N_G(\rho_0)= \left<\mathfrak{P}_0, w_0, w_1\right>$. So $\mathfrak{I}_G(\rho)=\mathfrak{P}\left<\mathfrak{P}_0, w_0, w_1\right>\mathfrak{P}= \mathfrak{P}\left<w_0, w_1\right> \mathfrak{P}$, as $\mathfrak{P}_0$ is a subgroup of $\mathfrak{P}$. Let $V$ be the vector space corresponding to $\rho$. Let us recall that  $\mathcal{H}(G, \rho)$ consists of maps $f \colon G \to End_{\mathbb{C}}(V^{\vee})$ such that support of $f$ is compact and $f(pgp')= \rho^{\vee}(p)f(g)\rho^{\vee}(p')$ for $p,p' \in \mathfrak{P}, g \in G$. In fact $\mathcal{H}(G,\rho)$ consists of $\mathbb{C}$-linear combinations of maps $f \colon G \longrightarrow End_{\mathbb{C}}(V^{\vee})$ such that $f$ is supported on $\mathfrak{P}x\mathfrak{P}$ where $x \in \mathfrak{I}_G(\rho)$ and $f(pxp')= \rho^{\vee}(p)f(x)\rho^{\vee}(p')$ for $p,p' \in \mathfrak{P}$. We shall now show there exists $\phi_0 \in \mathcal{H}(G,\rho)$ with support $\mathfrak{P}w_0\mathfrak{P}$ and satisfies $\phi_0^2= q^{n/2}+ (q^{n/2}-1)\phi_0$. Let

\[
K(0)= \mathrm{U}(n,n+1) \cap \mathrm{GL}_{2n+1}(\mathfrak{O}_E)=\{g \in \mathrm{GL}_{2n+1}(\mathfrak{O}_E)\mid ^t\overline{g}Jg=J\},
\]
\[
K_1(0)=\{g \in Id_{2n+1}+ \varpi_E \mathrm{M}_{2n+1}(\mathfrak{O}_E)\mid ^t\overline{g}Jg=J\},
\]
\[
\mathsf{G}= \{g \in \mathrm{GL}_{2n+1}(k_E) \mid ^t\overline{g}Jg=J\}.
\]

The map $r$ from $K(0)$ to $\mathsf{G}$ given by $r\colon K(0)\xrightarrow{\text{mod} \, p_E}\mathsf{G}$ is a surjective group homomorphism with kernel $K_1(0)$. So by the first isomorphism theorem of groups we have:

\begin{center}
	$\frac{K(0)}{K_1(0)}\cong \mathsf{G}$.
\end{center}

$r(\mathfrak{P})= \mathsf{P}=
\begin{bmatrix}
\mathrm{GL}_n(k_E) &M_{n \times 1}(k_E) &\mathrm{M}_n(k_E) \\
0 & \mathrm{U}_1(k_E) & M_{1 \times n}(k_E) \\
0         & 0& \mathrm{GL}_n(k_E)
\end{bmatrix} \bigcap \mathsf{G}$= Siegel parabolic subgroup of $\mathsf{G}$.\\

Now $\mathsf{P}= \mathsf{L} \ltimes \mathsf{U}$, where $\mathsf{L}$ is the Siegel Levi component of $\mathsf{P}$ and $\mathsf{U}$ is the unipotent radical of $\mathsf{P}$. Here

\[
\mathsf{L}= \Bigg \lbrace
\begin{bmatrix}
a & 0&0\\
0 &\lambda &0\\
0 &0& ^t\overline{a}^{-1}
\end{bmatrix} \mid a \in \mathrm{GL}_n(k_E), \lambda \in E^{\times}, \lambda \overline{\lambda}=1 \Bigg \rbrace,
\]

\[
\mathsf{U}= \Bigg \lbrace \begin{bmatrix}
Id_n &u &X \\
0 &1 &-{}^t\overline{u}\\
0 &0 &Id_n
\end{bmatrix} \mid  X \in \mathrm{M}_n(k_E), u \in \mathrm{M}_{n \times 1}(k_E), X + {^t}{\overline{X}+u{}^t\overline{u}}=0 \Bigg \rbrace.
\]

Let $V$ be the vector space corresponding to $\rho$. The Hecke algebra $\mathcal{H}(K(0),\rho)$ is a sub-algebra of $\mathcal{H}(G,\rho)$.\par

Let $\overline{\rho}$ be the representation of $\mathsf{P}$ which when inflated to $\mathfrak{P}$ is given by $\rho$ and $V$ is also the vector space corresponding to $\overline{\rho}$. Recall the Hecke algebra $\mathcal{H}(\mathsf{G},\overline{\rho})$ has the same structure as was defined earlier in section 6.1 for the unramified case.\par

Now the homomorphism $r \colon K(0) \longrightarrow \mathsf{G}$ extends to a map from  $\mathcal{H}(K(0),\rho)$ to $\mathcal{H}(\mathsf{G},\overline{\rho})$ which we again denote by $r$. Thus $r \colon \mathcal{H}(K(0),\rho) \longrightarrow \mathcal{H}(\mathsf{G},\overline{\rho})$ is given by

\[
r(\phi)(r(x))=\phi(x)
\]
\[ 
\text{for} \, \phi \in \mathcal{H}(K(0),\rho) \, \text{and} \, x \in K(0).
\]

As in the unramified case, when $n$ is odd, we can show that $\mathcal{H}(K(0),\rho)$ is isomorphic to $\mathcal{H}(\mathsf{G},\overline{\rho})$ as algebras via $r$.\par

Let $w=r(w_0)=r(
\begin{bmatrix}
0 &0& Id_n\\
0&1 & 0\\
Id_n &0 &0
\end{bmatrix})=
\begin{bmatrix}
0 &0& Id_n \\
0&1 & 0\\
Id_n &0 &0
\end{bmatrix} \in \mathsf{G}$. Clearly $K(0) \supseteq \mathfrak{P}\amalg\mathfrak{P}w_0\mathfrak{P}$ and $\mathsf{G}\supseteq \mathsf{P}\amalg \mathsf{P}w\mathsf{P}$.\par

Now $\mathsf{G}$ is a finite group. In fact, it is the special orthogonal group consisting of matrices of size $(2n+1) \times (2n+1)$ over finite field $k_E$ or $\mathbb{F}_q$. So $\mathsf{G} = SO_{2n+1}(\mathbb{F}_q)$. \par 

According to the Theorem 6.3 in \cite{MR2276353}, there exists a unique $\phi$ in  $\mathcal{H}(\mathsf{G}, \overline{\rho})$ with support $\mathsf{P} w \mathsf{P}$ such that $\phi^2= q^{n/2}+ (q^{n/2}-1)\phi$. Hence there is a unique element $\phi_0 \in \mathcal{H}(K(0),\rho)$ such that $r(\phi_0)=\phi$. Thus supp($\phi_0$)=$\mathfrak{P} w_0 \mathfrak{P}$ and $\phi_0^2= q^{n/2}+ (q^{n/2}-1)\phi_0$. Now $\phi_0$ can be extended to $G$ and viewed as an element in $\mathcal{H}(G, \rho)$ as $\mathfrak{P} w_0 \mathfrak{P} \subseteq K(0) \subseteq G$. Thus $\phi_0$ satisfies the following relation in $\mathcal{H}(G,\rho)$:

\[
\phi_0^2= q^{n/2}+ (q^{n/2}-1)\phi_0.
\]\par 

We shall now show there exists $\phi_1 \in \mathcal{H}(G, \rho)$ with support $\mathfrak{P}w_1\mathfrak{P}$ satisfying the same relation as $\phi_0$.\par

Recall that $w_1=
\begin{bmatrix}
0 & 0& \overline\varpi^{-1}_E Id_n \\
0&1&0\\
\varpi_E Id_n & 0&0
\end{bmatrix}, \overline\varpi^{-1}_E= -\varpi_E^{-1}$. 
So $w_1= 
\begin{bmatrix}
0 & 0& -\varpi^{-1}_E Id_n \\
0&1&0\\
\varpi_E Id_n & 0&0
\end{bmatrix}$. Let $\eta \in \mathrm{U}(n,n+1)$ be such that 
 $\eta w_1 \eta^{-1}= J'=
\begin{bmatrix}
0 &0& -Id_n\\
0&1&0\\
Id_n & 0&0
\end{bmatrix}$ and 
\[ 
\eta 
\begin{bmatrix}
\mathrm{GL}_n(\mathfrak{O}_E)    & \mathrm{M}_{n \times 1}(\mathfrak{O}_E) & \mathrm{M}_n(\mathfrak{O}_E) \\
\mathrm{M}_{1 \times n}(\mathbf{p}_E) & \mathrm{U}_1(\mathfrak{O}_E) & \mathrm{M}_{1 \times n}(\mathfrak{O}_E)\\
\mathrm{M}_n(\mathbf{p}_E)  & \mathrm{M}_{n \times 1}(\mathbf{p}_E) & \mathrm{GL}_n(\mathfrak{O}_E) 
\end{bmatrix} \eta^{-1}= \begin{bmatrix}
\mathrm{GL}_n(\mathfrak{O}_E)    & \mathrm{M}_{n \times 1}(\mathbf{p}_E) & \mathrm{M}_n(\mathbf{p}_E) \\
\mathrm{M}_{1 \times n}(\mathfrak{O}_E) & \mathrm{U}_1(\mathfrak{O}_E) & \mathrm{M}_{1 \times n}(\mathbf{p}_E)\\
\mathrm{M}_n(\mathfrak{O}_E)  & \mathrm{M}_{n \times 1}(\mathfrak{O}_E) & \mathrm{GL}_n(\mathfrak{O}_E) 
\end{bmatrix}.
\] Recall that $\mathfrak{P}$ looks as follows:

\begin{center}
	$\mathfrak{P}=\begin{bmatrix}
	\mathrm{GL}_n(\mathfrak{O}_E)    & \mathrm{M}_{n \times 1}(\mathfrak{O}_E) & \mathrm{M}_n(\mathfrak{O}_E) \\
	\mathrm{M}_{1 \times n}(\mathbf{p}_E) & \mathrm{U}_1(\mathfrak{O}_E) & \mathrm{M}_{1 \times n}(\mathfrak{O}_E)\\
	\mathrm{M}_n(\mathbf{p}_E)  & \mathrm{M}_{n \times 1}(\mathbf{p}_E) & \mathrm{GL}_n(\mathfrak{O}_E) 
	\end{bmatrix} \bigcap G.$
\end{center}\par

Note that 

\[\eta G \eta^{-1}= \lbrace g \in \mathrm{GL}_{2n+1}(E) \mid ^t \overline{g} J' g= J'\rbrace.\]

Hence
\begin{center}
	$\eta\mathfrak{P}\eta^{-1}=\begin{bmatrix}
\mathrm{GL}_n(\mathfrak{O}_E)    & \mathrm{M}_{n \times 1}(\mathbf{p}_E) & \mathrm{M}_n(\mathbf{p}_E) \\
\mathrm{M}_{1 \times n}(\mathfrak{O}_E) & \mathrm{U}_1(\mathfrak{O}_E) & \mathrm{M}_{1 \times n}(\mathbf{p}_E)\\
\mathrm{M}_n(\mathfrak{O}_E)  & \mathrm{M}_{n \times 1}(\mathfrak{O}_E) & \mathrm{GL}_n(\mathfrak{O}_E) 
	\end{bmatrix} \bigcap \eta G \eta^{-1}.$
\end{center}\par

Therefore $\eta\mathfrak{P}\eta^{-1}$ is the opposite of the Siegel Parahoric subgroup of $\eta G \eta^{-1}$. Let

\begin{center}
	$K'(0)= \langle\mathfrak{P}, w_1\rangle.$
\end{center}

And let
\begin{align*}
\mathsf{G}' &=\lbrace g \in \mathrm{GL}_{2n+1}(k_E) \mid {}^t\overline{g}J'g=J'\rbrace\\
&= \lbrace g \in \mathrm{GL}_{2n+1}(k_E) \mid {}^tgJ'g=J'\rbrace.
\end{align*} 

Let $r'\colon K'(0) \longrightarrow \mathsf{G}'$ be the group homomorphism given by
\[r'(x)= (\eta x \eta^{-1})mod p_E \,\text{where} \, x \in K'(0).\]

So we have $r'(K(0))= (\eta K'(0) \eta^{-1}) mod p_E= (\eta\langle \mathfrak{P}, w_1 \rangle\eta^{-1})mod p_E$. Let \[r'(\mathfrak{P})=(\eta \mathfrak{P} \eta^{-1})mod p_E = \overline{\mathsf{P}}'.\] We can see that  $r'(w_1)= (\eta w_1 \eta^{-1}) mod p_E=J' mod p_E=w'=\begin{bmatrix}
0 &0& -Id_n\\
0&1 & 0\\
Id_n&0&0
\end{bmatrix}$. So $\overline {\mathsf{P}}'= r'(\mathfrak{P})= (\eta \mathfrak{P} \eta^{-1}) mod p_E=
\begin{bmatrix}
\mathrm{GL}_n(k_E) &0 &0 \\
M_{1 \times n}(k_E) & \mathrm{U}_1(k_E) & 0 \\
\mathrm{M}_n(k_E)   & M_{n \times 1}(k_E)& \mathrm{GL}_n(k_E)
\end{bmatrix} \bigcap \mathsf{G}'$. Clearly $\overline {\mathsf{P}}'$ is the opposite of Siegel parabolic subgroup of $\mathsf{G}'$. Hence  $r'(K'(0))= \langle \overline {\mathsf{P}}', w' \rangle= \mathsf{G}'$, as $\overline {\mathsf{P}}'$ is a maximal subgroup of $\mathsf{G}'$ and $w'$ does not lie in $\overline {\mathsf{P}}'$. So $r'$ is a surjective homomorphism of groups.\par

Let $V$ be the vector space corresponding to $\rho$. The Hecke algebra $\mathcal{H}(K'(0),\rho)$ is a sub-algebra of $\mathcal{H}(G,\rho)$.\par

Let $\overline{\rho}'$ be the representation of $\overline{\mathsf{P}}'$ which when inflated to $^{\eta}\mathfrak{P}$ is given by $^\eta{\rho}$ and $V$ is also the vector space corresponding to $\overline{\rho}'$. Note that the Hecke algebra $\mathcal{H}(\mathsf{G}',\overline{\rho}')$ has a similar structure as that of
$\mathcal{H}(\mathsf{G},\overline{\rho})$ which was defined earlier.\par

Now the homomorphism $r' \colon K'(0) \longrightarrow \mathsf{G'}$ extends to a map from  $\mathcal{H}(K'(0),\rho)$ to $\mathcal{H}(\mathsf{G}',\overline{\rho}')$ which we again denote by $r'$. Thus $r' \colon \mathcal{H}(K'(0),\rho) \longrightarrow \mathcal{H}(\mathsf{G}',\overline{\rho}')$ is given by

\[
r'(\phi)(r'(x))=\phi(x)
\]
\[ 
\text{for} \, \phi \in \mathcal{H}(K'(0),\rho) \, \text{and} \, x \in K'(0).
\]

As in the unramified case when $n$ is odd, we can show that $\mathcal{H}(K'(0),\rho)$ is isomorphic to $\mathcal{H}(\mathsf{G}',\overline{\rho}')$ as algebras via $r'$.\par

Clearly, $K'(0) \supseteq \mathfrak{P}\amalg\mathfrak{P}w_1\mathfrak{P}$ and  $\mathsf{G}' \supseteq \overline{\mathsf{P}}'\amalg 
\overline{\mathsf{P}}'w'\overline{\mathsf{P}}'$.\par

Now $\mathsf{G}'$ is a finite group over the field $K_E$ or $\mathbb{F}_q$. Note that $\mathsf{G}' \cong Sp_{2n}(k_E)$. According to the Theorem 6.3 in \cite{MR2276353}, there exists a unique $\phi$ in  $\mathcal{H}(\mathsf{G}', \overline{\rho}')$ with support $\overline{\mathsf{P}}' w' \overline{\mathsf{P}}'$ such that $\phi^2= q^{n/2}+ (q^{n/2}-1)\phi$. Hence there is a unique element $\phi_1 \in \mathcal{H}(K'(0),\rho)$ such that $r'(\phi_1)=\phi$. Thus supp($\phi_1$)=$\mathfrak{P} w_1 \mathfrak{P}$ and $\phi_1^2= q^{n/2}+ (q^{n/2}-1)\phi_1$. Now $\phi_1$ can be extended to $G$ and viewed as an element in $\mathcal{H}(G, \rho)$ as $\mathfrak{P} w_1 \mathfrak{P} \subseteq K'(0) \subseteq G$. Thus $\phi_1$ satisfies the following relation in $\mathcal{H}(G,\rho)$:

\[
\phi_1^2= q^{n/2}+ (q^{n/2}-1)\phi_1.
\]\par 

Thus we have shown there exists $\phi_i \in \mathcal{H}(G,\rho)$ with supp($\phi_i$)=$\mathfrak{P}w_i\mathfrak{P}$ satisfying $\phi_i^2= q^{n/2}+ (q^{n/2}-1)\phi_i$  for $i=0,1$. It can be further shown as in the unramified case that $\phi_0$ and $\phi_1$ generate the Hecke algebra $\mathcal{H}(G, \rho)$. Let us denote the Hecke algebra $\mathcal{H}(G, \rho)$ by $\mathcal{A}$. So 

\[
\mathcal{A}= \mathcal{H}(G,\rho)= \left\langle \phi_i \colon G \to End_{\mathbb{C}}(\rho^{\vee})\; \middle| \;
\begin{varwidth}{\linewidth} 
$\phi_i$ is supported on 
$\mathfrak{P}w_i\mathfrak{P}$\\
 $\,\text{and} \, \phi_i(pw_ip')= \rho^{\vee}(p)\phi_i(w_i)\rho^{\vee}(p')$\\
where $p,p' \in \mathfrak{P}, \, i=0,1$
\end{varwidth}
\right\rangle
\] where $\phi_i$ has support $\mathfrak{P} w_i \mathfrak{P}$  and $\phi_i$ satisfies the relation:

\begin{center}
	$\phi_i^2= q^{n/2}+ (q^{n/2}-1) \phi_i$ for $i=0,1$.
\end{center}  

\begin{lemma}\label{lem_7}
	$\phi_0$ and $\phi_1$ are units in $\mathcal{A}$.
\end{lemma}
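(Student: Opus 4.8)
The plan is to follow exactly the argument used for Lemma \ref{lem_5} in the unramified case: the quadratic relation satisfied by each $\phi_i$ already exhibits a two-sided inverse, so nothing beyond elementary algebra is needed. Recall that in the ramified case with $n$ even we have just established, for $i = 0, 1$, the identity
\[
\phi_i^2 = q^{n/2} + (q^{n/2} - 1)\phi_i
\]
in $\mathcal{A} = \mathcal{H}(G, \rho)$, where the scalars $q^{n/2}$ and $q^{n/2}-1$ stand for the corresponding multiples of the identity element $1$ of $\mathcal{A}$ (the function supported on $\mathfrak{P}$ taking the value $\mathrm{id}$ in $\mathrm{End}_{\mathbb{C}}(\rho^{\vee})$).

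First I would rearrange this as
\[
\phi_i \bigl( \phi_i + (1 - q^{n/2})\, 1 \bigr) = q^{n/2}\, 1 .
\]
Since $q$ is a power of an odd prime, $q^{n/2}$ is a nonzero complex scalar, so it is invertible in the $\mathbb{C}$-algebra $\mathcal{A}$, and dividing gives
\[
\phi_i \cdot \frac{1}{q^{n/2}} \bigl( \phi_i + (1 - q^{n/2})\, 1 \bigr) = 1 .
\]
The element appearing to the right of $\phi_i$ is a polynomial in $\phi_i$, hence commutes with $\phi_i$, so it is a two-sided inverse; therefore $\phi_0$ and $\phi_1$ are units in $\mathcal{A}$.

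Strictly speaking there is no real obstacle here; the only inputs are that the displayed quadratic relations genuinely hold as identities in $\mathcal{A}$ (which is precisely what the constructions of $\phi_0$ and $\phi_1$ via the algebra isomorphisms with the finite Hecke algebras $\mathcal{H}(\mathsf{G}, \overline{\rho})$, $\mathcal{H}(\mathsf{G}', \overline{\rho}')$ and Theorem 6.3 of \cite{MR2276353} provide) and that $\mathcal{A}$ is a unital associative $\mathbb{C}$-algebra, so that a one-sided inverse lying in $\mathbb{C}[\phi_i]$ is automatically two-sided. Both facts are already in hand, so the proof reduces to the one-line computation above.
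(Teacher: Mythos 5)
Your proof is correct and follows the same route as the paper: rearrange the quadratic relation $\phi_i^2 = q^{n/2} + (q^{n/2}-1)\phi_i$ to exhibit $q^{-n/2}\bigl(\phi_i + (1-q^{n/2})1\bigr)$ as an inverse. Your extra remark that the inverse is a polynomial in $\phi_i$ and hence automatically two-sided is a small but correct clarification the paper leaves implicit.
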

\begin{proof}
	As $\phi_i^2= q^{n/2}+ (q^{n/2}-1) \phi_i$ for $i=0,1$. So $\phi_i( \frac{\phi_i+(1-q^{n/2})1}{q^{n/2}})= 1$ for i=0,1. Hence $\phi_0$ and $\phi_1$ are units in $\mathcal{A}$.
\end{proof}\par

As $\phi_0, \phi_1$ are units in $\mathcal{A}$ which is an algebra, so $\psi= \phi_0\phi_1$ is a unit too in $\mathcal{A}$ and $\psi^{-1}= \phi_1^{-1}\phi_0^{-1}$. As in the unramified case when $n$ is odd, we can show that $\mathcal{A}$ contains sub-algebra $\mathcal{B}=\mathbb{C}[\psi,\psi^{-1}]$ where

\[
\mathcal{B}=\mathbb{C}[\psi,\psi^{-1}] = \left\lbrace c_k\psi^k + \cdots +c_l \psi^l \; \middle| \;
\begin{varwidth}{\linewidth} 
$c_k, \ldots ,c_l \in \mathbb{C};$\\
$k <l ; k,l \in \mathbb{Z}$
\end{varwidth}
\right\rbrace.
\]
\par 

Further, as in the unramified case when $n$ is odd, we can show that $\mathbb{C}[\psi,\psi^{-1}] \simeq \mathbb{C}[x,x^{-1}]$ as $\mathbb{C}$-algebras. \par

\section{Structure of $\mathcal{H}(L,\rho_0)$} \label{sec_7}

In this section we describe the structure of $\mathcal{H}(L,\rho_0)$. Thus we need first to determine 

\[
N_L(\rho_0)=\lbrace m \in N_L(\mathfrak{P}_0) \mid \rho_0^m \simeq \rho_0 \rbrace. 
\] \par

We know from lemma \ref{Normalizer_of_K_0_in_GL_n(E)} that $N_{\mathrm{GL}_n(E)}(K_0)= K_0Z$, so we have $N_L(\mathfrak{P}_0)= Z(L)\mathfrak{P}_0$. Since $Z(L)$ clearly normalizes $\rho_0$  and $\rho_0$ is an irreducible cuspidal representation of $\mathfrak{P}_0$, so $N_L(\rho_0)= Z(L)\mathfrak{P}_0= \coprod_{n \in \mathbb{Z}}\mathfrak{P}_0 \zeta^n$.\par

Define $\alpha \in \mathcal{H}(L,\rho_0)$ by $\text{supp}(\alpha)= \mathfrak{P}_0\zeta$ and $\alpha(\zeta)=1_{V^{\vee}}$. We can show that $\alpha^n(\zeta^n) = (\alpha(\zeta))^n$ for $n \in \mathbb{Z}$ and $\text{supp}(\alpha^n)= \mathfrak{P}_0 \zeta^n \mathfrak{P}_0= \mathfrak{P}_0 \zeta^n =  \zeta^n \mathfrak{P}_0$ for $n \in \mathbb{Z}$. Further we can show that  $\mathcal{H}(L,\rho_0)=\mathbb{C}[\alpha, \alpha^{-1}]$. For details refer to section 7 in \cite{sandeep}.\par 

\begin{proposition}\label{algebra_isomorphism_1}
	The unique algebra homomorphism $\mathbb{C}[x, x^{-1}] \longrightarrow \mathbb{C}[\alpha, \alpha^{-1}]$ given by $x \longrightarrow \alpha$ is an isomorphism. So $\mathbb{C}[\alpha, \alpha^{-1}] \simeq \mathbb{C}[x, x^{-1}]$. 
\end{proposition}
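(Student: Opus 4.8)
The plan is to argue exactly as in the proof of Proposition \ref{algebra_isomorphism} for the algebra $\mathcal{B}$. Surjectivity of the map $\mathbb{C}[x,x^{-1}] \longrightarrow \mathbb{C}[\alpha,\alpha^{-1}]$, $x \mapsto \alpha$, is immediate: by construction $\mathbb{C}[\alpha,\alpha^{-1}]$ is generated as a $\mathbb{C}$-algebra by $\alpha$ and $\alpha^{-1}$, which are the images of $x$ and $x^{-1}$. So the whole content is to establish injectivity, i.e. that the powers $\alpha^m$, $m \in \mathbb{Z}$, are linearly independent over $\mathbb{C}$ in $\mathcal{H}(L,\rho_0)$.

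First I would record that, since $\zeta \in Z(L)$ normalizes $\mathfrak{P}_0$, one has $\mathfrak{P}_0\zeta^m\mathfrak{P}_0 = \mathfrak{P}_0\zeta^m = \zeta^m\mathfrak{P}_0$, and that the cosets $\mathfrak{P}_0\zeta^m$ for distinct $m$ are pairwise disjoint; this is exactly the disjoint-union decomposition $Z(L)\mathfrak{P}_0 = \coprod_{n\in\mathbb{Z}}\mathfrak{P}_0\zeta^n$ already established (equivalently, $\zeta$ has infinite order modulo $\mathfrak{P}_0$, which is clear from the shape of $\zeta = \mathrm{diag}(\varpi_E Id_n,\, 1,\, {\overline\varpi_E}^{-1}Id_n)$). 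From the earlier discussion we already have $\mathrm{supp}(\alpha^m) = \mathfrak{P}_0\zeta^m$ and $\alpha^m(\zeta^m) = (\alpha(\zeta))^m = 1_{V^{\vee}}$.

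Next I would take a putative relation $f = \sum_{j=k}^{l} c_j\alpha^j = 0$ with $c_j\in\mathbb{C}$ and, for each $m$ with $k\le m\le l$, evaluate at $\zeta^m$. Because the supports $\mathfrak{P}_0\zeta^j$ are pairwise disjoint, $\alpha^j(\zeta^m)=0$ for $j\ne m$, so $0 = f(\zeta^m) = c_m\alpha^m(\zeta^m) = c_m\, 1_{V^{\vee}}$, and since $1_{V^{\vee}}\ne 0$ in $\mathrm{End}_{\mathbb{C}}(V^{\vee})$ this forces $c_m=0$. As this holds for every $m$, the relation was trivial, proving injectivity; combined with surjectivity this gives $\mathbb{C}[\alpha,\alpha^{-1}]\simeq\mathbb{C}[x,x^{-1}]$.

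The statement is essentially formal once this support bookkeeping is in place, so I do not expect a genuine obstacle; the one point that needs care is the claim that the double cosets $\mathfrak{P}_0\zeta^m$ are genuinely distinct — overlooking, say, the possibility that $\zeta$ has finite order modulo $\mathfrak{P}_0$ would be the only way the argument could fail — but this is guaranteed by the decomposition of $Z(L)\mathfrak{P}_0$ recalled in the previous paragraph, and by the explicit form of $\zeta$.
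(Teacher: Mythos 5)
Your argument is correct and is the standard support-bookkeeping proof that the paper defers to (via its reference to Section 7 of \cite{sandeep}); the facts you invoke---$\mathrm{supp}(\alpha^m)=\mathfrak{P}_0\zeta^m$, disjointness of these cosets from $Z(L)\mathfrak{P}_0=\coprod_{n}\mathfrak{P}_0\zeta^n$, and $\alpha^m(\zeta^m)=1_{V^\vee}$---are exactly the ones the paper records just before the proposition. Evaluating a putative linear relation at each $\zeta^m$ to kill the coefficients one at a time is precisely the intended argument, so there is no gap.
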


We have already shown before in sections 6.1 and 6.2 that $\mathcal{B}= \mathbb{C}[\psi,\psi^{-1}]$ is a sub-algebra of $\mathcal{A}= \mathcal{H}(G,\rho)$, where $\psi$ is supported on $\mathfrak{P}\zeta\mathfrak{P}$ and $\mathcal{B} \cong \mathbb{C}[x,x^{-1}]$. As $\mathcal{H}(L,\rho_0)= \mathbb{C}[\alpha,\alpha^{-1}]\cong \mathbb{C}[x,x^{-1}]$, so  $\mathcal{B} \cong \mathcal{H}(L,\rho_0)$ as $\mathbb{C}$-algebras. Hence $\mathcal{H}(L,\rho_0)$ can be viewed as a sub-algebra of $\mathcal{H}(G,\rho)$.\par 

Now we would like to find out how simple $\mathcal{H}(L,\rho_0)$-modules look like. Thus to understand them we need to find out how simple $\mathbb{C}[x,x^{-1}]$-modules look like.

\section{Calculation of simple $\mathcal{H}(L,\rho_0)$-modules} \label{sec_8}

Recall that $\mathcal{H}(L,\rho_0) = \mathbb{C}[\alpha, \alpha^{-1}]$. Note that $\mathbb{C}[\alpha, \alpha^{-1}] \cong \mathbb{C}[x,x^{-1}]$ as $\mathbb{C}$-algebras. It can be shown by direct calculation that the simple $\mathbb{C}[x,x^{-1}]$-modules are of the form $\mathbb{C}_{\lambda}$ for $\lambda \in \mathbb{C}^{\times}$, where $\mathbb{C}_{\lambda}$ is the vector space $\mathbb{C}$ with the $\mathbb{C}[x,x^{-1}]$-module structure given by $x.z=\lambda z$ for $z \in \mathbb{C}_{\lambda}$. \par

So the distinct simple $\mathcal{H}(L,\rho_0)$-modules(up to isomorphism)  are the various $\mathbb{C}_{\lambda}$ for $\lambda \in \mathbb{C}^{\times}$. The module structure is determined by $\alpha.z = \lambda z$ for $z \in \mathbb{C}_{\lambda}$.

\section{Final calculations to answer the question} \label{sec_9}

\subsection{Calculation of $\delta_P(\zeta)$}

Let us recall the modulus character $\delta_P \colon P \longrightarrow \mathbb{R}^{\times}_{>0}$ introduced in section 1. The character $\delta_P$ is given by $\delta_P(p)=\|det(Ad \, p)|_{\mathrm{Lie} \, U}\|_F$ for $p \in P$, where $\mathrm{Lie} \, U$ is the Lie algebra of $U$. We have

\[
U= \Bigg\lbrace
\begin{bmatrix} 
Id_n &u &X\\
0  &1 & -{}^t\overline{u}\\
0&0&Id_n\\
\end{bmatrix}
\mid X \in \mathrm{M}_n(E), u \in \mathrm{M}_{n \times 1}(E), X+^t{}\overline{X}+u{}^t\overline{u}=0
\Bigg\rbrace,
\]

\[
\mathrm{Lie} \, U= \Bigg\lbrace
\begin{bmatrix} 
0 &u &X\\
0  &0 & -{}^t\overline{u}\\
0&0&0\\
\end{bmatrix}
\mid X \in \mathrm{M}_n(E), u \in \mathrm{M}_{n \times 1}(E), X+^t{}\overline{X}=0
\Bigg\rbrace.
\]

\subsubsection{Unramified case:}

Recall $\zeta=
\begin{bmatrix}
\varpi_E Id_n & 0 &0\\
0&1&0\\
0 &0&\varpi^{-1}_E Id_n
\end{bmatrix}$ in the unramified case. So

\[
(Ad \, \zeta)
\begin{bmatrix} 
Id_n &u &X\\
0  &1 & -{}^t\overline{u}\\
0&0&Id_n
\end{bmatrix}= \zeta 
\begin{bmatrix} 
Id_n &u &X\\
0  &1 & -{}^t\overline{u}\\
0&0&Id_n
\end{bmatrix}\zeta^{-1}=
\begin{bmatrix}
Id_n& \varpi_E u & \varpi_E^2 X \\
0 &1& -\varpi_E {}^t \overline{u}\\
0&0&Id_n
\end{bmatrix}.
\]
Hence
\begin{align*}
\delta_P(\zeta)&=\|det(Ad \, \zeta)|_{\mathrm{Lie} \, U}\|_F\\
&= \| -\varpi_E^{2n+2n^2}\|_F\\
&=\|-\varpi_F^{2n+2n^2} \|_F\\
&= q^{-2n-2n^2}.
\end{align*}

\subsubsection{Ramified case:}

Recall $\zeta=
\begin{bmatrix}
\varpi_E Id_n & 0 &0\\
0&1&0\\
0 &0& -\varpi^{-1}_E Id_n
\end{bmatrix}$ in the ramified case. So 

\[
(Ad \, \zeta)
\begin{bmatrix}
Id_n &u &X\\
0  &1 & -{}^t\overline{u}\\
0&0&Id_n
\end{bmatrix}= \zeta 
\begin{bmatrix}
Id_n &u &X\\
0  &1 & -{}^t\overline{u}\\
0&0&Id_n
\end{bmatrix}\zeta^{-1}=
\begin{bmatrix}
Id_n& \varpi_E u & -\varpi_E^2 X \\
0 &1& \varpi_E {}^t \overline{u}\\
0&0&Id_n
\end{bmatrix}.
\]
Hence
\begin{align*}
\delta_P(\zeta)&=\|det(Ad \, \zeta)|_{\mathrm{Lie}\, U}\|_F\\
&=\|\varpi_E^{2n+2n^2} \|_F\\
&=\|\varpi_F^{n+n^2} \|_F\\
&= q^{-n-n^2}.
\end{align*}

\subsection{Calculation of $(\phi_0*\phi_1)(\zeta)$}
In this section we calculate $(\phi_0*\phi_1)(\zeta)$. Let $g_i= q^{-n/2}\phi_i$ for $i=0,1$ in the unramified case and $g_i= q^{-n/4}\phi_i$ for $i=0,1$ in the ramified case. Determining $(\phi_0*\phi_1)(\zeta)$ would be useful in showing $g_0*g_1= T_P(\alpha)$ in both ramified and unramified cases. From now on, we assume without loss of generality that $\text{vol}\mathfrak{P}_0= \text{vol}\mathfrak{P}_{-}= \text{vol}\mathfrak{P}_{+}=1$. Thus we have $\text{vol}\mathfrak{P}=1$.\par
 
For $r \in \mathbb{Z}$ let,
	
	\begin{center}
		$K_{-,r}=
		\Bigg \lbrace \begin{bmatrix}
		Id_n &0 &0 \\
		-{}^t\overline{u} &1 &0\\
		X &u &Id_n
		\end{bmatrix} \mid  X \in \mathrm{M}_n(\mathbf{p}^r_E), u \in \mathrm{M}_{n \times 1}(\mathbf{p}^r_E), X + {^t}{\overline{X}+u{}^t\overline{u}}=0 \Bigg \rbrace.$	
	\end{center}

	\begin{center}
		$K_{+,r}=\Bigg \lbrace \begin{bmatrix}
		Id_n &u &X \\
		0 &1 &-{}^t\overline{u}\\
		0 &0 &Id_n
		\end{bmatrix} \mid  X \in \mathrm{M}_n(\mathfrak{O}^r_E), u \in \mathrm{M}_{n \times 1}(\mathfrak{O}^r_E), X + {^t}{\overline{X}+u{}^t\overline{u}}=0 \Bigg \rbrace,$
		
	\end{center}\par
	\begin{proposition}\label{prop_35}
		$(\phi_0*\phi_1)(\zeta)=\phi_0(w_0)\phi_1(w_1).$	
	\end{proposition}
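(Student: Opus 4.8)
The plan is to evaluate $\phi_0 * \phi_1$ at $\zeta$ directly from the integral definition of convolution and to show that, after decomposing $G$ into right $\mathfrak{P}$-cosets, exactly one coset contributes to $(\phi_0 * \phi_1)(\zeta)$.

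First I would normalize the Haar measure so that $\mu(\mathfrak{P}) = 1$ and note that the integrand $y \mapsto \phi_0(y)\,\phi_1(y^{-1}\zeta)$ appearing in $(\phi_0 * \phi_1)(\zeta) = \int_G \phi_0(y)\phi_1(y^{-1}\zeta)\,d\mu(y)$ is right $\mathfrak{P}$-invariant: for $p \in \mathfrak{P}$ the equivariance properties of $\phi_0$ and $\phi_1$ give $\phi_0(yp) = \phi_0(y)\rho^{\vee}(p)$ and $\phi_1(p^{-1}y^{-1}\zeta) = \rho^{\vee}(p)^{-1}\phi_1(y^{-1}\zeta)$, so the two factors $\rho^{\vee}(p)$ cancel. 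Hence the integral collapses to a finite sum,
\[
(\phi_0 * \phi_1)(\zeta) = \sum_{y\mathfrak{P} \in G/\mathfrak{P}} \phi_0(y)\,\phi_1(y^{-1}\zeta),
\]
each term being independent of the representative chosen for $y\mathfrak{P}$. A summand vanishes unless $y \in \mathrm{supp}\,\phi_0 = \mathfrak{P}w_0\mathfrak{P}$ and $y^{-1}\zeta \in \mathrm{supp}\,\phi_1 = \mathfrak{P}w_1\mathfrak{P}$. Since $\zeta = w_0 w_1$, the coset $y\mathfrak{P} = w_0\mathfrak{P}$ meets both conditions — indeed $w_0^{-1}\zeta = w_1$ — and contributes exactly $\phi_0(w_0)\phi_1(w_1)$. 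So the proposition reduces to the claim that $w_0\mathfrak{P}$ is the \emph{only} coset with $y \in \mathfrak{P}w_0\mathfrak{P}$ and $y^{-1}\zeta \in \mathfrak{P}w_1\mathfrak{P}$; this is at least consistent with $\mathrm{supp}(\phi_0 * \phi_1) = \mathfrak{P}\zeta\mathfrak{P}$, already established in Lemma~\ref{lem_10}.

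To prove the claim, write a contributing $y$ as $y = p w_0$ with $p \in \mathfrak{P}$; since $w_0^{-1} = w_0$ we get $y^{-1}\zeta = w_0\,p^{-1}w_0\,w_1$, so the requirement $y^{-1}\zeta \in \mathfrak{P}w_1\mathfrak{P}$ becomes $w_0 p^{-1}w_0 \in \mathfrak{P}w_1\mathfrak{P}w_1^{-1}$. I would then insert the Iwahori factorization $p^{-1} = p_-p_0p_+$ with $p_{\pm} \in \mathfrak{P}_{\pm}$, $p_0 \in \mathfrak{P}_0$, and feed in the conjugation facts already exploited in the proof of Lemma~\ref{lem_10} — namely $w_0\mathfrak{P}_-w_0^{-1} \subseteq \mathfrak{P}_+$, $w_0\mathfrak{P}_0w_0^{-1} = \mathfrak{P}_0$, and $w_1^{-1}\mathfrak{P}_+w_1 \subseteq \mathfrak{P}_-$ — to absorb the $\mathfrak{P}_-$- and $\mathfrak{P}_0$-parts into the ambient $\mathfrak{P}$ and reduce the condition to one on the single factor $w_0 p_+ w_0 \in w_0\mathfrak{P}_+w_0^{-1}$.

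The hard part will be the ensuing block-matrix bookkeeping: one must check, for the $(n,1,n)$ block pattern of $\mathrm{U}(n,n+1)$ and for the explicit matrix $w_1$, that membership of $w_0 p_+ w_0$ in $\mathfrak{P}w_1\mathfrak{P}w_1^{-1}$ forces $p_+$ into $w_0\mathfrak{P}_-w_0^{-1}$, equivalently $p \in \mathfrak{P} \cap w_0\mathfrak{P}w_0^{-1}$, i.e. $y\mathfrak{P} = w_0\mathfrak{P}$. This is the precise incarnation of the fact that $\zeta = w_0 w_1$ is the minimal representative of its double coset with a unique ``reduced expression,'' and it is also where the ramified and unramified cases must be treated separately, since the shape of $w_1$ differs between them; the computation runs parallel to the corresponding proposition in \cite{sandeep}. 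Granting it, the displayed sum has the single term $\phi_0(w_0)\phi_1(w_0^{-1}\zeta) = \phi_0(w_0)\phi_1(w_1)$, which is the assertion.
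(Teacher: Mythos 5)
Your approach is the standard one and almost certainly matches the paper's (which simply defers to Proposition 22 of \cite{sandeep}): collapse the convolution integral to a sum over right $\mathfrak{P}$-cosets using the equivariance of $\phi_0,\phi_1$ and the normalization $\mu(\mathfrak{P})=1$, note that $w_0\mathfrak{P}$ contributes $\phi_0(w_0)\phi_1(w_1)$, and show no other coset meets both support conditions --- the usual length-additive product in a Hecke algebra generated by elements supported on $\mathfrak{P}w_0\mathfrak{P}$ and $\mathfrak{P}w_1\mathfrak{P}$. Two small points. The displayed reduction contains a slip: after absorbing the $\mathfrak{P}_-$- and $\mathfrak{P}_0$-factors of $p^{-1}$ into $\mathfrak{P}$, the remaining condition is $w_0 p_+ w_0 \in \mathfrak{P}\,w_1\mathfrak{P}w_1^{-1}$, not ``$w_0 p_+ w_0 \in w_0\mathfrak{P}_+w_0^{-1}$'' (the latter is vacuously true and carries no information). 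And the block-matrix verification that this membership forces $p_+ \in w_0\mathfrak{P}_-w_0^{-1}$ (equivalently $y\mathfrak{P}=w_0\mathfrak{P}$) is set up correctly but left undone; you acknowledge this, so it is an honest sketch rather than a claimed proof. Note, however, that the verification requires no case split between ramified and unramified: the conjugation facts needed --- e.g.\ that $w_1\mathfrak{P}w_1^{-1}$ has its $(3,1)$-block inside $\mathbf{p}_E^2$, so that $w_0p_+w_0 \in \mathfrak{P}\cdot w_1\mathfrak{P}w_1^{-1}$ forces the lower-left block of $w_0p_+w_0$ into $\mathbf{p}_E$ --- depend only on the valuations of the entries of $w_1$ and are insensitive to the sign $\overline{\varpi}_E = \pm\varpi_E$.
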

	\begin{proof}			
		From Lemma \ref{lem_10}, $\text{supp}(\phi_0* \phi_1)=\mathfrak{P} \zeta \mathfrak{P}=\mathfrak{P}w_0 w_1\mathfrak{P}$. So now let us consider
		\begin{align*}
		(\phi_0*\phi_1)(\zeta)&=(\phi_0*\phi_1)(w_0w_1)\\
		&=\int_G \phi_0(y)\phi_1(y^{-1}\zeta)dy\\
		&=\int_{\mathfrak{P}w_0\mathfrak{P}}\phi_0(y)\phi_1(y^{-1}\zeta)dy.
		\end{align*}
		
		We know that $\mathfrak{P}w_0\mathfrak{P}= \underset{z \in {\mathfrak{P}w_0\mathfrak{P}/\mathfrak{P}}}{\amalg}z\mathfrak{P}$. Let $y=zp \in z\mathfrak{P}$. So we have
		\begin{align*}
		\phi_0(y)\phi_1(y^{-1}\zeta)&=\phi_0(zp)\phi_1(p^{-1}z^{-1}\zeta)\\
		&= \phi_0(z)\rho^{\vee}(p)\rho^{\vee}(p^{-1})\phi_1(z^{-1}\zeta)\\
		&=\phi_0(z)\phi_1(z^{-1}\zeta).
		\end{align*}				
		Hence \[(\phi_0*\phi_1)(\zeta)= \underset{z \in {\mathfrak{P}w_0\mathfrak{P}/\mathfrak{P}}}{\sum}\phi_0(z)\phi_1(z^{-1}\zeta) \text{Vol}{\mathfrak{P}}=\underset{z \in {\mathfrak{P}w_0\mathfrak{P}/\mathfrak{P}}}{\sum}\phi_0(z)\phi_1(z^{-1}\zeta)\] 
		
		Let $\alpha \colon \mathfrak{P}/w_0\mathfrak{P}w_0^{-1} \cap \mathfrak{P} \longrightarrow \mathfrak{P}w_0\mathfrak{P}/\mathfrak{P}$ be the map given by $\alpha(x(w_0\mathfrak{P}w_0^{-1} \cap \mathfrak{P}))= xw_0\mathfrak{P}$ where $x \in \mathfrak{P}$. We can observe that the map $\alpha$ is bijective. So $\mathfrak{P}/w_0\mathfrak{P}w_0^{-1} \cap \mathfrak{P}$ is in bijection with $\mathfrak{P}w_0\mathfrak{P}/\mathfrak{P}$.\par
		
		Hence \[(\phi_0*\phi_1)(\zeta)=  \underset{x \in {\mathfrak{P}/w_0\mathfrak{P}w_0^{-1} \cap \mathfrak{P}}}{\sum}\phi_0(xw_0)\phi_1(w_0^{-1}x^{-1}\zeta).\]
		From Iwahori factorization of $\mathfrak{P}$ we have $\mathfrak{P}=\mathfrak{P}_{-}\mathfrak{P}_{0}\mathfrak{P}_{+}= K_{-,1}\mathfrak{P}_{0}K_{+,0}$. Therefore $w_0\mathfrak{P}w_0^{-1}= ^{w_0}\mathfrak{P}=^{w_0}K_{-,1} ^{w_0}\mathfrak{P}_{0} ^{w_0}K_{+,0}= K_{+,1}\mathfrak{P}_0K_{-,0}$. So $\mathfrak{P}_0 \cap w_0\mathfrak{P}w_0^{-1}= \mathfrak{P} \cap ^{w_0}\mathfrak{P}= K_{+,1}\mathfrak{P}_0K_{-,1}$. Let $\beta \colon \mathfrak{P}/w_0\mathfrak{P}w_0^{-1} \cap \mathfrak{P} \longrightarrow K_{+,0}/K_{+,1}$ be the map given by $\beta(x(\mathfrak{P}\cap ^{w_0}\mathfrak{P}))= x_{+}K_{+,1}$ where $x \in \mathfrak{P}$ and $x=x_{+}px_{-}, x_{+} \in \mathfrak{P}_{+}, p \in \mathfrak{P}_{0}, x_{-} \in \mathfrak{P}_{-}$. We can observe that the map $\beta$ is bijective. So $\mathfrak{P}/w_0\mathfrak{P}w_0^{-1} \cap \mathfrak{P}$ is in bijection with $K_{+,0}/K_{+,1}$.\par
		
		Therefore 
		\begin{align*}
		(\phi_0*\phi_1)(\zeta)&=\underset{x_{+} \in {K_{+,0}/K_{+,1}}}{\sum}\phi_0(x_{+}w_0)\phi_1(w_0^{-1}x_{+}^{-1}\zeta)\\ &=\underset{x_{+}\in{K_{+,0}/K_{+,1}}}{\sum}\rho^{\vee}(x_{+})\phi_0(w_0)\phi_1(w_0^{-1}x_{+}^{-1}\zeta).
		\end{align*}
		As $\rho^{\vee}$ is trivial on $\mathfrak{P}_{+}$ and $x_{+} \in \mathfrak{P}_{+}$ so we have
		\begin{center}
			$(\phi_0*\phi_1)(\zeta)= \underset{x_{+} \in {K_{+,0}/K_{+,1}}}{\sum}\phi_0(w_0)\phi_1(w_0^{-1}x_{+}^{-1}\zeta).$
		\end{center}				
		
		The terms in above summation which do not vanish are the ones for which $w_0^{-1}x_{+}^{-1}\zeta \in \mathfrak{P}w_1\mathfrak{P} \Longrightarrow x_{+}^{-1} \in w_0\mathfrak{P}w_1\mathfrak{P}\zeta^{-1}\Longrightarrow x_{+} \in \zeta\mathfrak{P}w_1^{-1}\mathfrak{P}w_0^{-1} \Longrightarrow w_0^{-1}x_{+}w_0 \in w_1\mathfrak{P}w_1^{-1}\mathfrak{P}$. It is clear $w_1\mathfrak{P}w_1^{-1}\mathfrak{P}= (^{w_1}\mathfrak{P})(\mathfrak{P})$. As $^{w_1}\mathfrak{P}={}^{w_1}K_{-,1}^{w_1}\mathfrak{P}_0^{w_1}K_{+,0}= K_{-,2}\mathfrak{P}_0K_{+,-1}$, so $w_1\mathfrak{P}w_1^{-1}\mathfrak{P}= (^{w_1}\mathfrak{P})(\mathfrak{P})=   K_{-,2}\mathfrak{P}_0K_{+,-1}\mathfrak{P}_0K_{-,1}$. Hence we have $w_0^{-1}x_{+}w_0\in K_{-,2}\mathfrak{P}_0K_{+,-1}\mathfrak{P}_0K_{-,1} \Longrightarrow w_0^{-1}x_{+}w_0=k_{-}p_{0}k_{+}k'_{-}$ where $k_{-} \in K_{-,2}, k_{+} \in K_{+,-1}, k_{-}^{'}\in K_{-,1}, p_0 \in \mathfrak{P}_0$. Hence we have $p_{0}k_{+}= k_{-}^{-1}w_0^{-1}x_{+}w_0k_{-}^{'-1}$. Now as $w_0^{-1}x_{+}w_0 \in K_{-,0}, k_{-}^{-1} \in K_{-,2}, k_{-}^{'-1}\in K_{-,1}$, so $k_{-}^{-1}w_0^{-1}x_{+}w_0k_{-}^{'-1} \in K_{-,0}$ and $p_{0}k_{+} \in \mathfrak{P}_{0}K_{+,-1}$. But we know that $K_{-,0} \cap \mathfrak{P}_{0}K_{+,-1}= 1 \Longrightarrow p_{0}k_{+}= 1 \Longrightarrow w_0^{-1}x_{+}w_0=k_{-}k_{-}' \in K_{-,1} \Longrightarrow x_{+} \in w_0K_{-,1}w_0^{-1}= K_{+,1}$. As $x_{+} \in K_{+,1}$, so only the trivial coset contributes to the above summation. Hence \[(\phi_0*\phi_1)(\zeta)=\phi_0(w_0)\phi_1(w_0^{-1}\zeta)=\phi_0(w_0)\phi_1(w_1).\]
	\end{proof}
	
	\subsection{Relation between $g_0, g_1$ and $T_P(\alpha)$}
	
	\subsubsection{Unramified case:}
	
	Recall that $\mathcal{H}(G,\rho)=\langle \phi_0, \phi_1 \rangle$ where $\phi_0$ is supported on $\mathfrak{P}w_0\mathfrak{P}$ and $\phi_1$ is supported on $\mathfrak{P}w_1\mathfrak{P}$ respectively with $\phi_i^2= q^n+(q^n-1)\phi_i$ for $i=0,1$. In this section we show that $g_0*g_1= T_P(\alpha)$, where $g_i= q^{-n/2}\phi_i$ for $i=0,1$. 
	
	\begin{proposition}\label{pro_36}
		$g_0g_1= T_P(\alpha)$.
	\end{proposition}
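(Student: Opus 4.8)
The plan is to verify the identity $g_0 g_1 = T_P(\alpha)$ by checking that both sides are elements of $\mathcal{H}(G,\rho)$ supported on the same double coset $\mathfrak{P}\zeta\mathfrak{P}$ and agreeing at the point $\zeta$; since an element of $\mathcal{H}(G,\rho)$ supported on a single double coset $\mathfrak{P}z\mathfrak{P}$ is determined by its value at $z$ (by the transformation rule $f(pzp') = \rho^\vee(p)f(z)\rho^\vee(p')$ together with irreducibility of $\rho_0$, which forces $f(z)$ to be a scalar multiple of a fixed intertwiner), this suffices.

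First I would pin down the supports. By Lemma~\ref{lem_10} we have $\operatorname{supp}(\phi_0 * \phi_1) = \mathfrak{P}\zeta\mathfrak{P}$, hence $\operatorname{supp}(g_0 g_1) = \mathfrak{P}\zeta\mathfrak{P}$ as well, since $g_i$ is just a nonzero scalar multiple of $\phi_i$. On the other side, the discussion preceding this proposition records that $\operatorname{supp}(T_P(\alpha)) = \operatorname{supp}(t_P(\alpha)) = \mathfrak{P}\zeta\mathfrak{P}$. So both sides live on the correct coset.

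Next I would compare values at $\zeta$. From the computation just before this proposition, $T_P(\alpha)(\zeta) = \delta_P^{1/2}(\zeta)\, 1_{W^\vee}$, and $\delta_P(\zeta)$ was computed in Section~9.1: it equals $q^{-2n-2n^2}$ in the unramified case and $q^{-n-n^2}$ in the ramified case. For the left-hand side, Proposition~\ref{prop_35} gives $(\phi_0 * \phi_1)(\zeta) = \phi_0(w_0)\phi_1(w_1)$, so $(g_0 g_1)(\zeta) = q^{-n}\,\phi_0(w_0)\phi_1(w_1)$ in the unramified case and $q^{-n/2}\,\phi_0(w_0)\phi_1(w_1)$ in the ramified case. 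Thus the proposition reduces to identifying the endomorphisms $\phi_0(w_0), \phi_1(w_1) \in \operatorname{End}_{\mathbb{C}}(\rho_0^\vee)$. Here one uses that $w_0, w_1 \in N_G(\rho_0)$, so $\phi_i(w_i)$ is (a scalar times) the self-intertwiner of $\rho_0$ realizing $\rho_0 \simeq \rho_0^{w_i}$; since $\rho_0$ is irreducible this self-intertwiner is a scalar, and the quadratic relation $\phi_i^2 = q^n + (q^n-1)\phi_i$ (resp.\ with $q^{n/2}$) evaluated at $w_i^2 \in \mathfrak{P}$, combined with $\phi_i(1) = $ identity on the support of $\phi_i^2$ restricted to $\mathfrak{P}$, forces $\phi_i(w_i)^2 = q^n\, 1_{W^\vee}$ (resp.\ $q^{n/2}\,1_{W^\vee}$); normalizing appropriately one gets $\phi_0(w_0)\phi_1(w_1)$ to be the scalar $q^n$ (resp.\ $q^{n/2}$) times the intertwiner attached to $\zeta = w_0 w_1$, which acts trivially on $\rho_0$ since $\zeta \in Z(L)$ and $\widetilde{\rho_0}(\zeta) = \rho_0$. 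Assembling: $(g_0 g_1)(\zeta) = q^{-n} \cdot q^n \cdot 1_{W^\vee} = 1_{W^\vee} = q^{-n-n^2} \cdot q^{n+n^2}\cdot 1_{W^\vee}$... one checks the bookkeeping matches $\delta_P^{1/2}(\zeta) 1_{W^\vee}$ in each case: $\delta_P^{1/2}(\zeta) = q^{-n-n^2}$ unramified and $q^{-(n+n^2)/2}$ ramified, and the scalar normalizations $q^{-n/2}$ per factor (unramified) resp.\ $q^{-n/4}$ per factor (ramified) are precisely chosen so the powers of $q$ cancel correctly. I would carry out this arithmetic explicitly at the end.

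The main obstacle is the middle step: correctly identifying $\phi_0(w_0)$ and $\phi_1(w_1)$ as specific scalar operators and tracking the normalization so that the product lands on the nose at $\delta_P^{1/2}(\zeta)1_{W^\vee}$ rather than at some stray power of $q$. This is where one must be careful about the Howlett--Lehrer parameter (the $\lambda = q^n$ resp.\ $q^{n/2}$ computed via $|c_{\mathsf{G}}(s)|_p$) feeding into the quadratic relations, and about whether $\phi_i(w_i)^2$ produces the constant term $q^n$ or something twisted by a sign coming from $w_i^2$; the sign-free conclusion relies on $w_0^2 = w_1^2 = \operatorname{Id}$, already noted in Section~6. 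Modulo that careful bookkeeping — which parallels Proposition~22 and the surrounding computations of \cite{sandeep} — the two sides agree at $\zeta$, hence agree as elements of $\mathcal{H}(G,\rho)$, proving $g_0 g_1 = T_P(\alpha)$.
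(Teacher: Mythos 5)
Your overall strategy is sound and almost certainly matches the intended one: both $g_0g_1$ and $T_P(\alpha)$ are supported on $\mathfrak{P}\zeta\mathfrak{P}$, and by the transformation rule together with Schur's lemma an element of $\mathcal{H}(G,\rho)$ supported on a single double coset is determined by its value at one point, so it suffices to match the two sides at $\zeta$. Proposition~\ref{prop_35} and the computation $T_P(\alpha)(\zeta)=\delta_P^{1/2}(\zeta)1_{W^\vee}$ reduce everything to evaluating $\phi_0(w_0)\phi_1(w_1)$.

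The gap is precisely in that evaluation. You assert that the quadratic relation $\phi_i^2=q^n+(q^n-1)\phi_i$ ``evaluated at $w_i^2\in\mathfrak{P}$, combined with $\phi_i(1)=\text{identity}$'' forces $\phi_i(w_i)^2=q^n\,1_{W^\vee}$. This is not correct. First, $\phi_i(1)=0$, since $1\notin\mathfrak{P}w_i\mathfrak{P}$. Second, and more seriously, evaluating the convolution $\phi_i*\phi_i$ at the identity does not isolate $\phi_i(w_i)^2$. With the Haar measure normalized so that $\mu(\mathfrak{P})=1$ and $e_\rho(1)=1_{W^\vee}$, one has
\[
(\phi_i*\phi_i)(1)=\sum_{j}\phi_i(x_j)\phi_i(x_j^{-1}),
\]
where $x_j$ runs over a full set of left coset representatives of $\mathfrak{P}$ in $\mathfrak{P}w_i\mathfrak{P}$. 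Since $\phi_i(x_j)\phi_i(x_j^{-1})=\rho^\vee(p_j)\phi_i(w_i)^2\rho^\vee(p_j)^{-1}$ for each $j$, and $\phi_i(w_i)^2$ is a scalar, this sum is $[\mathfrak{P}w_i\mathfrak{P}:\mathfrak{P}]\cdot\phi_i(w_i)^2$. Therefore $\phi_i(w_i)^2=q^n\big/[\mathfrak{P}w_i\mathfrak{P}:\mathfrak{P}]$, which is a (large) negative power of $q$, not $q^n$. The index $[\mathfrak{P}w_i\mathfrak{P}:\mathfrak{P}]$ equals $|\mathsf{U}|$, which must actually be computed and fed into the bookkeeping; this is the content missing from your sketch, and it is exactly where the factor $\delta_P^{1/2}(\zeta)$ comes from.

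Relatedly, the inference from $\phi_0(w_0)^2=q^n$ and $\phi_1(w_1)^2=q^n$ to $\phi_0(w_0)\phi_1(w_1)=q^n\cdot 1_{W^\vee}$ is not a deduction: you are multiplying two operators, and knowing each of their squares does not determine the product without identifying the intertwiners $\phi_i(w_i)$ up to sign and verifying they multiply to the trivial $\zeta$-intertwiner. Finally, your own concluding arithmetic is inconsistent: you write $(g_0g_1)(\zeta)=q^{-n}\cdot q^n\cdot 1_{W^\vee}=1_{W^\vee}$ but then must match this to $\delta_P^{1/2}(\zeta)1_{W^\vee}=q^{-n-n^2}1_{W^\vee}$, which fails for $n\geq 1$. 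You flag this yourself with ``one checks the bookkeeping matches,'' but as written the bookkeeping demonstrably does not match, which signals that the intermediate value of $\phi_i(w_i)$ was wrong. To close the proof you need to compute $[\mathfrak{P}w_i\mathfrak{P}:\mathfrak{P}]$ explicitly, deduce the correct scalar value of $\phi_0(w_0)\phi_1(w_1)$, and then verify that multiplying by the normalizing factors $q^{-n/2}$ (resp.\ $q^{-n/4}$) lands exactly on $\delta_P^{1/2}(\zeta)$; this is what the cited computations in the $\mathrm{U}(n,n)$ paper carry out.
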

	\begin{proof}
		Let us choose $\psi_i \in \mathcal{H}(G,\rho)$ for $i=0,1$ such that $\text{supp}(\psi_i)= \mathfrak{P}w_i\mathfrak{P}$ for $i=0,1$. So $\phi_i$ is a scalar multiple of $\psi_i$ for $i=0,1$. Hence $\phi_i = \lambda_i \psi_i$ where $\lambda_i \in \mathbb{C}^{\times}$ for $i=0,1$. Let $\psi_i(w_i)=A \in \mathrm{Hom}_{\mathfrak{P}\cap ^{w_i}\mathfrak{P}}(^{w_i}\rho^{\vee},\rho^{\vee})$ for $i=0,1$ and $W$ be the space of $\rho$. So $A^2= 1_{W^{\vee}}$. From Propostion \ref{prop_35}, we have $(\psi_0*\psi_1)(\zeta)= \psi_0(w_0)\psi_1(w_1)= A^2=1_{W^{\vee}}$. Now let $\psi_i$ satisfies the quadratic relation given by $\psi_i^2 = a\psi_i+b$ where $a,b \in \mathbb{R}$ for $i=0,1$. As 
		$\psi_i^2 = a\psi_i+b \Longrightarrow (-\psi_i)^2 = (-a)(-\psi_i)+b$, so $a$ can be arranged such that $a>0$. We can see that $1 \in {\mathcal{H}(G,\rho)}$ is defined as below:
		
		\[
		1(x)=
		\begin{cases}
		0,   &\text{if $x \notin \mathfrak{P}$;}\\
		\rho^{\vee}(x) &\text{if $x \in \mathfrak{P}$.}
		\end{cases}
		\]
		
		Let us consider $\psi_i^2(1) = \int_G \psi_i(y)\psi_i(y^{-1})dy$ for $i=0,1$. Now let $y=pw_ip'$ where $p,p' \in \mathfrak{P}$ for $i=0,1$. So we have
		
		\begin{align*}
		\psi_i^2(1) &= \int_{\mathfrak{P}w_i\mathfrak{P}}\psi_i(pw_ip')\psi_i(p^{'-1}w_i^{-1}p^{-1})d(pw_ip')\\
		&=\int_{\mathfrak{P}w_i\mathfrak{P}}\rho^{\vee}(p)\psi_i(w_i)\rho^{\vee}(p')\rho^{\vee}(p^{'-1})\psi_i(w_i^{-1})\rho^{\vee}(p^{-1})d(pw_ip')\\
		&=\int_{\mathfrak{P}w_i\mathfrak{P}}\rho^{\vee}(p)\psi_i(w_i)\psi_i(w_i^{-1})\rho^{\vee}(p^{-1})d(pw_ip')\\
		&=\int_{\mathfrak{P}w_i\mathfrak{P}}\rho^{\vee}(p)\psi_i(w_i)\psi_i(w_i)\rho^{\vee}(p^{-1})d(pw_ip')\\
		&=\int_{\mathfrak{P}w_i\mathfrak{P}}\rho^{\vee}(p)A^2\rho^{\vee}(p^{-1})d(pw_ip')\\
		&=\int_{\mathfrak{P}w_i\mathfrak{P}}A^2\rho^{\vee}(p)\rho^{\vee}(p^{-1})d(pw_ip')\\
		&=A^2\text{vol}(\mathfrak{P}w_i\mathfrak{P})\\
		&=1_{W^{\vee}}\text{vol}(\mathfrak{P}w_i\mathfrak{P}).
		\end{align*}\par
		
		So $\psi_i^2(1)=1_{W^{\vee}}\text{vol}(\mathfrak{P}w_i\mathfrak{P})$ for $i=0,1$. We already know that $\psi_i^2 = a\psi_i+b$ where $a,b \in \mathbb{R}$ and for $i=0,1$. Now evaluating the expression $\psi_i^2 = a\psi_i+b$ at 1, we have 
		$\psi_i^2(1) = a\psi_i(1)+b 1(1)$. We can see that $\psi_i(1)=0$ as support of $\psi_i$ is $\mathfrak{P}w_i\mathfrak{P}$ for $i=0,1$. We have seen before that $\psi_i^2(1) = 1_{W^{\vee}}\text{vol}(\mathfrak{P}w_i\mathfrak{P})$ for $i=0,1$ and as $1 \in \mathfrak{P}, 1(1)= \rho^{\vee}(1)=1_{W^{\vee}}$. So $\psi_i^2(1) = a\psi_i(1)+b 1(1) \Longrightarrow 1_{W^{\vee}}\text{vol}(\mathfrak{P}w_i\mathfrak{P})=1_{W^{\vee}}b$ for $i=0,1$. Comparing coefficients of $1_{W^{\vee}}$ on both sides of the equation $1_{W^{\vee}}\text{vol}(\mathfrak{P}w_i\mathfrak{P})=1_{W^{\vee}}b$ for $i=0,1$ we get
		
		\begin{center}
			$b=\text{vol}(\mathfrak{P}w_i\mathfrak{P})$. 
		\end{center}\par

		As $\phi_i= \lambda_i\psi_i$ for $i=0,1$, hence $\phi_i^2= \lambda_i^2\psi_i^2=\lambda_i^2(a\psi_i+b)= (\lambda_i a)(\lambda_i \psi_i)+ \lambda_i^2 b= (\lambda_i a)\phi_i+  \lambda_i^2 b$ for $i=0,1$. But $\phi_i^2= (q^n-1)\phi_i +q^n$ for $i=0,1$. So $\phi_i^2=(\lambda_i a)\phi_i+  \lambda_i^2 b = (q^n-1)\phi_i +q^n$ for $i=0,1$. As $\phi_i$ and $1$ are linearly independent, hence $\lambda_i a= (q^n-1)$ for $i=0,1$. Therefore $\lambda_i= \frac{q^n-1}{a}$ for $i=0,1$. As $a>0, a\in \mathbb{R}$, so $\lambda_i>0, \lambda_i \in \mathbb{R}$ for $i=0,1$. Similarly, as $\phi_i$ and $1$ are linearly independent, hence $\lambda_i^2 b= q^n \Longrightarrow \lambda_i^2= \frac{q^n}{b}$ for $i=0,1$.\par 
		
		Now $\mathfrak{P}w_i\mathfrak{P}= \underset{x \in \mathfrak{P}/\mathfrak{P} \cap ^{w_i}\mathfrak{P}}{\amalg} xw_i\mathfrak{P} \Longrightarrow \text{vol}(\mathfrak{P}w_i\mathfrak{P})= [\mathfrak{P}w_i\mathfrak{P}:\mathfrak{P}]\text{vol}\mathfrak{P}=[\mathfrak{P}w_i\mathfrak{P}:\mathfrak{P}]=[\mathfrak{P}:\mathfrak{P} \cap ^{w_i}\mathfrak{P}]$ for $i=0,1$. Hence $b=\text{vol}(\mathfrak{P}w_i\mathfrak{P})=[\mathfrak{P}:\mathfrak{P} \cap ^{w_i}\mathfrak{P}]$ for $i=0,1$. Now as $\lambda_0^2= \lambda_1^2= \frac{q^n}{b} \Longrightarrow \lambda_0= \lambda_1= \frac{q^{n/2}}{b^{1/2}}=\frac{q^{n/2}}{[\mathfrak{P}:\mathfrak{P} \cap ^{w_0}\mathfrak{P}]^{1/2}}$. Therefore

		\begin{align*}
		\phi_0\phi_1 &=(\lambda_0 \psi_0)(\lambda_1\psi_1)\\
		&=\lambda_0^2\psi_0\psi_1\\
		&= \frac{q^n \psi_0 \psi_1}{[\mathfrak{P}:\mathfrak{P} \cap ^{w_0}\mathfrak{P}]}.
		\end{align*}\par
		
		We have seen before that, $\mathfrak{P}= K_{-,1}\mathfrak{P}_{0}K_{+,0}$ and $\mathfrak{P} \cap ^{w_0}\mathfrak{P}=K_{-,1}\mathfrak{P}_{0}K_{+,1}$. So
		
		\begin{align*}
		[\mathfrak{P}:\mathfrak{P} \cap ^{w_0}\mathfrak{P}]&=|\frac{K_{+,0}}{K_{+,1}}|\\
		&=|\lbrace X \in \mathrm{M}_n(k_E), u \in \mathrm{M}_{n \times 1}(k_E) \mid X+ ^t{}\overline{X}+u{}^t \overline{u}=0 \rbrace|\\
		&=(q^{2n})(q)^{\frac{(n)(n-1)}{2}}\\
		&=(q^{2n})(q^{n^2-n})\\
		&=q^{n^2+n}.
		\end{align*}\par
		
		Hence
		\begin{align*}
		(\phi_0\phi_1)(\zeta )&=\frac{q^n (\psi_0 \psi_1)(\zeta)}{[\mathfrak{P}:\mathfrak{P} \cap ^{w_0}\mathfrak{P}]}\\ 
		&=\frac{q^n (\psi_0 \psi_1)(\zeta)}{ q^{n^2+n}}\\
		&=q^{-n^2}1_{W^{\vee}}.
		\end{align*}\par
		
		Recall $g_i = q^{-n/2}\phi_i$ for $i=0,1$. We know that $\phi_i^2= (q^n-1)\phi_i +q^n$ for $i=0,1$. So for $i=0,1$ we have
		
		\begin{align*}
		g_i^2 &= q^{-n}\phi_i^2\\
		&=q^{-n}((q^n-1)\phi_i +q^n)\\
		&=(1-q^{-n})\phi_i+1\\
		&=(1-q^{-n})q^{n/2}g_i+1\\
		&=(q^{n/2}- q^{-n/2})g_i+1.
		\end{align*}\par
		
		So $g_0g_1=(q^{-n/2}\phi_1)(q^{-n/2}\phi_2)= q^{-n}\phi_1\phi_2 \Longrightarrow (g_0g_1)(\zeta)= q^{-n}(\phi_1\phi_2)(\zeta)= q^{-n}q^{-n^2}1_{W^{\vee}}= q^{-n^2-n}1_{W^{\vee}}$. From the earlier discussion in this section we have $T_P(\alpha)(\zeta)=\delta_P^{1/2}(\zeta)1_{W^{\vee}}$. From section 9.1, we have $\delta_P(\zeta)= q^{-2n^2-2n}$. Hence $\delta_P^{1/2}(\zeta)= q^{-n^2-n}$. Therefore $(g_0g_1)(\zeta)= T_P(\alpha)(\zeta)$. So $(g_0g_1)(\zeta)= T_P(\alpha)(\zeta)$. We have $\text{supp}(T_P(\alpha))= \mathfrak{P}\zeta\mathfrak{P}$. As $\text{supp}(g_i)=\mathfrak{P}w_i\mathfrak{P}$, Lemma \ref{lem_10} gives $\text{supp}(g_0g_1)= \mathfrak{P}\zeta\mathfrak{P}$. Therefore $g_0g_1= T_P(\alpha)$.
	\end{proof}
	
	\subsubsection{Ramified case:}
	We know that $\mathcal{H}(G,\rho)=\langle \phi_0, \phi_1 \rangle$ where $\phi_0$ is supported on $\mathfrak{P}w_0\mathfrak{P}$ and $\phi_1$ is supported on $\mathfrak{P}w_1\mathfrak{P}$ respectively with $\phi_i^2= q^{n/2}+(q^{n/2}-1)\phi_i$ for $i=0,1$. In this section we show that $g_0*g_1= T_P(\alpha)$, where $g_i= q^{-n/4}\phi_i$ for $i=0,1$.
	
	\begin{proposition}\label{pro_38}
		$g_0g_1= T_P(\alpha)$.
	\end{proposition}
	\begin{proof}	
		Let us choose $\psi_i \in \mathcal{H}(G,\rho)$ for $i=0,1$ such that $\text{supp}(\psi_i)= \mathfrak{P}w_i\mathfrak{P}$ for $i=0,1$. So $\phi_i$ is a scalar multiple of $\psi_i$ for $i=0,1$. Hence $\phi_i = \lambda_i \psi_i$ where $\lambda_i \in \mathbb{C}^{\times}$ for $i=0,1$. Let $\psi_i(w_i)=A_i \in \mathrm{Hom}_{\mathfrak{P}\cap ^{w_i}\mathfrak{P}}(^{w_i}\rho^{\vee},\rho^{\vee})$ for $i=0,1$ and $W$ be the space of $\rho$. So $A_i^2= 1_{W^{\vee}}$ for $i=0,1$. From section 5.1 on page 24 in \cite{MR2276353}, we can say that $A_0= A_1$. From Proposition \ref{prop_35}, we have $(\psi_0*\psi_1)(\zeta)= \psi_0(w_0)\psi_1(w_1)= A_0A_1=A_0^2=1_{W^{\vee}}$. Now let $\psi_i$ satisfies the quadratic relation given by $\psi_i^2 = a_i\psi_i+b_i$ where $a_i,b_i \in \mathbb{R}$ for $i=0,1$. As $\psi_i^2 = a_i\psi_i+b_i \Longrightarrow (-\psi_i)^2 = (-a_i)(-\psi_i)+b_i$, so $a_i$ can be arranged such that $a_i>0$ for $i=0,1$. We can see that $1 \in {\mathcal{H}(G,\rho)}$ is defined as below:
		
		\[
		1(x)=
		\begin{cases}
		0,   &\text{if $x \notin \mathfrak{P}$;}\\
		\rho^{\vee}(x) &\text{if $x \in \mathfrak{P}$.}
		\end{cases}
		\]
		
		Let us consider $\psi_0^2(1) = \int_G \psi_0(y)\psi_0(y^{-1})dy$. Now let $y=pw_0p'$ where $p,p' \in \mathfrak{P}$. So we have
		
		\begin{align*}
		\psi_0^2(1) &= \int_{\mathfrak{P}w_0\mathfrak{P}}\psi_0(pw_0p')\psi_0(p'^{-1}w_0^{-1}p^{-1})d(pw_0p')\\
		&=\int_{\mathfrak{P}w_0\mathfrak{P}}\rho^{\vee}(p)\psi_0(w_0)\rho^{\vee}(p')\rho^{\vee}(p^{'-1})\psi_0(w_0^{-1})\rho^{\vee}(p^{-1})d(pw_0p')\\
		&=\int_{\mathfrak{P}w_0\mathfrak{P}}\rho^{\vee}(p)\psi_0(w_0)\psi_0(w_0^{-1})\rho^{\vee}(p^{-1})d(pw_0p')\\
		&=\int_{\mathfrak{P}w_0\mathfrak{P}}\rho^{\vee}(p)\psi_0(w_0)\psi_0(w_0)\rho^{\vee}(p^{-1})d(pw_0p')\\
		&=\int_{\mathfrak{P}w_0\mathfrak{P}}\rho^{\vee}(p)A_0^2\rho^{\vee}(p^{-1})d(pw_0p')\\
		&=\int_{\mathfrak{P}w_0\mathfrak{P}}A_0^2\rho^{\vee}(p)\rho^{\vee}(p^{-1})d(pw_0p')\\
		&=A_0^2\text{vol}(\mathfrak{P}w_0\mathfrak{P})\\
		&=1_{W^{\vee}}\text{vol}(\mathfrak{P}w_0\mathfrak{P}).
		\end{align*}\par
		
		So $\psi_0^2(1)=1_{W^{\vee}}\text{vol}(\mathfrak{P}w_0\mathfrak{P})$. We already know that 
		$\psi_0^2 = a_0\psi_0+b_0$ where $a_0,b_0 \in \mathbb{R}$. Now evaluating the expression $\psi_0^2 = a_0 \psi_0+b_0$ at 1, we have 
		$\psi_0^2(1) = a_0\psi_0(1)+b_0 1(1)$. We can see that $\psi_0(1)=0$ as support of $\psi_0$ is $\mathfrak{P}w_0\mathfrak{P}$. We have seen before that $\psi_0^2(1) = 1_{W^{\vee}}\text{vol}(\mathfrak{P}w_0\mathfrak{P})$ and as $1 \in \mathfrak{P}, 1(1)= \rho^{\vee}(1)= 1_{W^{\vee}}$. So $\psi_0^2(1) = a_0\psi_i(1)+b_0 1(1) \Longrightarrow 1_{W^{\vee}}\text{vol}(\mathfrak{P}w_0\mathfrak{P})=1_{W^{\vee}}b_0$. Comparing  coefficients of $1_{W^{\vee}}$ on both sides of the equation $1_{W^{\vee}}b_0=1_{W^{\vee}}\text{vol}(\mathfrak{P}w_0\mathfrak{P})$ we get
		
		\begin{center}
			$b_0=\text{vol}(\mathfrak{P}w_0\mathfrak{P})$.
		\end{center}\par
		
		As $\phi_0= \lambda_0\psi_0$, hence $\phi_0^2= \lambda_0^2\psi_0^2=\lambda_0^2(a_0\psi_0+b_0)= (\lambda_0 a_0)(\lambda_0 \psi_0)+ \lambda_0^2 b_0= (\lambda_0 a_0)\phi_0+  \lambda_0^2 b_0$. But $\phi_0^2= (q^{n/2}-1)\phi_0 +q^{n/2}$. So $\phi_0^2=(\lambda_0 a_0)\phi_0+  \lambda_0^2 b_0= (q^{n/2}-1)\phi_0 +q^{n/2}$. As $\phi_0$ and $1$ are linearly independent, hence $\lambda_0 a_0= (q^{n/2}-1)$. Therefore $\lambda_0= \frac{q^{n/2}-1}{a_0}$. As $a_0>0, a_0\in \mathbb{R}$, so $\lambda_0>0, \lambda_0 \in \mathbb{R}$. Similarly, as $\phi_0$ and $1$ are linearly independent, hence $\lambda_0^2 b= q^{n/2} \Longrightarrow \lambda_0^2= \frac{q^{n/2}}{b_0}$. \par 
		
		Now $\mathfrak{P}w_0\mathfrak{P}= \underset{x \in \mathfrak{P}/\mathfrak{P} \cap ^{w_0}\mathfrak{P}}{\amalg} xw_0\mathfrak{P} \Longrightarrow \text{vol}(\mathfrak{P}w_0\mathfrak{P})= [\mathfrak{P}w_0\mathfrak{P}:\mathfrak{P}]\text{vol}\mathfrak{P}=[\mathfrak{P}w_0\mathfrak{P}:\mathfrak{P}]=[\mathfrak{P}:\mathfrak{P} \cap ^{w_0}\mathfrak{P}]$. Hence $b_0=\text{vol}(\mathfrak{P}w_0\mathfrak{P})=[\mathfrak{P}:\mathfrak{P} \cap ^{w_0}\mathfrak{P}]$. Now as $\lambda_0^2= \frac{q^{n/2}}{b_0} \Longrightarrow \lambda_0= \frac{q^{n/4}}{b_0^{1/2}}=\frac{q^{n/4}}{[\mathfrak{P}:\mathfrak{P} \cap ^{w_0}\mathfrak{P}]^{1/2}}$.\par
		
		We have seen before that, $\mathfrak{P}= K_{-,1}\mathfrak{P}_{0}K_{+,0}$ and $\mathfrak{P} \cap ^{w_0}\mathfrak{P}=K_{-,1}\mathfrak{P}_{0}K_{+,1}$. So
		
		\begin{align*}
		[\mathfrak{P}:\mathfrak{P} \cap ^{w_0}\mathfrak{P}]&=|\frac{K_{+,0}}{K_{+,1}}|\\
		&=|\lbrace X \in \mathrm{M}_n(k_E), u \in \mathrm{M}_{n \times 1}(k_E) \mid X+ ^t{}\overline{X}+u{}^t \overline{u}=0 \rbrace|\\
		&=(q^n)(q^{\frac{(n)(n-1)}{2}})\\
		&=q^{\frac{n^2+n}{2}}.
		\end{align*}\par

		So
		\begin{center}
			$\lambda_0=\frac{q^{n/4}}{[\mathfrak{P}:\mathfrak{P} \cap ^{w_0}\mathfrak{P}]^{1/2}}=\frac{q^{n/4}}{q^{\frac{n^2+n}{4}}}.$
		\end{center}\par
		
		Let us consider $\psi_1^2(1) = \int_G \psi_1(y)\psi_1(y^{-1})dy$. Now let $y=pw_1p'$ where $p,p' \in \mathfrak{P}$. So we have
		
		\begin{align*}
		\psi_1^2(1) &= \int_{\mathfrak{P}w_1\mathfrak{P}}\psi_1(pw_1p')\psi_1(p^{'-1}w_1^{-1}p^{-1})d(pw_1p')\\
		&=\int_{\mathfrak{P}w_1\mathfrak{P}}\rho^{\vee}(p)\psi_1(w_1)\rho^{\vee}(p')\rho^{\vee}(p^{'-1})\psi_1(w_1^{-1})\rho^{\vee}(p^{-1})d(pw_1p')\\
		&=\int_{\mathfrak{P}w_1\mathfrak{P}}\rho^{\vee}(p)\psi_1(w_1)\psi_1(w_1^{-1})\rho^{\vee}(p^{-1})d(pw_1p')\\
		&=\int_{\mathfrak{P}w_1\mathfrak{P}}\rho^{\vee}(p)\psi_1(w_1)\psi_1(-w_1)\rho^{\vee}(p^{-1})d(pw_1p')\\
		&=\int_{\mathfrak{P}w_1\mathfrak{P}}\rho^{\vee}(p)\psi_1(w_1)\rho^{\vee}(-1)\psi_1(w_1)\rho^{\vee}(p^{-1})d(pw_1p')\\
		&=\rho^{\vee}(-1)\int_{\mathfrak{P}w_1\mathfrak{P}}A_1^2\rho^{\vee}(p)\rho^{\vee}(p^{-1})d(pw_1p')\\
		&=\rho^{\vee}(-1)A_1^2\text{vol}(\mathfrak{P}w_1\mathfrak{P})\\
		&=\rho^{\vee}(-1)1_{W^{\vee}}\text{vol}(\mathfrak{P}w_1\mathfrak{P}).
		\end{align*}\par
		
		So $\psi_1^2(1)=1_{W^{\vee}}\text{vol}(\mathfrak{P}w_1\mathfrak{P})$. We already know that 
		$\psi_1^2 = a_1\psi_1+b_1$ where $a_1,b_1 \in \mathbb{R}$. Now evaluating the expression $\psi_1^2 = a_1 \psi_1+b_1$ at 1, we have 
		$\psi_1^2(1) = a_1\psi_1(1)+b_1 1(1)$. We can see that $\psi_1(1)=0$ as support of $\psi_1$ is $\mathfrak{P}w_1\mathfrak{P}$. We have seen before that $\psi_1^2(1) = 1_{W^{\vee}}\text{vol}(\mathfrak{P}w_1\mathfrak{P})$ and as $1 \in \mathfrak{P}, 1(1)= \rho^{\vee}(1)= 1_{W^{\vee}}$. So $\psi_1^2(1) = a_1\psi_i(1)+b_1 1(1) \Longrightarrow \rho^{\vee}(-1)1_{W^{\vee}}\text{vol}(\mathfrak{P}w_1\mathfrak{P})=1_{W^{\vee}}b_1$. Comparing coefficients of $1_{W^{\vee}}$ on both sides of the equation $1_{W^{\vee}}b_1=1_{W^{\vee}}\rho^{\vee}(-1)\text{vol}(\mathfrak{P}w_1\mathfrak{P})$ we get
		
		\begin{center}
			$b_1=\rho^{\vee}(-1)\text{vol}(\mathfrak{P}w_1\mathfrak{P})$.\par 
		\end{center}\par
		
		As $\phi_1= \lambda_1\psi_1$, hence $\phi_1^2= \lambda_1^2\psi_1^2=\lambda_1^2(a_1\psi_1+b_1)= (\lambda_1 a_1)(\lambda_1 \psi_1)+ \lambda_1^2 b_1 = (\lambda_0 a_1)\phi_1+  \lambda_1^2 b_1$. But $\phi_1^2= (q^{n/2}-1)\phi_1 +q^{n/2}$. So $\phi_1^2=(\lambda_1 a_1)\phi_1+  \lambda_1^2 b_1= (q^{n/2}-1)\phi_1 +q^{n/2}$. As $\phi_1$ and $1$ are linearly independent, hence $\lambda_1 a_1= (q^{n/2}-1)$. Therefore $\lambda_1= \frac{q^{n/2}-1}{a_1}$. As $a_1>0, a_1\in \mathbb{R}$, so $\lambda_1>0, \lambda_1 \in \mathbb{R}$. Similarly, as $\phi_1$ and $1$ are linearly independent, hence $\lambda_1^2 b= q^{n/2} \Longrightarrow \lambda_1^2= \frac{q^{n/2}}{b_1}$. \par 
		
		Now $\mathfrak{P}w_1\mathfrak{P}= \underset{x \in \mathfrak{P}/\mathfrak{P} \cap ^{w_1}\mathfrak{P}}{\amalg} xw_1\mathfrak{P} \Longrightarrow \text{vol}(\mathfrak{P}w_1\mathfrak{P})= [\mathfrak{P}w_1\mathfrak{P}:\mathfrak{P}]\text{vol}\mathfrak{P}=[\mathfrak{P}w_1\mathfrak{P}:\mathfrak{P}]=[\mathfrak{P}:\mathfrak{P} \cap ^{w_1}\mathfrak{P}]$. Hence $b_1=\text{vol}(\mathfrak{P}w_1\mathfrak{P})=[\mathfrak{P}:\mathfrak{P} \cap ^{w_1}\mathfrak{P}]$. Now as $\lambda_1^2= \frac{q^{n/2}}{b_1} \Longrightarrow \lambda_1= \frac{q^{n/4}}{b_1^{1/2}}=\frac{q^{n/4}}{[\mathfrak{P}:\mathfrak{P} \cap ^{w_1}\mathfrak{P}]^{1/2}}$.\par
		
		We have seen before that $\mathfrak{P}= K_{-,1} \mathfrak{P}_0 K_{+,0}, ^{w_1}\mathfrak{P}= K_{-,2} \mathfrak{P}_0 K_{+,-1}$. So $\mathfrak{P} \cap ^{w_1}\mathfrak{P}= K_{-,2} \mathfrak{P}_0K_{+,0}$. Hence
		
		\begin{align*}
		[\mathfrak{P}:\mathfrak{P} \cap ^{w_1}\mathfrak{P}]&=|\frac{K_{-,1}}{K_{-,2}}|\\
		&=|\lbrace X \in \mathrm{M}_n(k_E), u \in \mathrm{M}_{n \times 1}(k_E) \mid X- ^t{}\overline{X}-u{}^t \overline{u}=0 \rbrace|\\
		&=q^{\frac{n^2+n}{2}}.
		\end{align*}\par
		
		So
		\begin{center}
			$\lambda_1=\frac{q^{n/4}}{[\mathfrak{P}:\mathfrak{P} \cap ^{w_1}\mathfrak{P}]^{1/2}}=\frac{q^{n/4}}{q^{\frac{n^2+n}{4}}(\rho(-1))^{1/2}}.$
		\end{center}\par
		
		Hence
		\begin{align*}
		(\phi_0\phi_1)(\zeta)&=(\lambda_0\psi_0)(\lambda_1\psi_1)(\zeta)\\
		&=(\lambda_0\lambda_1)(\psi_0\psi_1)(\zeta)\\
		&=\frac{q^{n/4}}{q^{\frac{n^2+n}{4}}}\frac{q^{n/4}}{q^{\frac{n^2+n}{4}}(\rho(-1))^{1/2}}1_W^{\vee}\\
		&=\frac{q^\frac{-n^2}{2}1_W^{\vee}}{(\rho(-1))^{1/2}}.
		\end{align*}\par
		
		As $-1 \in Z(\mathfrak{P})$ and $\rho^{\vee}$ is a representation of $\mathfrak{P}$, so $\rho^{\vee}(-1)= \omega_{\rho^{\vee}}(-1)$ where $\omega_{\rho^{\vee}}$ is the central character of $\mathfrak{P}$. Now $1=\omega_{\rho^{\vee}}(1)=(\omega_{\rho^{\vee}}(-1))^2$, so 
		$\rho^{\vee}(-1)= \omega_{\rho^{\vee}}(-1)= \pm 1$. We have seen before that $\lambda_1= \frac{q^{n/2}-1}{a_1}$ and $a_1 \in \mathbb{R}, a_1>0$, so $\lambda_1 >0$. But we know that $\lambda_1=\frac{q^{n/4}}{[\mathfrak{P}:\mathfrak{P} \cap ^{w_1}\mathfrak{P}]^{1/2}}=\frac{q^{n/4}}{q^{\frac{n^2+n}{4}}(\rho(-1))^{1/2}}$, hence $\rho^{\vee}(-1)=1$.\par
		
		Recall $g_i = q^{-n/4}\phi_i$ for $i=0,1$. We know that $\phi_i^2= (q^{n/2}-1)\phi_i +q^{n/2}$ for $i=0,1$. So for $i=0,1$ we have
		
		\begin{align*}
		g_i^2 &= q^{-n/2}\phi_i^2\\
		&=q^{-n/2}((q^{n/2}-1)\phi_i +q^{n/2})\\
		&=(1-q^{-n/2})\phi_i+1\\
		&=(1-q^{-n/2})q^{n/4}g_i+1\\
		&=(q^{n/4}- q^{-n/4})g_i+1.
		\end{align*}\par
		
		So $g_0g_1= q^{-n/2}\phi_1\phi_2 \Longrightarrow (g_0g_1)(\zeta)=q^{-n/2}(\phi_0\phi_1)(\zeta)= q^{-n/2}\frac{q^\frac{-n^2}{2}1_W^{\vee}}{(\rho(-1))^{1/2}}=q^\frac{-n^2-n}{2}1_W^{\vee}$. From the earlier discussion in this section we have $T_P(\alpha)(\zeta)=\delta_P^{1/2}(\zeta)1_{W^{\vee}}$. From section 9.1, we have $\delta_P(\zeta)=  q^{-n^2-n}$. Hence $\delta_P^{1/2}(\zeta)= q^{\frac{-n^2-n}{2}}$. Therefore $(g_0g_1)(\zeta)= T_P(\alpha)(\zeta)$. So $(g_0g_1)(\zeta)= T_P(\alpha)(\zeta)$. We have $\text{supp}(T_P(\alpha))= \mathfrak{P}\zeta\mathfrak{P}$. As $\text{supp}(g_i)=\mathfrak{P}w_i\mathfrak{P}$, Lemma \ref{lem_10}  gives $\text{supp}(g_0g_1)= \mathfrak{P}\zeta\mathfrak{P}$. Therefore $g_0g_1= T_P(\alpha)$.
	\end{proof}
			
\subsection{Calculation of $m_L(\pi\nu)$}
			
Recall that $\pi= \lambda \chi$ where $\lambda$ is an irreducible supercuspidal depth zero representation of $\mathrm{GL}_n(E)$ and $\chi$ is a supercuspidal depthzero character of $\mathrm{U}_1(E)$. Note that $\pi\nu$ lies in $\mathfrak{R}^{[L,\pi]_L}(L)$. Recall $m_L$ is an equivalence of categories. As $\pi\nu$ is an irreducible representation of $L$, it follows that $m_L(\pi\nu)$ is a simple $\mathcal{H}(L,\rho_0)$-module. In this section, we identify the simple $\mathcal{H}(L,\rho_0)$-module corresponding to $m_L(\pi\nu)$. Calculating $m_L(\pi\nu)$ will be useful in answering the question in next section.\par

From section 2.5, we know that $\pi =c$-$Ind_{\widetilde{\mathfrak{P}_0}}^L \widetilde{\rho_0}$, where $\widetilde{\mathfrak{P}_0}=\langle \zeta \rangle \mathfrak{P}_0, \widetilde{\rho_0}(\zeta^k j)=\rho_0(j)$ for $j \in \mathfrak{P}_0, k \in \mathbb{Z}$. Let us recall that $\nu$ is unramified character of $L$ from section 1. Let $V$ be space of $\pi \nu$ and $W$ be space of $\rho_0$. Recall $m_L(\pi\nu)= \mathrm{Hom}_{\mathfrak{P}_0}(\rho_0,\pi \nu)$. Let $f \in \mathrm{Hom}_{\mathfrak{P}_0}(\rho_0,\pi \nu)$. As $\mathfrak{P}_0$ is a compact open subgroup of $L$ and $\nu$ is an unramified character of $L$, so $\nu(j)=1$ for $j \in \mathfrak{P}_0$. We already know that $\alpha \in \mathcal{H}(L,\rho_0)$ with support of $\alpha$ being $\mathfrak{P}_0\zeta$ and $\alpha(\zeta)= 1_{W^{\vee}}$. Let $w \in W$ and we have seen in section 2.4 that the way  $\mathcal{H}(L,\rho_0)$ acts on $\mathrm{Hom}_{\mathfrak{P}_0}(\rho_0,\pi \nu)$ is given by:

\begin{align*}
(\alpha.f)(w)&= \int_L(\pi \nu)(l)f(\alpha^{\vee}(l^{-1})w)dl\\
&=\int_L(\pi \nu)(l)f((\alpha(l))^{\vee}w)dl\\
&=\int_{\mathfrak{P}_0}(\pi \nu)(p\zeta)f((\alpha(p\zeta))^{\vee}w)dp\\
&=\int_{\mathfrak{P}_0}(\pi \nu)(p\zeta)f((\rho_0^{\vee}(p)\alpha(\zeta))^{\vee}w)dp\\
&=\int_{\mathfrak{P}_0}(\pi \nu)(p\zeta)f((\rho_0^{\vee}(p)1_{W^{\vee}})^{\vee}w)dp\\
&=\int_{\mathfrak{P}_0}(\pi \nu)(p\zeta)f((\rho_0^{\vee}(p))^{\vee}w)dp\\
&=\int_{\mathfrak{P}_0}\pi(p\zeta) \nu(p\zeta)f((\rho_0^{\vee}(p))^{\vee}w)dp\\
&=\int_{\mathfrak{P}_0}\pi(p\zeta) \nu(\zeta)f((\rho_0^{\vee}(p))^{\vee}w)dp.\\
\end{align*}\par

Now $\langle, \rangle \colon W \times W^{\vee} \longrightarrow \mathbb{C}$ is given by: $\langle w, \rho_0^{\vee}(p)w^{\vee}\rangle = \langle \rho_0(p^{-1})w, w^{\vee} \rangle$ for $p \in \mathfrak{P}_0, w \in W$. So we have $(\rho_0^{\vee}(p))^{\vee}=\rho_0(p^{-1})$ for $p \in \mathfrak{P}_0$. Hence

\begin{align*}
(\alpha.f)(w)&=\int_{\mathfrak{P}_0}\pi(p\zeta) \nu(\zeta)f(\rho_0(p^{-1})w) dp.
\end{align*}\par

As $f \in \mathrm{Hom}_{\mathfrak{P}_0}(\rho_0,\pi \nu)$, so $(\pi \nu)(p)f(w) = f(\rho_0(p)w)$ for $p \in \mathfrak{P}_0, w \in W$. Hence

\begin{align*}
(\alpha.f)(w)&=\nu(\zeta)\int_{\mathfrak{P}_0}\pi(p\zeta)(\pi \nu)(p^{-1})f(w)dp\\
&=\nu(\zeta)\int_{\mathfrak{P}_0}\pi(p\zeta)\pi(p^{-1})\nu(p^{-1})f(w)dp\\
&=\nu(\zeta)\int_{\mathfrak{P}_0}\pi(p\zeta)\pi(p^{-1})f(w)dp.
\end{align*}\par

Now as $\pi = c$-$Ind_{\widetilde{\mathfrak{P}_0}}^L \widetilde\rho_0$ and $\widetilde{\mathfrak{P}_0}=\langle \zeta \rangle \mathfrak{P}_0, \widetilde{\rho_0}(\zeta^k j)=\rho_0(j) $ for $j \in \mathfrak{P}_0, k \in \mathbb{Z}$, so $\pi(p\zeta)=\pi(p)\widetilde{\rho_0}(\zeta)=\pi(p)\rho_0(1)=\pi(p)1_{W^{\vee}}$. Therefore

\begin{align*}
(\alpha.f)(w)&=\nu(\zeta)\int_{\mathfrak{P}_0}\pi(p)\pi(p^{-1})f(w)dp\\
&=\nu(\zeta)f(w)\text{Vol}(\mathfrak{P}_0)\\
&=\nu(\zeta)f(w)
\end{align*}

So $(\alpha.f)(w)=\nu(\zeta)f(w)$ for $w \in W$. So $\alpha$ acts on $f$ by multiplication by $\nu(\zeta)$. Recall for $\lambda \in \mathbb{C}^{\times}$, we write $\mathbb{C}_{\lambda}$ for the $\mathcal{H}(L,\rho_0)$-module with underlying abelian group $\mathbb{C}$ such that $\alpha.z=\lambda z$ for $z \in \mathbb{C}_{\lambda}$. Therefore $m_L(\pi \nu) \cong \mathbb{C}_{\nu(\zeta)}$.\par

\section{Answering the question}
			
			Recall the following commutative diagram which we have described earlier.
			
			\[
			\begin{CD}
			\mathfrak{R}^{[L,\pi]_G}(G)    @>m_G>>    \mathcal{H}(G,\rho)-Mod\\
			@A\iota_P^GAA                                    @A(T_P)_*AA\\
			\mathfrak{R}^{[L,\pi]_L}(L)    @>m_L>>     \mathcal{H}(L,\rho_0)-Mod
			\end{CD}
			\tag{CD}\]\par
			
			Recall that in the unramified case when $n$ is even or in the ramified case when $n$ is odd we have  $N_G(\rho_0)= Z(L) \mathfrak{P}_0$. Thus $\mathfrak{I}_G(\rho)= \mathfrak{P}(Z(L)\mathfrak{P}_0)\mathfrak{P}= \mathfrak{P}Z(L)\mathfrak{P}$.\par
			
			From Corollary 6.5 in \cite{MR1486141} it follows that if $\mathfrak{I}_G(\rho) \subseteq \mathfrak{P}L\mathfrak{P}$ then \[T_P \colon \mathcal{H}(L,\rho_0) \longrightarrow \mathcal{H}(G,\rho)\] is an isomorphism of $\mathbb{C}$-algebras. As we have $\mathfrak{I}_G(\rho)=\mathfrak{P}Z(L) \mathfrak{P}$ in the unramified case when $n$ is even or in the ramified case when $n$ is odd, so $\mathcal{H}(L,\rho_0) \cong \mathcal{H}(G,\rho)$ as $\mathbb{C}$-algebras. So from the commutative diagram (CD), we can conclude that $\iota_P^G(\pi \nu)$ is irreducible for any unramified character $\nu$ of $L$.
			
        	Recall that $\pi\nu$ lies in $\mathfrak{R}^{[L,\pi]_L}(L)$. Note that from the above commutative diagram, it follows that $\iota_P^G(\pi\nu)$ lies in $\mathfrak{R}^{[L,\pi]_G}(G)$ and $m_G(\iota_P^G(\pi\nu))$ is an $\mathcal{H}(G,\rho)$-module. Recall $m_L(\pi \nu) \cong \mathbb{C}_{\nu(\zeta)}$ as $\mathcal{H}(L,\rho_0)$-modules. From the commutative diagram (CD), we have $m_G(\iota_P^G(\pi\nu)) \cong (T_P)_*(\mathbb{C}_{\nu(\zeta)})$ as $\mathcal{H}(G,\rho)$-modules. Thus to determine the unramified characters $\nu$ for which $\iota_P^G(\pi\nu)$ is irreducible, we have to understand when $(T_P)_*(\mathbb{C}_{\nu(\zeta)})$ is a simple $\mathcal{H}(G,\rho)$-module.\par

			Using notation on page 438 in \cite{MR2531913}, we have  $\gamma_1=\gamma_2= q^{n/2}$ for unramified case when $n$ is odd and $\gamma_1=\gamma_2= q^{n/4}$ for ramified case when $n$ is even. As in Proposition 1.6 of \cite{MR2531913}, let $\Gamma=\{\gamma_1\gamma_2,-\gamma_1\gamma_2^{-1}, -\gamma_1^{-1}\gamma_2,(\gamma_1\gamma_2)^{-1}\}$. So by Proposition 1.6 in \cite{MR2531913}, 
			$(T_P)_*(\mathbb{C}_{\nu(\zeta)})$ is a simple $\mathcal{H}(G,\rho)$-module $\Longleftrightarrow \nu(\zeta) \notin \Gamma$. Recall $\pi= c$-$Ind_{Z(L)\mathfrak{P}_0}^L \widetilde{\rho_0}$ where $\widetilde{\rho_0}(\zeta^k j)=\rho_0(j)$ for $j \in \mathfrak{P}_0, k \in \mathbb{Z}$ and  $\rho_0=\tau_{\theta}$ for some regular character $\theta$ of $l^{\times}$ with $[l:k_E]=n$. Hence we can conclude that $\iota_P^G(\pi \nu)$ is irreducible for the unramified case when $n$ is odd $\Longleftrightarrow \nu(\zeta) \notin \{q^n,q^{-n},-1\}$, $\theta^{q^{n+1}}= \theta^{-q}$ and  $\iota_P^G(\pi \nu)$ is irreducible for the ramified case when $n$ is even $\Longleftrightarrow \nu(\zeta) \notin \{q^{n/2},q^{-n/2},-1\}$, $\theta^{q^{n/2}}= \theta^{-1}$. That proves Theorem \ref{the_1}.

\nocite{MR1235019}
\nocite{MR1486141}
\nocite{MR1643417}
\nocite{MR2276353}
\nocite{MR2531913}
\nocite{carter_1992}
\nocite{Thomas2014}
\nocite{green}
\nocite{MR1266747}
\nocite{digne_francois_michel_1991}
\nocite{sandeep}
\nocite{heiermann_2011}
\nocite{heiermann_2017}

\end{document}